\pgfplotsset{compat=1.9}
\renewcommand{\tocsection}[3]{
  \indentlabel{\@ifnotempty{#2}{\bfseries\ignorespaces#1 #2\quad}}\bfseries#3}
\renewcommand{\tocsubsection}[3]{
  \indentlabel{\@ifnotempty{#2}{\ignorespaces#1 #2\quad}}#3}
\newcommand\@dotsep{4.5}
\def\@tocline#1#2#3#4#5#6#7{\relax
  \ifnum #1>\c@tocdepth 
  \else
    \par \addpenalty\@secpenalty\addvspace{#2}%
    \begingroup \hyphenpenalty\@M
    \@ifempty{#4}{%
      \@tempdima\csname r@tocindent\number#1\endcsname\relax
    }{%
      \@tempdima#4\relax
    }%
    \parindent\z@ \leftskip#3\relax \advance\leftskip\@tempdima\relax
    \rightskip\@pnumwidth plus1em \parfillskip-\@pnumwidth
    #5\leavevmode\hskip-\@tempdima{#6}\nobreak
    \leaders\hbox{$\m@th\mkern \@dotsep mu\hbox{.}\mkern \@dotsep mu$}\hfill
    \nobreak
    \hbox to\@pnumwidth{\@tocpagenum{\ifnum#1=1\bfseries\fi#7}}\par
    \nobreak
    \endgroup
  \fi}
\renewcommand\csname r@tocindent0\endcsname{0pt}
\def\l@subsection{\@tocline{2}{0pt}{0.9cm}{5pc}{}}
\m@th\displaystyle{##}$\hfil}
\m@th\displaystyle{##}$\hfil}
\m@th\displaystyle{##}$\hfil}
\newcommand{\N}{{\mathbb N}}
\newcommand{\Z}{{\mathbb Z}}
\newcommand{\Q}{{\mathbb Q}}
\newcommand{\C}{{\mathbb C}}
\DeclareMathAlphabet{\pazocal}{OMS}{zplm}{m}{n}
\newcommand{\calA}{{\pazocal A}}
\newcommand{\calB}{{\pazocal B}}
\newcommand{\calD}{{\pazocal D}}
\newcommand{\calK}{{\pazocal K}}
\newcommand{\calL}{{\pazocal L}}
\newcommand{\calN}{{\pazocal N}}
\newcommand{\calQ}{{\pazocal Q}}
\newcommand{\calR}{{\pazocal R}}
\newcommand{\calU}{{\pazocal U}}
\newcommand{\calZ}{{\pazocal Z}}
\newcommand{\ra}{\rightarrow}
\newcommand{\ol}{\overline}
\newcommand{\floor}[1]{\lfloor #1\rfloor}
\numberwithin{equation}{section}
\theoremstyle{plain}
\newtheorem{theorem}{Theorem}[section]
\newtheorem*{theorem*}{Theorem}
\newtheorem{lemma}[theorem]{Lemma}
\newtheorem{proposition}[theorem]{Proposition}
\newtheorem*{proposition*}{Proposition}
\newtheorem{corollary}[theorem]{Corollary}
\theoremstyle{definition}
\newtheorem{definition}[theorem]{Definition}
\newtheorem{remark}[theorem]{Remark}
\newtheorem*{remark*}{Remark}
\newtheorem{remarks}[theorem]{Remarks}
\newtheorem*{remarks*}{Remarks}
\newtheorem*{assumption*}{Assumption}
\newtheorem{notation}[theorem]{Notation}
\newtheorem{claim}[theorem]{Claim}
\newtheorem*{claim*}{Claim}
\tikzset{
  on each segment/.style={
    decorate,
    decoration={
      show path construction,
      moveto code={},
      lineto code={
        \path [#1]
        (\tikzinputsegmentfirst) -- (\tikzinputsegmentlast);
      },
      curveto code={
        \path [#1] (\tikzinputsegmentfirst)
        .. controls
        (\tikzinputsegmentsupporta) and (\tikzinputsegmentsupportb)
        ..
        (\tikzinputsegmentlast);
      },
      closepath code={
        \path [#1]
        (\tikzinputsegmentfirst) -- (\tikzinputsegmentlast);
      },
    },
  },
  mid arrow/.style={postaction={decorate,decoration={
        markings,
        mark=at position .5 with {\arrow[#1]{stealth}}
      }}},
}
\tikzset{
	cross/.style={cross out, draw=black, minimum size=2*(#1-\pgflinewidth), inner sep=0pt, outer sep=0pt},
	cross/.default={1pt}
}
\tikzset{
  symbol/.style={
    draw=none,
    every to/.append style={
      edge node={node [sloped, allow upside down, auto=false]{$#1$}}}
  }
}
\title[units group rings]{On group rings of virtually abelian groups}
\author{Joan Claramunt}
\address[J. Claramunt]{Fakultät für Mathematik und Informatik, Universität Leipzig, Augustusplatz 10, 04109 Leipzig, Germany.}
\email{Joan.Claramunt@math.uni-leipzig.de}
\author{Łukasz Grabowski}
\address[Ł. Grabowski]{Fakultät für Mathematik und Informatik, Universität Leipzig, Augustusplatz 10, 04109 Leipzig, Germany.}
\email{Lukasz.Grabowski@math.uni-leipzig.de}
\subjclass[2010]{Primary 16S34, 20C07; Secondary 16E50}
\keywords{}
\thanks{Both authors were supported by the ERC Starting Grant ``Limits of Structures in Algebra and Combinatorics'' No. 805495.}
\date{\today}
\begin{document}

\pagestyle{plain}
 
\begin{abstract}
Let $\Gamma$ be a finitely generated torsion-free group. We show that the statement of $\Gamma$ being virtually abelian is equivalent to the statement that the $*$-regular closure of the group ring $\C[\Gamma]$ in the algebra of (unbounded) operators affiliated to the group von Neumann algebra is a central division algebra. More generally, for any field $k$, it is shown that $k[\Gamma]$ embeds into a central division algebra in case $\Gamma$ is virtually abelian. We take advantage of this result in order to develop a criterion for existence of units in the group ring $k[\Gamma]$. We develop this criterion in the particular case of $\Gamma$ being the Promislow's group.
\end{abstract}

\maketitle

\tableofcontents

\normalsize

\section{Introduction}\label{section-introduction}

Recently Gardam \cite{Gardam} presented a counterexample to one of the famous conjectures on group rings attributed to Kaplansky: the unit conjecture. This conjecture asserts that the only units in the group ring of a torsion-free group $\Gamma$, with coefficients in a field $k$, are the trivial ones, namely the ones of the form $\lambda g$ with $\lambda \in k \backslash \{0\}, g \in \Gamma$. Gardam disproved this conjecture by explicitly exhibiting a group $G$, a field $k$ and the non-trivial unit. His field is the field $k = \mathbb{F}_2$ of order $2$, and the group $G$ is the Promislow's group, the unique torsion-free $3$-dimensional crystallographic group with finite abelianization. Later on, Murray \cite{Murray} disproved the conjecture by using the same grup but over fields of arbitrary positive characteristic $\text{char}(k) > 0$ following the same lines as in \cite{Gardam}. 

The unit conjecture is still open when the coefficient field is a field of characteristic zero. While we develop some interesting new criteria for the existence of units in this case, we have not been able to achieve our initial goal of settling this conjecture, even in the case of the Promislow's group.

The initial point of our investigation is the following observation. Let $\calR(\Gamma)$  be the $*$-regular closure of the group ring $\C[\Gamma]$ inside the algebra $\calU(\Gamma)$ of (possibly unbounded) operators affiliated to the group von Neumann algebra $\calN(\Gamma)$. When $\Gamma$ is torsion-free we may equivalently define $\calR(\Gamma)$ to be the Ore localization of $\C[\Gamma]$. Then $\calR(\Gamma)$ is a central division algebra, i.e. it is a division algebra which is finite-dimensional over its center. With this in mind, the problem of finding units which lie specifically in $\C[\Gamma]$ has a flavor which is very similar to the problem of determining the units in the ring of integers of a finite field extension of the rational numbers.

In the case of the Promislov's group, the division algebra $\calR(\Gamma)$ is $16$-dimensional over its center, and the center is a $2$-dimensional extension of the field of rational functions in $3$ variables. First let us note that, irrespectively of the unit conjecture, it is interesting to ask which groups $\Gamma$ have the property that $\calR(\Gamma)$ is a central division algebra. The following theorem answers this question. 

\begin{theorem*}[Theorem \ref{theorem-characterization.virtually.abelian}]\label{theorem-main.theorem.1}
If $\Gamma$ is a finitely generated torsion-free group, then $\Gamma$ is virtually abelian if and only if the $*$-regular closure $\calR(\Gamma) := \calR(\C[\Gamma],\calU(\Gamma))$ of $\C[\Gamma]$ in $\calU(\Gamma)$ is a  central division algebra.
\end{theorem*}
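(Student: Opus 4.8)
I would prove the two implications by quite different means, and I would keep them separate: the forward direction rests on an explicit description of $\calR(\Gamma)$ as a finite crossed product, while the converse runs entirely through polynomial-identity theory. For the forward direction, assume $\Gamma$ is finitely generated, torsion-free and virtually abelian. First I would fix a convenient model: such a group contains a finite-index free abelian subgroup (finitely generated and torsion-free, hence isomorphic to $\Z^n$), and replacing it by the intersection of its finitely many conjugates I may assume $A\cong\Z^n$ is normal, giving an extension $1\to A\to\Gamma\to Q\to 1$ with $Q$ finite. Since $\C[A]=\C[t_1^{\pm1},\ldots,t_n^{\pm1}]$ is a commutative domain, its $*$-regular closure inside $\calU(A)\subseteq\calU(\Gamma)$ is its field of fractions $F:=\C(t_1,\ldots,t_n)$, and normality of $A$ exhibits $\C[\Gamma]$ as the crossed product $\bigoplus_{q\in Q}\C[A]\,\bar q$, with $Q$ acting on $F$ by $\C$-automorphisms.

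Next I would identify $\calR(\Gamma)$ with the finite crossed product $F*Q:=\bigoplus_{q\in Q}F\,\bar q$. By Linnell's theorem the strong Atiyah conjecture holds for torsion-free elementary amenable groups, so $\calR(\Gamma)$ is a division ring; containing $\C[\Gamma]$, it therefore contains the inverses of the nonzero elements of $\C[A]$, hence $F$, hence all of $F*Q$. Conversely, $F*Q$ is a domain (a subring of the division ring $\calR(\Gamma)$) that is free of rank $|Q|$ as a one-sided $F$-vector space, so left, respectively right, multiplication by a nonzero element is an injective endomorphism of this finite-dimensional space and hence bijective; thus every nonzero element of $F*Q$ has a two-sided inverse and $F*Q$ is itself a division ring, in particular $*$-regular, so minimality of the $*$-regular closure forces $\calR(\Gamma)=F*Q$. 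It remains to bound the dimension over the center. The fixed field $K:=F^{Q}$ is central in $F*Q$, the extension $F/K$ is finite of degree dividing $|Q|$, so $\dim_K(F*Q)=[F:K]\cdot|Q|\le|Q|^2<\infty$; since $K\subseteq Z(\calR(\Gamma))$, I get $\dim_{Z(\calR(\Gamma))}\calR(\Gamma)\le\dim_K\calR(\Gamma)<\infty$, and $\calR(\Gamma)$ is a central division algebra.

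\emph{Central division algebra $\Rightarrow$ virtually abelian.} Here I would argue through polynomial identities. If $\calR(\Gamma)$ is a division algebra that is finite-dimensional over its center, then, being finite-dimensional over a field, it satisfies a polynomial identity (for instance a standard identity $s_m$ with $m$ exceeding the dimension). Since a subring of a PI-ring is again a PI-ring, the group ring $\C[\Gamma]\subseteq\calR(\Gamma)$ satisfies a polynomial identity. The Isaacs--Passman characterization of PI group algebras now applies: over a field of characteristic zero, $k[\Gamma]$ satisfies a polynomial identity if and only if $\Gamma$ has an abelian subgroup of finite index. Hence $\Gamma$ is virtually abelian, closing the equivalence.

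The step I expect to be the main obstacle is the structural identification $\calR(\Gamma)=F*Q$ in the forward direction, and more precisely making rigorous that the finite crossed product already exhausts the $*$-regular closure. This rests on two nontrivial inputs: the computation $\calR(\Z^n)=\C(t_1,\ldots,t_n)$ together with the controlled behaviour of the $*$-regular closure under the normal abelian extension, and Linnell's theorem supplying the division-ring property; the domain and finite-centrality arguments are then formal. The converse, by contrast, is short once one is willing to invoke the (deep) Isaacs--Passman theorem, which is doing essentially all of the work there.
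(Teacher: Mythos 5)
Your argument is correct in both directions, but only the converse departs from the paper. The forward direction is essentially the paper's own proof: the paper likewise writes $\calR(\Gamma)$ as a crossed product of the rational function field with the finite quotient and computes the center as a Galois fixed field (Propositions \ref{proposition-computation.closures} and \ref{proposition-galois}); the only real difference is that the paper obtains the division-ring property by realizing $\calR(\Gamma)$ as the Ore localization of $\C[\Gamma]$ (Linnell's zero-divisor theorem plus Noetherianity), whereas you get it from Linnell's strong Atiyah theorem. Two details you should spell out there: the directness of the sum $\bigoplus_{q} F\bar q$ (the paper proves it by clearing denominators back into $\C[\Gamma]$), and the $*$-closedness of both the division closure and $F\ast Q$ --- strong Atiyah gives that the \emph{division} closure is a division ring, and without $*$-closedness you can invoke neither $\calR(\Gamma)\subseteq\calD(\Gamma)$ nor the minimality step $\calR(\Gamma)\subseteq F\ast Q$. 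The converse is where you genuinely diverge. The paper proves it via a more general operator-theoretic result (Theorem \ref{theorem-finite.virtually.abelian}): an $\ell^2$-argument showing that module-finiteness of $\calR(\Gamma)$ over $\calR(\Gamma)\cap\calU(\Delta(\Gamma))$ forces $[\Gamma:\Delta(\Gamma)]<\infty$ (Lemma \ref{lemma-preliminary.vNregular}), the fact that $\mathrm{Z}(\calU(\Gamma))$ is supported on the FC-center $\Delta(\Gamma)$ (Lemma \ref{lemma-group.central}), and then a centralizer argument inside $\Delta(\Gamma)$. You instead note that a division algebra of finite dimension $d$ over its center satisfies the standard identity $s_{d+1}$, so the subring $\C[\Gamma]$ is a PI-ring, and conclude by Isaacs--Passman. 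That is valid and much shorter, and it needs neither finite generation of $\Gamma$ nor any operator theory, since Isaacs--Passman applies to arbitrary groups in characteristic zero. What the paper's longer route buys is self-containedness --- the work is done with affiliated operators rather than outsourced to the deep Isaacs--Passman theorem --- plus the quantitative index bound in Lemma \ref{lemma-preliminary.vNregular} and the stronger Theorem \ref{theorem-finite.virtually.abelian}, which assumes only that $\calR(\Gamma)$ is semisimple and module-finite over its center; your argument as written covers only the division-algebra case.
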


Let us now briefly describe how we develop a criterion for the existence of units in $\C[\Gamma]$ in the general case when $\Gamma$ is an arbitrary virtually abelian group. The details are contained in Section \ref{section-general.procedure}. Suppose we are given a polynomial $p(x)$ of degree $d \geq 2$ with coefficients in $\text{Z}(k[\Gamma])$, the center of $k[\Gamma]$. Suppose further that there exists a root of $p(x)$ in $\text{Z}(k[\Gamma])$, call it $\kappa$. We can then write $p(x)$ as the product $(x-\kappa)q(x)$ for some polynomial $q(x)$ with coefficients also in $\text{Z}(k[\Gamma])$. Then if $T \in k[\Gamma]$ is any element such that $p(T) = 1$, we obtain $$1 = (T-\kappa) q(T),$$ so $T - \kappa$ is a unit in $k[\Gamma]$, which may be trivial or not. 

Let us focus on the case when $\Gamma$ is the Promislow's group $G$. First, let us make some preliminary observations about units in
$k[G]$ when $\text{char}(k) \neq 2$. As we show in Propositions \ref{proposition-two.groups} and \ref{proposition-omega.two.groups},  the non-trivial units in $k[G]$ can be classified into two types. Let us write $\calZ$ for the center of $\calR(\Gamma)$ and let $U$ be a non-trivial unit in $k[G]$. Write $\omega^U = a + bs + ct + du \in k[G]$, where $a,b,c,d \in k[\Z^3]$, and $s,t,u$ are the standard generator of $G$. Then both of the fields $\calZ(a+bs)$ and $\calZ(ct+du)$ have the same degree over $\calZ$ and this common degree is equal to either $2$ or $4$. We say that $U$ is of type $2$ in the first case and of type $4$ in the second case. A direct check shows that the units found by Murray in \cite{Murray} when $\text{char}{(k)}>2$ are of type $2$ (Proposition \ref{proposition-omega.22.examples}), and as such it makes sense to ask about criteria for existence of units of type $2$ when $\text{char}{(k)} = 0$.

With this in mind, let us state our criterion for the existence of units of type $2$ in $\C[G]$.

\begin{theorem*}[Theorem \ref{theorem-criterion.unit}]
Let $G$ be the Promislow group. Then there exist non-trivial units of type 2 in $\C[G]$ if and only if the following system of quadratic equation in $9$ variables with coefficients in $\calZ$ has non-trivial solutions in $\calZ$:
\begin{align*}
\diamond \text{ } & (A^2-1)\omega_x^2 + C_1^2(\kappa_x+1)(\kappa_y-1)-C_2^2(\kappa_x-1)(\kappa_y+1) + C_3^2(\kappa_x+1)(\kappa_y+1)-C_4^2(\kappa_x-1)(\kappa_y-1) \\
& \phantom{(A^2-1)\omega_x^2} + 2D_1^2(\kappa_z-1)-2D_2^2\omega_x^2(\kappa_z+1) + 2D_3^2(\kappa_z+1)-2D_4^2\omega_x^2(\kappa_z-1) = 0, \\
\diamond \text{ } & (\kappa_x-1)C_2C_4 - (\kappa_x+1)C_1C_3 = 0, \\
\diamond \text{ } & (\kappa_x-1)C_2D_2 + C_3D_3 = 0, \\
\diamond \text{ } & (\kappa_x+1)C_1D_2 + C_4D_3 = 0, \\
\diamond \text{ } & (\kappa_x-1)C_4D_4 + C_1D_1 = 0, \\
\diamond \text{ } & (\kappa_x+1)C_3D_4 + C_2D_1 = 0.
\end{align*}
Here $\kappa_g = \frac{1}{2}(g+g^{-1}) \in \C[G]$ and $\omega_g = \frac{1}{2}(g-g^{-1}) \in \C[G]$ for $g \in G$.
\end{theorem*}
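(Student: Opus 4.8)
The plan is to translate the two defining conditions for a non-trivial type-$2$ unit---that $\omega^U = a + bs + ct + du$ be a unit, and that the summands $a+bs$ and $ct+du$ each generate a quadratic extension of $\calZ$, as recorded in Propositions \ref{proposition-two.groups} and \ref{proposition-omega.two.groups}---into explicit polynomial identities, and then to simplify until exactly the displayed system of six equations in the nine unknowns remains.

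First I would make the division algebra $\calR(G)$ completely explicit as a crossed product. The three automorphisms $\sigma_s,\sigma_t,\sigma_u$ induced by conjugation by $s,t,u$ realize $\mathrm{Gal}(\C(x,y,z)/\calZ)\cong(\Z/2)^2$, each $\sigma_g$ inverting two of the three coordinates, so $\calR(G)$ is the crossed product of $\C(x,y,z)/\calZ$ with this group and a $2$-cocycle recording the values of $s^2,t^2,u^2$ in $\Z^3$. After relabelling the coordinates so that $\sigma_s$ fixes $x$, the fixed field of $\sigma_s$ is the quadratic extension $\calZ(\omega_x)$; this is why $\omega_x$ (rather than $\omega_y,\omega_z$) plays a distinguished role for the $a+bs$ block, and symmetric reasoning governs the $ct+du$ block.

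Next I would compute the quadratic-extension conditions directly. Writing $s^2=\lambda_s$ and expanding $(a+bs)^2 = (a^2 + b\,\sigma_s(b)\lambda_s) + (ab + b\,\sigma_s(a))\,s$, the requirement $[\calZ(a+bs):\calZ]=2$ becomes the condition that the trace $a+\sigma_s(a)$ and the norm $a\sigma_s(a)-b\sigma_s(b)\lambda_s$ both lie in $\calZ$ --- equivalently, that they be $\sigma_t$-invariant, since they are automatically $\sigma_s$-invariant. Expanding $a,b,c,d$ in the basis of $\C[\Z^3]$ given by the elements $\kappa_g$ and $\omega_g$ and imposing these invariances cuts the coefficient space down to the nine free parameters $A, C_1,\dots,C_4, D_1,\dots,D_4$. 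I would record precisely which homogeneous piece of each block each variable parametrizes, so that the factors $(\kappa_x\pm1)(\kappa_y\pm1)$ and $\omega_x^2(\kappa_z\pm1)$ appearing in the first displayed equation are read off from the norm forms of the two blocks.

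Finally I would impose the unit condition. Using the factorization device from the introduction---so that being a unit reduces to a norm equation forcing $\omega^U(\omega^U)^{*}$ to collapse to a central trivial unit, normalized to $1$---I would multiply out and collect coefficients in the $\C[\Z^3]$-basis $\{1,s,t,u\}$ and then in their symmetric and antisymmetric parts. The $1$-component yields the single scalar equation (the term $A^2-1$ together with the eight $C_i^2,D_i^2$ norm contributions), while the vanishing of the $s$-, $t$- and $u$-components, i.e. the cancellation of all cross terms between the two blocks, produces the five bilinear relations among the $C_i$ and $D_i$. The main obstacle is twofold. First, one must organize this expansion so that the numerous cross terms collapse to precisely these six equations rather than to a larger, redundant system; this is heavy but mechanical once the cocycle and the parametrization are fixed. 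Second, and more delicately, one must establish the exact dictionary between \emph{non-trivial} solutions and \emph{non-trivial} units: I would verify that the degenerate solutions---those with all $C_i=D_i=0$, or with $A=\pm1$---correspond to trivial units or to non-units, so that a genuinely non-trivial solution yields a unit not of the form $\lambda g$, and conversely. Closing this equivalence cleanly, rather than the computation itself, is where I expect the real care to be required.
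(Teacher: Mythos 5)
Your overall architecture parallels the paper's (pass to $T=\omega^U$, split into the two blocks, expand in adapted bases, separate one scalar equation from cross-term cancellations, and keep the non-triviality dictionary in mind), but there is a genuine gap at the parametrization step, where you claim that imposing the quadratic-extension (trace/norm) conditions on the blocks ``cuts the coefficient space down to the nine free parameters $A,C_1,\dots,C_4,D_1,\dots,D_4$''. That is not true: the conditions that $a+\sigma_s(a)$ and $a\sigma_s(a)-b\sigma_s(b)x$ lie in $\calZ$ do \emph{not} force $b=0$ and $a=A\omega_x$ with $A\in\calZ$. For instance $a=\omega_y,\ b=0$ has trace $0$ and norm $1-\kappa_y^2\in\calZ$, and $a=0,\ b=(1-\ol{x})\mu$ with $\mu\in\calZ$ has trace $0$ and norm $-2(\kappa_x-1)\mu^2\in\calZ$; neither block is a central multiple of $\omega_x$. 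So a type-$2$ conjugate a priori has many more degrees of freedom than nine, your expansion would yield a larger system (with $b\neq 0$ the $t$- and $u$-components of the relevant identities no longer vanish identically), and the forward implication ``non-trivial type-$2$ unit $\Rightarrow$ non-trivial solution of the displayed nine-variable system'' is not established. The paper closes exactly this gap by a replacement trick, Proposition \ref{proposition-special.22}: given any $\omega$-conjugate $T$ of $\omega$-degree $(2,2)$, one passes to $S:=\omega^{U_T}$ with $U_T=\kappa-T$ and computes $S_1=(1+2T_{-1}^2)\omega$ and $S_{-1}=2(T_1-\kappa)T_{-1}\omega$, where $1+2T_{-1}^2\in\calZ$ by Proposition \ref{proposition-two.groups}. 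Only after this normalization (Corollary \ref{corollary-special.22}) does the single central parameter $A$, and hence the count of nine variables, appear.

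A second, more local, confusion: the equation you propose to expand --- that $\omega^U(\omega^U)^{*}$ collapse to $1$ --- is not the right one. The involution plays no role in the unit criterion (it enters only the unitary criterion, Theorem \ref{theorem-criterion.unitary}), and $\omega^U(\omega^U)^{*}$ is in general neither central nor $1$; one has $T^{*}=-T$ only when $U$ is unitary. The identity that generates the system is $T^2=\omega^2$, which holds automatically for $T=\omega^U$ and is then expanded for the normalized form $T=A\omega+ct+du$ in the $\gamma_i,\delta_i$ bases: the $\calL$-component gives the first displayed equation, and the vanishing of the $s$-component alone gives the four bilinear relations together with the determinant condition $(\kappa_x-1)C_2C_4-(\kappa_x+1)C_1C_3=0$ (Proposition \ref{proposition-equivalence.central}); the $t$- and $u$-components vanish identically in the normalized form, contrary to your accounting. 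Finally, the dictionary between non-trivial solutions and non-trivial units, which you rightly flag as delicate, is handled by Proposition \ref{proposition-omega.conjugate.to.unit}(iii) and is not the main difficulty; the missing normalization above is.
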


This criterion should be compared with the determinant condition stated in \cite{determinant}, which gives a single polynomial equation of degree $4$ with $16$ variables. In this sense, our criterion gives a `simpler' polynomial equation than the determinant condition.

\begin{remark*}
It is not known if the group ring $\C[G]$ of the Promislow's group contains any non-trivial unitaries. In Theorem \ref{theorem-criterion.unitary} we state an if-and-only-if criterion similar to the theorem above for the existence of unitaries.
\end{remark*}

The paper is organized as follows. In Section \ref{section-preliminaries} we collect the definitions of von Neumann regular and $*$-regular rings, together with the definition of the $*$-regular closure of a subring in a $*$-regular ring. We also recall the construction of a crossed product of a ring with a group, from which the group ring of a ring is a particular case of it. In Section \ref{section-virtually.abelian} we present and prove the first of our main results, namely the characterization of virtually abelian groups $\Gamma$ in terms of algebraic properties of the $*$-regular closure of the group ring $\C[\Gamma]$ inside $\calU(\Gamma)$, the algebra of (unbounded) operators affiliated to the group von Neumann algebra $\calN(\Gamma)$. We use part of this characterization, in Section \ref{section-general.procedure}, to briefly present a general procedure for finding units in group algebras of virtually abelian groups. We specialize, and further analyze, this procedure in the case $\Gamma$ is the Promislow's group $G$, culminating in the criterias presented in Theorems \ref{theorem-criterion.unit} and \ref{theorem-criterion.unitary}, which give necessary and sufficient conditions for finding non-trivial units (resp. unitaries) in the group ring of $G$. This is done in Section \ref{section-criterion.promislow}.

\section{Preliminaries}\label{section-preliminaries}

\subsection{Central division algebras}\label{subsection-central.division.algebras}

Let $k$ be any field. Throughout this section, and for the rest of the paper, a $k$-algebra will be a unital, associative algebra $A$ over the field $k$. Its center will be denoted by $\mathrm{Z}(A)$. It always holds that $k \subseteq \mathrm{Z}(A)$. A $k$-algebra $A$ is called \textit{central} if this inclusion is in fact an equality, that is $k = \mathrm{Z}(A)$.

A $k$-algebra $A$ is a \textit{division algebra} if every non-zero element $a \in A$ is two-sided invertible, meaning that there are elements $b,c \in A$ such that $ab = 1$ and $ca = 1$. In this case necessarily $b = c$ and this common element is the unique element satisfying the two equations $ax = 1, ya = 1$. We will denote it by $a^{-1}$, called the \textit{inverse} of $a$. So a division algebra over $k$ is a division ring which is also an algebra over $k$.

In this paper we adopt the following convention.

\begin{definition}\label{definition-central.disivion.algebra}
A $k$-algebra $A$ will be called a \textit{central division algebra} if
\begin{enumerate}[a),leftmargin=0.7cm]
\item $A$ is central;
\item $A$ is a division algebra;
\item $A$ is \textit{finite-dimensional} over $\mathrm{Z}(A) = k$.
\end{enumerate}
We will denote by $[A:k]$ the dimension of $A$ over $k$.
\end{definition}

More generally, if $\calR$ denotes any division ring and if $V$ is a left $\calR$-module, then $[V:\calR]$ will denote the dimension of $V$ over $\calR$.

\subsection{Virtually abelian groups}\label{subsection-virtually.abelian.groups}

A group $\Gamma$ is said to be \textit{virtually abelian} if there exists an abelian subgroup of finite index. Equivalently, if there exists a normal abelian subgroup of finite index.

In this paper we will work with finitely generated groups only. In this case, a finitely generated virtually abelian group $\Gamma$ contains a finitely generated normal abelian subgroup $N$ of finite index. If moreover $\Gamma$ is assumed to be torsion-free, then necessarily $N$ is isomorphic to $\Z^n$ for some $n \geq 1$. 

\subsection{Von Neumann regular and \texorpdfstring{$*$}{}-regular rings}\label{subsection-vN.regular.rings}

A unital ring $R$ is \textit{(von Neumann) regular} if for every element $x \in R$ there exists $y \in R$ such that $xyx = x$. Note that the element $e = xy$ is an idempotent in $R$, that is $e^2 = e$.


A \textit{$*$-regular ring} is a regular ring $R$ endowed with a proper involution $* \colon R \to R$, proper meaning that $x^*x = 0$ implies $x = 0$. It is not difficult to show that in a $*$-regular ring, the following hold. For every element $x \in R$ there exist \textit{unique} projections $e,f \in R$ (that is elements $p \in R$ satisfying $p^2 = p^* = p$) such that $xR = eR$ and $Rx = Rf$, and moreover there exists a unique element $y \in fRe$, called the \textit{relative inverse} of $x$ and denoted by $\overline{x}$, such that $xy = e$ and $yx = f$. We denote by $\text{LP}(x)$ the projection $e$, termed the \textit{left projection} of $x$, and by $\text{RP}(x)$ the projection $f$, termed the \textit{right projection} of $x$.


For any subset $S$ of a $*$-regular ring $R$, there exists a smallest $*$-regular subring of $R$ containing $S$, denoted by $\calR(S,R)$ and termed the $*$-regular closure of $S$ in $R$ \cite[Proposition 6.2]{AraGoodearl}. It always contains the division closure $\calD(S,R)$, which we recall it is defined as the smallest subring of $R$ containing $S$ and closed under taking inverse of elements, when they exist in $R$ (so if $x \in \calD(S,R)$ is invertible in $R$ with inverse $x^{-1}$, then $x^{-1} \in \calD(S,R)$).

\subsection{Crossed products and group algebras}\label{subsection-crossed.products}

Let $R$ be a unital ring and $\Gamma$ be any countable group. A \textit{crossed product}, denoted by $R \ast \Gamma$, is a unital ring in which every element can be uniquely written as a finite sum
$$\sum_{g \in \Gamma} r_g g$$
with $r_g \in R$. The sum is given componentwise, and the product is described by the rule
$$(r g) \cdot (s h) = r \text{ } \sigma_g(s) \text{ } \tau(g,h) \text{ } gh$$
for $r,s \in R$ and $g,h \in \Gamma$, where $\tau : \Gamma \times \Gamma \ra R^{\times}$ and $\sigma : \Gamma \ra \text{Aut}(R)$ are maps satisfying the following properties:
\begin{enumerate}[a),leftmargin=0.7cm]
\item $\sigma_1 = \text{id}_R$, and $\tau(1,h) = \tau(g,1) = 1$ for all $g,h \in \Gamma$;
\item $\tau(g,h) \tau(gh,f) = \sigma_g(\tau(h,f)) \tau(g,hf)$ for all $g,h,f \in \Gamma$ (so $\tau$ is a $2$-cocycle for $\sigma$);
\item $\tau(g,h) \sigma_{gh}(r) = \sigma_g(\sigma_h(r)) \tau(g,h)$ for all $g,h \in \Gamma$, $r \in R$.
\end{enumerate}
Here $R^{\times}$ is the group of units of $R$.

In the particular case that $\tau$ and $\sigma$ are the trivial maps, we obtain the \textit{group ring} $R[\Gamma]$. We will we particularly interested in the case that $R = k$ is a field. If moreover $R = k$ is a field endowed with an involution $*$, it linearly extends to an involution on $k[\Gamma]$ by the rule
$$(\lambda g)^* = \lambda^* g^{-1}$$
for $\lambda \in k, g \in \Gamma$.

\begin{definition}\label{definition-nontrivial.unit}
An element $u \in k[\Gamma]$ is called a \textit{unit} if there exists $v \in k[\Gamma]$ such that $1 = uv = vu$. It will be called a \textit{trivial} unit if $u = \lambda g$ for some $\lambda \in k \backslash \{0\}$ and $g \in \Gamma$, otherwise it is called a \textit{non-trivial} unit.
\end{definition}

Take now $k = \C$ with complex conjugation as involution. Let $l^2(\Gamma)$ denote the Hilbert space with orthonormal basis the elements of $\Gamma$. Then $\C[\Gamma]$ acts faithfully on $l^2(\Gamma)$ by left multiplication, and so we may identify $\C[\Gamma]$ as a subset of $\calB(l^2(\Gamma))$, i.e. as bounded operators on $l^2(\Gamma)$. The \textit{group von Neumann algebra of $\Gamma$}, denoted by $\calN(\Gamma)$, is defined to be the weak closure of $\C[\Gamma]$ in $\calB(l^2(\Gamma))$. Algebraically, it is characterized as follows: $\calN(\Gamma)$ consists of all bounded operators $T \in \calB(l^2(\Gamma))$ which commute with the action of $\C[\Gamma]$ on $l^2(\Gamma)$ given by \textit{right} multiplication.

Also, we denote by $\calU(\Gamma)$ the algebra of (possibly unbounded) operators affiliated to the von Neumann algebra $\calN(\Gamma)$. Again, we can give an algebraic characterization of it: $\calN(\Gamma)$ is an Ore domain, and $\calU(\Gamma)$ coincides with the classical ring of quotients of $\calN(\Gamma)$ \cite[Proposition 2.8]{unboundedop}.

We denote by $\calD(\Gamma) := \calD(\C[\Gamma],\calU(\Gamma))$ the division closure of $\C[\Gamma]$ in $\calU(\Gamma)$, and by $\calR(\Gamma) := \calR(\C[\Gamma],\calU(\Gamma))$ the $*$-regular closure of $\C[\Gamma]$ in $\calU(\Gamma)$. The latter exists since $\calU(\Gamma)$ is $*$-regular \cite[Note 2.11]{unboundedop}. We have the inclusions
$$\C[\Gamma] \subseteq \calD(\Gamma) \subseteq \calR(\Gamma) \subseteq \calU(\Gamma).$$

Note that for $H \leq \Gamma$ a subgroup of $\Gamma$, the inclusion $\C[H] \hookrightarrow \C[\Gamma]$ induces natural embeddings $\calN(H) \hookrightarrow \calN(\Gamma)$, $\calU(H) \hookrightarrow \calU(\Gamma)$ and $\calR(H) \hookrightarrow \calR(\Gamma)$.

\section{Central division algebras and virtually abelian groups}\label{section-virtually.abelian}

Let $\Gamma$ be a finitely generated torsion-free group. In this section we give a necessary and sufficient condition on the $*$-regular closure $\calR(\Gamma)$ in order to guarantee that $\Gamma$ be virtually abelian. The main theorem is the following.

\begin{theorem}\label{theorem-characterization.virtually.abelian}
Let $\Gamma$ be a finitely generated torsion-free group. The following statements are equivalent.
\begin{enumerate}[(1),leftmargin=0.7cm]
\item $\Gamma$ is virtually abelian.
\item $\calR(\Gamma)$ is a central division algebra.
\end{enumerate}
\end{theorem}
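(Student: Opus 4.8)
The plan is to prove the two implications by completely different means: the direction $(1)\Rightarrow(2)$ is a structural computation with crossed products and Ore localization, whereas $(2)\Rightarrow(1)$ rests on the theory of polynomial identity (PI) algebras.

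For $(1)\Rightarrow(2)$, I would start from the fact recorded in Subsection \ref{subsection-virtually.abelian.groups}: a finitely generated torsion-free virtually abelian $\Gamma$ contains a normal subgroup $N \cong \Z^n$ of finite index $m$, with $\Gamma/N$ finite acting on $N$ by conjugation. First I would identify $\calR(N)$. The group ring $\C[N]=\C[\Z^n]$ is a commutative domain, its division closure inside $\calU(N)$ is its field of fractions $F:=\C(x_1,\dots,x_n)$, and since a field is already $*$-regular we get $\calR(N)=\calD(N)=F$. Writing $\C[\Gamma]=\C[N]*(\Gamma/N)$ as a crossed product, I would then pass to the Ore localization: $\C[\Gamma]$ is a Noetherian domain (it is a torsion-free polycyclic-by-finite group ring), hence an Ore domain, and localizing at $\C[N]\setminus\{0\}$ produces the crossed product $F*(\Gamma/N)$. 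This is a domain that is finite-dimensional (dimension $m$) as a left $F$-vector space, and a finite-dimensional domain over a field is automatically a division ring; moreover it coincides with $\calD(\Gamma)$ and with $\calR(\Gamma)$, being a division (hence $*$-regular) ring squeezed between $\C[\Gamma]$ and $\calU(\Gamma)$. Finally, to control the center $\calZ$, let $G_0$ be the finite image of $\Gamma/N$ in $\Aut(F)$; the fixed field $F^{G_0}$ is central in $F*(\Gamma/N)$, and by Artin's theorem $[F:F^{G_0}]=|G_0|<\infty$, so that $[\calR(\Gamma):\calZ]\le[\calR(\Gamma):F^{G_0}]=m\,|G_0|<\infty$. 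Thus $\calR(\Gamma)$ is a division ring, finite-dimensional over its center, i.e.\ a central division algebra.

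For $(2)\Rightarrow(1)$, the key observation is that finite-dimensionality over the center forces a polynomial identity. If $[\calR(\Gamma):\calZ]=N<\infty$, then the left regular representation embeds $\calR(\Gamma)$, as a $\calZ$-algebra, into $M_N(\calZ)$, which by Amitsur--Levitzki satisfies the standard identity $S_{2N}$. Hence $\calR(\Gamma)$ is a PI-algebra, and since polynomial identities pass to subrings, the group ring $\C[\Gamma]\subseteq\calR(\Gamma)$ is a PI-algebra as well. At this point I would invoke Passman's characterization of group algebras satisfying a polynomial identity: over a field of characteristic zero, $\C[\Gamma]$ is PI if and only if $\Gamma$ has an abelian subgroup of finite index. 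This is precisely the statement that $\Gamma$ is virtually abelian, completing the equivalence.

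The technical heart, and the main obstacle, lies in the forward direction: one must verify carefully that the Ore localization of $\C[\Gamma]$ genuinely sits inside $\calU(\Gamma)$ and coincides with the $*$-regular closure — equivalently, that $\calD(\Gamma)$ is already a division ring so that $\calR(\Gamma)=\calD(\Gamma)$. This is exactly where torsion-freeness of $\Gamma$ is indispensable, as it is what guarantees that $\C[\Gamma]$ is a domain (a crossed product of a domain with a finite group need not be). In the reverse direction essentially all the weight is carried by the cited PI characterization; the only point to get right is the elementary reduction \emph{finite-dimensional over the center} $\Rightarrow$ \emph{$\calR(\Gamma)$ is PI} $\Rightarrow$ \emph{$\C[\Gamma]$ is PI}.
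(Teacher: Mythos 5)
Your argument is correct, but it splits from the paper in the converse direction. For $(1)\Rightarrow(2)$ you follow essentially the paper's route (Propositions \ref{proposition-computation.closures} and \ref{proposition-galois}): identify $\calR(\Gamma)$ with the Ore field of fractions $\calQ$, realize $\calQ$ as a crossed product of the rational function field with the finite quotient, and exhibit a central subfield of finite codimension; the only difference is that you merely bound $[\calQ:\calZ]$ via Artin's theorem applied to the fixed field, while the paper computes $\calZ = \calL^H$ exactly (using maximality of $N$), a sharper fact it needs later in Section \ref{section-criterion.promislow}. Note also that both you and the paper treat as immediate the same delicate point, namely that nonzero elements of $\C[\Gamma]$ are invertible in $\calU(\Gamma)$ so that $\calQ \subseteq \calU(\Gamma)$ at all; this is where Linnell-type results for virtually abelian groups enter, and you at least flag it explicitly. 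The genuine divergence is in $(2)\Rightarrow(1)$: the paper proves the stronger Theorem \ref{theorem-finite.virtually.abelian} --- semisimplicity of $\calR(\Gamma)$ plus module-finiteness over its center already suffice --- by a self-contained argument combining the operator-theoretic Lemma \ref{lemma-preliminary.vNregular} (an $l^2$-orthogonality and flatness argument forcing $[\Gamma:\Delta(\Gamma)]<\infty$), Lemma \ref{lemma-group.central} placing $\mathrm{Z}(\calU(\Gamma))$ inside $\calU(\Delta(\Gamma))$, and an elementary centralizer argument in the FC-center $\Delta(\Gamma)$. You instead note that finite-dimensionality over the center embeds $\calR(\Gamma)$ into $M_N(\calZ)$, so by Amitsur--Levitzki it is a PI-algebra, hence so is the subring $\C[\Gamma]$, and then invoke the Isaacs--Passman characterization: in characteristic zero a group algebra is PI if and only if the group is virtually abelian. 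This shortcut is valid and has its own advantages --- it is much shorter, and for this implication it needs neither finite generation nor torsion-freeness of $\Gamma$ --- but it outsources all the difficulty to a deep external theorem, and, as stated, it uses the division-algebra hypothesis to get the matrix embedding over a field, so it does not directly recover the paper's more general semisimple, module-finite version (though it can be adapted: a ring that is a finitely generated module over a central subring still satisfies a standard identity, by lifting endomorphisms along a free presentation).
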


The rest of the section is devoted to prove Theorem \ref{theorem-characterization.virtually.abelian} above, although some tangential results will also be obtained.

\subsection{From virtually abelian groups to central division algebras}\label{subsection-virtually.central}

Let us start by taking $\Gamma$ to be a finitely generated, torsion-free, virtually abelian group. Our convention for conjugation is that $h^g := g h g^{-1}$ for $g,h \in \Gamma$.

Let $N \leq \Gamma$ be a maximal normal abelian subgroup of finite index, so $N \cong \Z^n$ for some $n \geq 1$. Let $F := \Gamma/N$, which is finite. We have a natural action of $F$ on $N$ given by conjugation $n \mapsto n^f$, $f \in F$, which linearly extends to an action of $F$ on $k[N]$, also denoted by $p \mapsto p^f$, $f \in F$. Take $\tilde{F}$ to be a fixed right transversal for $N$ i $\Gamma$ and, for simplicity, we take the representative of $N$ to be the unit element $1 \in \Gamma$, that is $\tilde{1} = 1$. If $\pi : \Gamma \ra F$ denotes the natural quotient map, the image of an element $\tilde{f} \in \tilde{F}$ will be simply denoted by $f$, that is $f = \pi(\tilde{f})$. Therefore, for $p \in k[N]$ and $\tilde{f} \in \tilde{F}$,
$$p^{\tilde{f}} = p^f.$$

Fix now any field $k$ endowed with an involution $\ast$. Note that every element of the group algebra $k[\Gamma]$ can be uniquely written as
$$\sum_{\tilde{f} \in \tilde{F}} p_{\tilde{f}} \tilde{f}$$
for some Laurent polynomials $p_{\tilde{f}} \in k[N] \cong k[x_1^{\pm 1},...,x_n^{\pm 1}]$. Therefore, $k[\Gamma]$ is obtained as a crossed product
$$k[\Gamma] = k[N] \ast \tilde{F}$$
whose multiplicative structure is given by
$$(p \tilde{f}) \cdot (q \tilde{g}) = p \text{ } q^f \text{ } \tau(f,g) \text{ } \widetilde{fg},$$
where $\tau : F \times F \ra N$ is the $2$-cocycle given by $\tau(f,g) = \tilde{f} \text{ } \tilde{g} \text{ } \widetilde{fg}^{-1}$. In particular, we see that the map $F \ra \text{Aut}(k[N]), f \mapsto \cdot^f$ is a group homomorphism.

Since $\Gamma$ is virtually abelian, the group ring $k[\Gamma]$ satisfies Kaplansky's zero-divisor conjecture, i.e. it is a domain \cite[Theorem 1.4]{Linnell88}. In fact it is an \textit{Ore domain}. We denote by $\calQ := \calQ(k[\Gamma])$ its Ore field of fractions, which is a division algebra over $k$.

The following notation will be used throughout the section.

\begin{notation}\label{notation-calQ}
\text{ }
\begin{enumerate}[a),leftmargin=0.7cm]
\item We will denote by $\calZ$ the center of $\calQ$, which is a field. 
\item For a subset $A \subseteq \calQ$, we denote by $\calZ(A)$ the minimal division algebra over $k$ containing $\calZ$ and the elements of $A$.
\item For any division ring $\calR$ and any $\alpha,\beta \in \calR$ with $\beta \neq 0$, conjugation of $\alpha$ by $\beta$ will be denoted by $\alpha^{\beta} := \beta \alpha \beta^{-1}$.
\end{enumerate}
\end{notation}

We start by computing, in the case $k = \C$, the division closure $\calD(\Gamma)$ and the $*$-regular closure $\calR(\Gamma)$ in terms of $\calQ$. It turns out that these three rings coincide.

\begin{proposition}\label{proposition-computation.closures}
For $k = \C$, we have
$$\calQ = \calD(\Gamma) = \calR(\Gamma).$$
\end{proposition}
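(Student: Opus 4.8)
The plan is to establish the two equalities $\calQ = \calD(\Gamma)$ and $\calD(\Gamma) = \calR(\Gamma)$ separately, exploiting the chain of inclusions $\C[\Gamma] \subseteq \calD(\Gamma) \subseteq \calR(\Gamma) \subseteq \calU(\Gamma)$ recorded above. The heart of the matter is a single invertibility statement inside $\calU(\Gamma)$; once it is in hand, both equalities follow from formal properties of Ore localizations, of division closures, and of the involution.

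The key step, and the one I expect to be the main obstacle, is to show that \emph{every nonzero element $x \in \C[\Gamma]$ is invertible in $\calU(\Gamma)$}. Since $\calU(\Gamma)$ is the classical ring of quotients of $\calN(\Gamma)$, it suffices to check that $x$ is a non-zero-divisor in $\calN(\Gamma)$, i.e. that left multiplication by $x$ is injective on $l^2(\Gamma)$; by finiteness of $\calN(\Gamma)$ this is equivalent to $\dim_{\calN(\Gamma)}\ker(x) = 0$. Here one invokes the analytic input: a torsion-free finitely generated virtually abelian group satisfies the strong Atiyah conjecture, so $\dim_{\calN(\Gamma)}\ker(x)$ is a non-negative integer; as $x\neq 0$ its range is nonzero, forcing this dimension to be strictly below $\dim_{\calN(\Gamma)} l^2(\Gamma) = 1$, hence equal to $0$. (Alternatively one may quote Linnell's theorem directly, which already packages this together with the conclusion that the division closure is a division ring.) This is the only place where the hypothesis $k = \C$, and genuine operator-algebraic information rather than pure algebra, enters.

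Granting this, $\calQ = \calD(\Gamma)$ follows from the universal property of the Ore localization. Let $\calQ'$ be the subring of $\calU(\Gamma)$ generated by $\C[\Gamma]$ together with the inverses of its nonzero elements. Because $\C[\Gamma]$ is an Ore domain and all its nonzero elements are now units in $\calU(\Gamma)$, the inclusion $\C[\Gamma] \hookrightarrow \calU(\Gamma)$ extends to an embedding of the division ring $\calQ$ onto $\calQ'$; in particular $\calQ'$ is a division ring. One then verifies the two inclusions defining $\calD(\Gamma)$: clearly $\calQ' \subseteq \calD(\Gamma)$, since $\calD(\Gamma)$ contains $\C[\Gamma]$ and is closed under inverses; conversely $\calQ'$ is itself a division ring containing $\C[\Gamma]$, hence closed under taking those inverses that exist in $\calU(\Gamma)$ (uniqueness of inverses in $\calU(\Gamma)$ identifies the inverse computed in $\calQ'$ with the one in $\calU(\Gamma)$), so by minimality $\calD(\Gamma) \subseteq \calQ'$. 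Thus $\calD(\Gamma) = \calQ' \cong \calQ$, and we identify $\calQ$ with $\calD(\Gamma)$.

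Finally, for $\calD(\Gamma) = \calR(\Gamma)$ it is enough, by minimality of the $*$-regular closure together with the inclusion $\calD(\Gamma) \subseteq \calR(\Gamma)$, to show that $\calD(\Gamma)$ is already a $*$-regular subring of $\calU(\Gamma)$. Being a division ring it is von Neumann regular ($x\,x^{-1}x = x$ for $x \neq 0$), and it carries the proper involution inherited from $\calU(\Gamma)$ as soon as it is $*$-closed. Closure under $*$ holds because $*$ is an anti-automorphism of $\calU(\Gamma)$ fixing $\C[\Gamma]$ setwise (as $(\lambda g)^* = \bar\lambda\, g^{-1}$), and therefore sends the division closure of $\C[\Gamma]$ to itself: $\calD(\Gamma)^*$ is again a subring containing $\C[\Gamma]$ and closed under inverses, whence $\calD(\Gamma) \subseteq \calD(\Gamma)^*$ by minimality, and applying $*$ once more yields the reverse inclusion. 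Hence $\calD(\Gamma)$ is $*$-regular, so $\calR(\Gamma) \subseteq \calD(\Gamma)$, completing the chain $\calQ = \calD(\Gamma) = \calR(\Gamma)$.
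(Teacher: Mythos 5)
Your proof is correct and follows essentially the same route as the paper's: once $\calQ$ is identified with a subring of $\calU(\Gamma)$, both arguments reduce to the observation that a $*$-closed division ring containing $\C[\Gamma]$ is von Neumann regular, so minimality of the division and $*$-regular closures forces $\calQ = \calD(\Gamma) = \calR(\Gamma)$. The only difference is one of detail, not of strategy: the paper dismisses the inclusions $\calQ \subseteq \calD(\Gamma) \subseteq \calR(\Gamma)$ as obvious, whereas you justify the underlying embedding of $\calQ$ into $\calU(\Gamma)$ via the strong Atiyah conjecture and establish $*$-closedness of $\calD(\Gamma)$ by the anti-automorphism/minimality argument rather than asserting it directly for $\calQ$.
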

\begin{proof}
The inclusions $\calQ \subseteq \calD(\Gamma) \subseteq \calR(\Gamma)$ are obvious. Now, $\calQ$ is a division algebra, so it is von Neumann regular. It is also $*$-closed and contains $k[\Gamma]$, so $\calR(\Gamma) \subseteq \calQ$, finishing the proof of the proposition.
\end{proof}

Let us return to the general setting where $k$ is an arbitrary field with involution $\ast$. We define $\calL$ to be the field of fractions of $k[N] \cong k[x_1^{\pm 1},...,x_n^{\pm 1}]$, which can be seen as a subfield of $\calQ$. In fact, it can be identified with $k(x_1,...,x_n)$, the field of rational functions in $n$ variables $x_1,...,x_n$, and we will do so throughout the paper. Each element $f \in F$ acts on $\calL$ by conjugation, so it naturally induces an element of $\text{Autl}(\calL/k)$, which we will denote by $\alpha_f$. It is easy to see, using maximality of the abelian subgroup $N$, that the map $\alpha \colon F \ra \text{Aut}(\calL/k), f \mapsto \alpha_f$ is an injective group homomorphism, so we identify $F$ with the subgroup $H := \text{im}(\alpha)$.

\begin{proposition}\label{proposition-galois}
The following statements hold true.
\begin{enumerate}[(i),leftmargin=0.7cm]
\item The ring $\calQ$ is Artinian, and moreover
$$\calQ = \calL \ast \tilde{F}$$
for a suitable crossed product. In particular, $\tilde{F}$ is a basis of $\calQ$ over $\calL$, where $\calL$-multiplication is given from the left. Therefore $[\calQ : \calL] = |F|$.
\item If we denote by $\calL^H$ the fixed subfield of $\calL$ associated with the subgroup $H \leq \emph{Aut}(\calL/k)$, then
$$\calZ = \calL^H.$$
In particular, $\calQ$ is finite-dimensional over $\calZ$, and in fact $[\calQ : \calZ] = |F|^2$.
\end{enumerate}
\end{proposition}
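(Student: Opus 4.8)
The plan is to prove (i) by realising the crossed product $\calL \ast \tilde{F}$ concretely as a subring of $\calQ$ and exploiting finite-dimensionality over the field $\calL$, and then to deduce (ii) by a direct computation of the center of this crossed product, the only essential external input being the injectivity of $\alpha$. For (i), I would first note that the conjugation action of each $\tilde{f}$ on $k[N]$ is by a $k$-algebra automorphism, hence extends to the fraction field $\calL$ as $\alpha_f$, while the $2$-cocycle $\tau$ already takes values in $N \subseteq \calL^{\times}$; thus the crossed-product multiplication rule $(p\tilde{f})(q\tilde{g}) = p\,q^{f}\,\tau(f,g)\,\widetilde{fg}$ makes sense verbatim with $p,q \in \calL$. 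Let $\calM \subseteq \calQ$ be the left $\calL$-span of $\tilde{F}$. The displayed rule shows that the product of two such elements again has its coefficient $p\,q^{f}\,\tau(f,g)$ in $\calL$, so $\calM$ is a subring of $\calQ$ (containing $1 = 1\cdot\tilde{1}$ and $k[\Gamma]$), and it is exactly $\calL \ast \tilde{F}$. A common-denominator argument shows $\tilde{F}$ remains $\calL$-linearly independent in $\calQ$: clearing denominators on the left reduces any dependence to one over $k[N]$ inside $k[\Gamma]$, where $\tilde{F}$ is a basis. Hence $[\calM:\calL] = |F|$.

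Next I would argue that $\calM$ is all of $\calQ$. As a subring of the division ring $\calQ$, $\calM$ is a domain, and being finite-dimensional over the field $\calL$ it is Artinian; an Artinian domain is a division ring (the chain $a\calM \supseteq a^{2}\calM \supseteq \cdots$ stabilises, yielding a one-sided inverse for each $0 \neq a$, and symmetrically on the other side). Now $k[\Gamma] \subseteq \calM \subseteq \calQ$ with $\calM$ a division ring forces $\calM = \calQ$: every element of $\calQ$ is a fraction $ab^{-1}$ with $a,b \in k[\Gamma]$, and such a fraction already lies in the division ring $\calM$. This proves $\calQ = \calL \ast \tilde{F}$, that $\tilde{F}$ is an $\calL$-basis, that $[\calQ:\calL] = |F|$, and that $\calQ$ is Artinian (finite-dimensional over a field).

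For (ii) I would compute $\calZ = \mathrm{Z}(\calQ)$ in the basis $\tilde{F}$. The inclusion $\calL^{H} \subseteq \calZ$ is immediate: $c \in \calL^{H}$ commutes with $\calL$ since $\calL$ is commutative, and with each $\tilde{f}$ since $\tilde{f}c\tilde{f}^{-1} = c^{f} = c$. For the reverse inclusion, write a central $z = \sum_{\tilde{f}} p_{\tilde{f}}\tilde{f}$ with $p_{\tilde{f}} \in \calL$; commuting $z$ with an arbitrary $c \in \calL$ and comparing coefficients in the basis $\tilde{F}$ gives $p_{\tilde{f}}(c^{f}-c) = 0$ for all $c$. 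This is exactly where injectivity of $\alpha$ (equivalently, maximality of $N$) is used: for $f \neq 1$ some $c$ has $c^{f} \neq c$, forcing $p_{\tilde{f}} = 0$, so $z = p_{1} \in \calL$; commuting $z = p_{1}$ with each $\tilde{f}$ then gives $p_{1}^{f} = p_{1}$, i.e. $z \in \calL^{H}$. Thus $\calZ = \calL^{H}$. Finally, by Artin's theorem $\calL/\calL^{H}$ is Galois with $[\calL:\calL^{H}] = |H| = |F|$, so multiplying by $[\calQ:\calL] = |F|$ from (i) yields $[\calQ:\calZ] = |F|^{2}$. The step most likely to require care is the identification $\calM = \calL \ast \tilde{F}$ together with the verification that it is genuinely closed under multiplication (so that coefficients stay in $\calL$) and that $\tilde{F}$ stays $\calL$-independent; everything downstream, including the center computation, is then a mechanical consequence given the injectivity of $\alpha$.
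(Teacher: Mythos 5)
Your proof is correct and follows essentially the same route as the paper: you realise $\calQ$ as the crossed product $\calL \ast \tilde{F}$ (same common-denominator argument for $\calL$-independence, same Artinian/descending-chain argument to show the $\calL$-span is all of $\calQ$), and you compute the center coefficient-wise, using injectivity of $\alpha$ exactly where the paper invokes maximality of $N$ to produce an $n$ with $n^f \neq n$. The only cosmetic differences are that you define the $\calL$-span of $\tilde{F}$ directly and verify it is a subring (rather than taking the subring generated by $\calL$ and $\Gamma$ and showing it equals the span), and that you test centrality against arbitrary $c \in \calL$ rather than against elements of $N$.
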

\begin{proof}
\begin{enumerate}[(i),leftmargin=0.7cm]
\item Denote by $\calA$ the subring of $\calQ$ generated by $\calL$ and $\Gamma$. Every element of $\calA$ is a finite sum of monomials
$$x_1x_2 \cdots x_r$$
where either $x_i \in \calL$ or $x_i \in \tilde{F}$. Note that if $x \in \calL$ and $g = n \tilde{f} \in \Gamma$ for unique elements $n \in N, \tilde{f} \in \tilde{F}$, then
$$gx = n\tilde{f}x = n \tilde{f}x\tilde{f}^{-1} \tilde{f} = n x^{\tilde{f}} \tilde{f} \in \sum_{\tilde{f} \in \tilde{F}} \calL \tilde{f}.$$
With this we deduce that $\calA = \sum_{\tilde{f} \in \tilde{F}} \calL \tilde{f}$. To prove that this sum is direct, suppose that we have
\begin{equation}\label{equation-crossed.product}
\sum_{\tilde{f} \in \tilde{F}} x_{\tilde{f}} \tilde{f} = \sum_{\tilde{f} \in \tilde{F}} y_{\tilde{f}} \tilde{f}
\end{equation}
for some $x_{\tilde{f}}, y_{\tilde{f}} \in \calL$. Write
$$x_{\tilde{f}} = p_{\tilde{f}} q_{\tilde{f}}^{-1}, \quad y_{\tilde{f}} = r_{\tilde{f}} s_{\tilde{f}}^{-1}$$
where each $p_{\tilde{f}}, q_{\tilde{f}}, r_{\tilde{f}}, s_{\tilde{f}} \in k[N]$, $q_{\tilde{f}}, s_{\tilde{f}} \neq 0$. By multiplying \eqref{equation-crossed.product} on the left with the element $h = \prod_{\tilde{f} \in \tilde{F}} q_{\tilde{f}} s_{\tilde{f}} \neq 0$ we get
$$\sum_{\tilde{f} \in \tilde{F}} x_{\tilde{f}}h \tilde{f} = \sum_{\tilde{f} \in \tilde{F}} y_{\tilde{f}} h \tilde{f},$$
which is an equality in $k[\Gamma]$. But $k[\Gamma] = k[N] \ast \tilde{F}$, so necessarily $x_{\tilde{f}}h = y_{\tilde{f}}h$ for every $\tilde{f} \in \tilde{F}$, which implies $x_{\tilde{f}} = y_{\tilde{f}}$ as $h \neq 0$. This proves that the sum $\sum_{\tilde{f} \in \tilde{F}} \calL \tilde{f}$ is direct, and so $\calA = \calL \ast \tilde{F}$ for a suitable crossed product. In particular, $\calA$ is Artinian, and we deduce that $\calA = \calQ$. Indeed, given $ab^{-1} \in \calQ$ with $a,b \in k[\Gamma]$, $b \neq 0$, then the chain of ideals
$$b\calA \supseteq b^2 \calQ \supseteq \cdots \supseteq b^n \calA \supseteq \cdots$$
stabilizes, so we can find a positive integer $k \geq 1$ and $c \in \calA$ such that $b^k = b^{k+1}c$. This implies that $b^{-1} = c \in \calA$, and so $ab^{-1} \in \calA$, proving the equality $\calQ = \calA = \calL \ast \tilde{F}$.

The second part now easily follows.
		
\item Let us first show the inclusion $\calL^H \subseteq \calZ$. Given $x \in \calL^H$, it is clear that $x$ commutes with any $n \in N$. Take now $\tilde{f} \in \tilde{F}$. We compute
\begin{align*}
\tilde{f} x \tilde{f}^{-1} & = x^{\tilde{f}} = x^f = \alpha_f(x) = x, 
\end{align*}
so $x$ also commutes with $\tilde{F}$. Therefore $x$ is central in $k[\Gamma]$, and so also central in $\calQ$.
		
For the other inclusion $\calZ \subseteq \calL^H$, we first note the following. Due to maximality of $N \trianglelefteq \Gamma$, we have that $\ker(N \xrightarrow{\cdot^f} N) \neq N$ for any $f \in F \backslash \{1\}$. This says that, for any $\tilde{f} \in \tilde{F} \backslash \{1\}$, there exists $n \in N$ such that $n^f \neq n$.
		
Take $x \in \calZ$. Using part (i) we can uniquely write it as
$$x = \sum_{\tilde{f} \in \tilde{F}} l_{\tilde{f}} \tilde{f}$$
for some $l_{\tilde{f}} \in \calL$.  Fix $\tilde{g} \in \tilde{F} \backslash \{1\}$, and consider $n \in N$ such that $n^g \neq n$. Then $x$ commutes with $n$, so we have the equality
$$\sum_{\tilde{f} \in \tilde{F}} l_{\tilde{f}} n^f \tilde{f} = \sum_{\tilde{f} \in \tilde{F}} l_{\tilde{f}} n \tilde{f}.$$
By uniqueness, necessarily
$$l_{\tilde{f}} (n^f - n) = 0$$
for all $\tilde{f} \in \tilde{F}$. In particular, $l_{\tilde{g}} = 0$. We thus conclude that $x = l_{\tilde{1}} \tilde{1} = l_1 \in \calL$. Now, given any $f \in F$, we compute
$$\alpha_f(x) = \tilde{f} x \tilde{f}^{-1} = x$$
as $x \in \calZ$. This says that $x \in \calL^H$, and hence establishes the other inclusion. The last statement now follows from part (i) and the fact that $\calL / \calL^H$ is a finite Galois extension with Galois group $H$.
\end{enumerate}
\end{proof}

As a consequence of part (ii) of the above proposition, we see that $\calQ$ is finite-dimensional over its center $\calZ$. This turns $\calQ$ into a central division algebra when considered as an algebra over $\calZ$. Now one of the implications of Theorem \ref{theorem-characterization.virtually.abelian} is immediate.

\begin{proof}[Proof of $(1) \implies (2)$ of Theorem \ref{theorem-characterization.virtually.abelian}]
Just use Proposition \ref{proposition-computation.closures} and the conclusions above.
\end{proof}

\subsection{From central division algebras to virtually abelian groups}\label{subsection-central.virtually}

We start the section by stating the following theorem.

\begin{theorem}\label{theorem-finite.virtually.abelian}
Let $\Gamma$ be a finitely generated torsion-free group with $\calR(\Gamma)$ being a semisimple ring. Suppose that $\calR(\Gamma)$ is finitely generated as a $\emph{Z}(\calR(\Gamma))$-module. Then $\Gamma$ is virtually abelian.
\end{theorem}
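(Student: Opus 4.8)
The plan is to deduce from the ring-theoretic hypotheses that the group algebra $\C[\Gamma]$ satisfies a polynomial identity, and then to invoke Passman's characterization of group algebras that are PI. Concretely, the argument will run: ``$\calR(\Gamma)$ semisimple $+$ module-finite over its center'' $\Rightarrow$ ``$\calR(\Gamma)$ is a PI ring'' $\Rightarrow$ ``$\C[\Gamma]$ is a PI ring'' $\Rightarrow$ ``$\Gamma$ is virtually abelian''.

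First I would unpack the structure of $\calR(\Gamma)$. Being semisimple and Artinian, by Artin--Wedderburn it decomposes as a finite product $\calR(\Gamma) \cong \prod_{i=1}^{m} M_{n_i}(D_i)$ of matrix algebras over division rings $D_i$, with center $\prod_{i=1}^{m}\mathrm{Z}(D_i)$. The hypothesis that $\calR(\Gamma)$ is finitely generated over $\mathrm{Z}(\calR(\Gamma))$ descends componentwise (project along the central idempotents) to the statement that each factor $M_{n_i}(D_i)$ is module-finite over $\mathrm{Z}(D_i)$; equivalently each $D_i$ is finite-dimensional over its center, say $[D_i:\mathrm{Z}(D_i)] = d_i^2$. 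Hence each $M_{n_i}(D_i)$ is a central simple algebra of dimension $(n_i d_i)^2$ over $\mathrm{Z}(D_i)$, and therefore (after extending scalars to a splitting field and applying Amitsur--Levitzki to $M_{n_i d_i}$) it satisfies the standard identity $s_{2 n_i d_i}$. Setting $c = \max_i n_i d_i$, every factor, and thus the whole product $\calR(\Gamma)$, satisfies the multilinear standard identity $s_{2c}$, so $\calR(\Gamma)$ is a PI ring.

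Since $\C[\Gamma] \subseteq \calR(\Gamma)$ and $s_{2c}$ is multilinear, the subring $\C[\Gamma]$ inherits the same identity and is itself a PI ring. Now I would apply Passman's theorem characterizing PI group algebras: over a field of characteristic $0$, $k[\Gamma]$ satisfies a polynomial identity if and only if $\Gamma$ has an abelian subgroup of finite index. As $k = \C$ has characteristic $0$, this directly yields that $\Gamma$ is virtually abelian, which is the conclusion. (In the positive-characteristic variant Passman's theorem would only produce a finite-index subgroup whose commutator subgroup is a finite $p$-group; torsion-freeness of $\Gamma$ would then force that commutator subgroup to be trivial, again giving virtual abelianness—so the argument is robust even outside the complex setting.)

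I expect the real content to lie entirely in Passman's PI theorem; the decisive conceptual step is recognizing that ``semisimple plus module-finite over the center'' is exactly a repackaging of the PI condition, after which virtual abelianness is forced. The only points demanding care are routine: checking that module-finiteness over $\mathrm{Z}(\calR(\Gamma))$ genuinely passes to each Wedderburn factor, and assembling a single standard identity for the finite product. (One could even bypass semisimplicity here, since any ring that is module-finite over its center is automatically PI; but the Artin--Wedderburn route keeps the use of both hypotheses transparent.)
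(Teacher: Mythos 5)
Your proof is correct, but it takes a genuinely different route from the paper's. You repackage the hypotheses as a polynomial identity: Artin--Wedderburn decomposes $\calR(\Gamma)$ into matrix algebras over division rings, module-finiteness over the center descends along the central idempotents to each factor (making each one a finite-dimensional central simple algebra), Amitsur--Levitzki then gives a standard identity $s_{2c}$ for the whole product, which the subring $\C[\Gamma]$ inherits; the Isaacs--Passman theorem on PI group algebras in characteristic zero then yields that $\Gamma$ is virtually abelian. The paper never invokes PI theory: it works with the FC-center $\Delta(\Gamma)$ and two lemmas --- Lemma \ref{lemma-preliminary.vNregular}, which shows by an $l^2$-orthogonality argument (together with flatness of regular rings and Artinian-ness of the semisimple ring $\calR(\Gamma)$) that module-finiteness of $\calR(\Gamma)$ over $\calK := \calR(\Gamma) \cap \calU(H)$ forces $[\Gamma:H] < \infty$, and Lemma \ref{lemma-group.central}, which gives $\mathrm{Z}(\calU(\Gamma)) \subseteq \calU(\Delta(\Gamma))$ and hence $\mathrm{Z}(\calR(\Gamma)) \subseteq \calK$ for $H = \Delta(\Gamma)$ --- and then concludes with the B.H.\ Neumann-style observation that a finitely generated group all of whose conjugacy classes are finite has center of finite index. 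What your route buys is brevity and generality: you use neither finite generation nor torsion-freeness of $\Gamma$, since Passman's theorem needs neither. What it costs is that the entire group-theoretic content is outsourced to Isaacs--Passman, whose own proof runs through essentially the same $\Delta$-methods that the paper carries out by hand; the paper's argument is self-contained modulo standard facts about affiliated operators, and it exhibits the concrete operator-algebraic mechanism (orthogonality of the translates $s_i l^2(H)$ inside $l^2(\Gamma)$) by which module-finiteness forces finite index. Two minor points in your write-up, both harmless: multilinearity of $s_{2c}$ plays no role in passing to the subring $\C[\Gamma]$ (identities always restrict to subrings; multilinearity matters for the scalar extension you use when splitting the central simple algebras), and your closing remark is indeed correct that module-finiteness over the center already implies PI, via the regular representation $\calR(\Gamma) \hookrightarrow \operatorname{End}_{\mathrm{Z}(\calR(\Gamma))}(\calR(\Gamma))$, so semisimplicity serves only to organize the Wedderburn decomposition.
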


Before proving it, we need a couple of technical lemmas.

\begin{lemma}\label{lemma-preliminary.vNregular}
Let $G$ be a group with $\calR(G)$ being a semisimple ring. Let $H \leq G$ be a subgroup, and define $\calK := \calR(G) \cap \calU(H)$. Then $\calK$ is a regular ring.

Moreover, $\calR(G)$ becomes a $\calK$-module, and if $\calR(G)$ is finitely generated as a $\calK$-module, then $[G:H] < +\infty.$
\end{lemma}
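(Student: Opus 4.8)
The plan is to prove the two assertions separately. For the regularity of $\calK$ I would exploit that both $\calR(G)$ and $\calU(H)$ are $*$-regular subrings of the $*$-regular ring $\calU(G)$, each carrying the restriction of the single involution on $\calU(G)$ (the inclusion $\calU(H)\hookrightarrow\calU(G)$ being a $*$-embedding). Given $x\in\calK=\calR(G)\cap\calU(H)$, I would form its relative inverse $\overline{x}$ twice: once inside $\calR(G)$ and once inside $\calU(H)$. The point is that the relative inverse in a $*$-regular ring is characterized intrinsically by the four equations $x\overline{x}x=x$, $\overline{x}x\overline{x}=\overline{x}$, $(x\overline{x})^*=x\overline{x}$, $(\overline{x}x)^*=\overline{x}x$, which indeed hold since $x\overline{x}=\text{LP}(x)$ and $\overline{x}x=\text{RP}(x)$ are projections; and a solution of these equations is unique. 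As both candidates solve these equations in the common overring $\calU(G)$, they coincide with the unique relative inverse of $x$ in $\calU(G)$, so that $\overline{x}\in\calR(G)\cap\calU(H)=\calK$. Since $x\overline{x}x=x$, this exhibits $\calK$ as regular (in fact $*$-regular, being visibly $*$-closed). This step is essentially formal once uniqueness of the relative inverse is invoked.

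For the index bound I would argue by a dimension count, reducing the statement about $\calK$-modules to one about $\calU(H)$-modules. Write $\dim_{\calU(H)}$ for the von Neumann (Lück) dimension extended to the affiliated algebra; it is defined for all $\calU(H)$-modules with values in $[0,\infty]$, is additive on short exact sequences (hence monotone under submodules and cofinal under directed unions), assigns a module generated by $m$ elements a value $\le m$, and assigns $\bigoplus_{i\in I}\calU(H)$ the value $|I|$. Suppose $\calR(G)=\sum_{i=1}^m\calK\,r_i$ is finitely generated as a left $\calK$-module. Since $\calK\subseteq\calU(H)$, the left $\calU(H)$-submodule $M:=\sum_{i=1}^m\calU(H)\,r_i$ of $\calU(G)$ is generated by the same $m$ elements, so $\dim_{\calU(H)}M\le m$. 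On the other hand $M\supseteq\calR(G)\supseteq\C[G]$, and since $M$ is a $\calU(H)$-module it contains the $\calU(H)$-submodule $\calU(H)\cdot\C[G]$.

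The crux is then to compute $\dim_{\calU(H)}(\calU(H)\cdot\C[G])=[G:H]$. Fixing a right transversal $T$ for $H$ in $G$, so $G=\bigsqcup_{t\in T}Ht$, I would show that $\calU(H)\cdot\C[G]=\bigoplus_{t\in T}\calU(H)\,t$ is free as a left $\calU(H)$-module with basis $T$. Spanning is immediate since each $g=ht\in G$ satisfies $g\in\calU(H)\,t$. Linear independence is the delicate point: from a relation $\sum_{t}a_t\,t=0$ with $a_t\in\calU(H)$, I would first clear denominators using the Ore property of $\calN(H)$ to reduce to the case $a_t\in\calN(H)$, and then read off $a_{t_0}=0$ by comparing Fourier coefficients, noting that each $a_t\,t$ is supported on the coset $Ht$ and that $\langle(\sum_t a_t t)\delta_e,\delta_{ht_0}\rangle=\langle a_{t_0}\delta_e,\delta_h\rangle$ because $a_{t_0}\in\calN(H)$ commutes with right translation by $t_0$. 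Granting freeness, $\dim_{\calU(H)}(\calU(H)\cdot\C[G])=|T|=[G:H]$, and monotonicity yields $[G:H]\le\dim_{\calU(H)}M\le m<\infty$.

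The main obstacle is precisely this last freeness and dimension computation: setting up a dimension function over $\calU(H)$ with the required formal properties (citing the standard extension of Lück's dimension to the affiliated algebra, via flatness of $\calU(H)$ over $\calN(H)$) and, above all, proving linear independence of a transversal over $\calU(H)$ while handling unbounded affiliated operators through the Ore reduction to $\calN(H)$. By contrast, I do not expect to need the semisimplicity hypothesis for either conclusion beyond guaranteeing that $\calR(G)$ is a well-behaved ring; its role is in the surrounding argument of Theorem~\ref{theorem-finite.virtually.abelian} rather than in this lemma.
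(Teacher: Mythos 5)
Your proposal is correct, and while the first half coincides with the paper's argument, the second half takes a genuinely different route. For the regularity of $\calK$ you and the paper do essentially the same thing: both proofs rest on the uniqueness of the relative inverse of $x$ inside $\calU(G)$. The paper identifies the left and right projections of $x$ computed in $\calR(G)$, $\calU(H)$ and $\calU(G)$ and concludes that the three relative inverses coincide, whereas you invoke the Moore--Penrose characterization by the four equations $x\overline{x}x=x$, $\overline{x}x\overline{x}=\overline{x}$, $(x\overline{x})^*=x\overline{x}$, $(\overline{x}x)^*=\overline{x}x$; this is the same argument in different bookkeeping. For the index bound, however, the paper argues by contradiction with homological algebra: assuming $[G:H]=+\infty$, it lifts the map $\calK^{n+1}\to\calR(G)$ determined by $n+1$ transversal elements through the surjection $\calK^{n}\twoheadrightarrow\calR(G)$ (projectivity of free modules), tensors with $\calR(G)$ using that regular rings are absolutely flat, and uses semisimplicity of $\calR(G)$ to rule out an injection $\calR(G)^{n+1}\hookrightarrow\calR(G)^{n}$; the resulting nontrivial $\calK$-relation $\sum_i s_iu_i=0$ is then killed by the pairwise orthogonality of the coset spaces $s_i l^2(H)$ inside $l^2(G)$, with unboundedness handled via essentially dense domains. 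You instead run a direct dimension count over $\calU(H)$ using L\"uck's extended dimension function for modules over the algebra of affiliated operators, and the only nontrivial input is the freeness of $\sum_{t\in T}\calU(H)t$, which you prove by the same coset-orthogonality idea but with unboundedness handled by Ore common denominators reducing to $\calN(H)$ --- a clean and correct alternative to the essential-density argument. So the two proofs share their analytic core (linear independence of a transversal over $\calU(H)$, equivalently over $\calK$) but differ in the algebraic superstructure: the paper's version is more elementary, needing only flatness of regular rings and basic module theory, but it genuinely uses the semisimplicity hypothesis; yours imports the dimension machinery (existence, additivity, cofinality of $\dim_{\calU(H)}$, citable from \cite{Luck}) and in exchange, as you correctly observe, dispenses with semisimplicity altogether, proving the index bound for any subring of $\calU(G)$ containing $\C[G]$ that is finitely generated over $\calK$, with the same quantitative bound $[G:H]\le m$. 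One cosmetic remark: the paper treats $\calR(G)$ as a right $\calK$-module and you as a left one; this is immaterial, as your argument adapts verbatim with a left transversal, or by applying the involution.
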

\begin{proof}
First, note that $\calK$ can be considered as a subring of $\calU(G)$ because of the embedding $\calU(H) \hookrightarrow \calU(G)$. Also, note that all three rings $\calR(G), \calU(H)$ and $\calU(G)$ are $*$-regular.

Let us first prove that $\calK$ is regular. Let $x \in \calK$ be given. Since $x \in \calR(G) \subseteq \calU(G)$ and $x \in \calU(H)$, there exist unique projections $e_1,f_1 \in \calR(G)$, $e_2,f_2 \in \calU(G)$ and $e_3,f_3 \in \calU(H)$ such that
$$x \calR(G) = e_1 \calR(G), \quad x \calU(G) = e_2 \calU(G), \quad x \calU(H) = e_3 \calU(H)$$
and
$$\calR(G) x = \calR(G) f_1, \quad \calU(G) x = \calU(G) f_2, \quad \calU(H) x = \calU(H) f_3,$$
and moreover there exist unique elements $\overline{x}_1 \in \calR(G), \overline{x}_2 \in \calU(G)$ and $\overline{x}_3 \in \calU(H)$ such that $x \overline{x}_i = e_i$ and $\overline{x}_i x = f_i$ for all $i=1,2,3$. In particular, note that $e_ix = x = x f_i$ for all $i=1,2,3$. But now we have
$$e_1 \calU(G) = x \overline{x}_1 \calU(G) \subseteq x \calU(G) = e_1 x \calU(G) \subseteq e_1 \calU(G),$$
$$e_3 \calU(G) = x \overline{x}_3 \calU(G) \subseteq x \calU(G) = e_3 x \calU(G) \subseteq e_3 \calU(G),$$
and so $e_1 \calU(G) = e_3 \calU(G) = x \calU(G) = e_2 \calU(G)$.By uniqueness, $e := e_1 = e_2 = e_3$. Similarly, $f := f_1 = f_2 = f_3$. But now by uniqueness of the elements $\overline{x}_i$ we also conclude that $\overline{x} := \overline{x}_1 = \overline{x}_2 = \overline{x}_3 \in \calR(G) \cap \calU(H) = \calK$. This proves that $\calK$ is a regular ring, since
$$x \overline{x} x = e x = x.$$

It is clear that $\calR(G)$ is a (right) $\calK$-module. Suppose now that $\calR(G)$ is generated by $n$ elements as a $\calK$-module, say by $\{r_1,...,r_n\} \subseteq \calR(G)$. This gives us a surjective $\calK$-module map
$$\alpha \colon \calK^n \twoheadrightarrow \calR(G), \quad (u_1,...,u_n) \mapsto \sum_{i=1}^n r_i u_i.$$
Suppose, by way of contradiction, that $[G:H] = +\infty$, and take $S = \{s_1,...,s_{n+1}\}$ a set of transversals of $H$ consisting of $n+1$ elements. This set gives us a $\calK$-module map
$$\beta \colon \calK^{n+1} \ra \calR(G), \quad (u_1,...,u_{n+1}) \mapsto \sum_{i=1}^{n+1} s_i u_i.$$
But since free modules are projective, we can lift $\beta$ to a $\calK$-module map $\gamma \colon \calK^{n+1} \to \calK$ such that

\[
\begin{tikzcd}
& \calK^{n+1} \arrow[dl,dotted,swap,"\gamma"] \arrow[d,"\beta"] \\
\calK^n \arrow[r,two heads,"\alpha"] & \calR(G) 
\end{tikzcd}
\]

If it were the case that $\beta$ is injective, then so would be $\gamma$. But regular rings are absolutely flat, so tensoring $\gamma$ with $\text{id}_{\calR(G)}$ would yield an injective $\calR(G)$-module map
$$\gamma \otimes \text{id}_{\calR(G)} \colon \calR(G)^{n+1} \to \calR(G)^n.$$
This is impossible as $\calR(G)$ is a semisimple ring, hence Artinian.

We conclude that $\beta$ is not injective, which means that there exist elements $u_1,...,u_{n+1} \in \calK$ not all zero such that
$$u := \sum_{i=1}^{n+1} s_i u_i = 0.$$
We now think of each $u_i \in \calK \subseteq \calU(H)$ as a densely defined closed operator $u_i \colon D_i \to l^2(H)$ commuting with the right action of $H$, with $D_i \subseteq l^2(H)$ dense in $l^2(H)$ and invariant under the right action of $H$. The operator $u \colon D \to l^2(H)$ is defined on $D := \bigcap_{i=1}^{n+1} D_i$, which is dense in $l^2(H)$ because each $D_i$ is essentially dense (see \cite[Lemma 8.3]{Luck}). Now for each $l \in D$, we compute
$$0 = u(l) = \sum_{i=1}^{n+1} s_i u_i(l),$$
with each $s_i u_i(l) \in s_i l^2(H) \subseteq l^2(G)$. But since $S = \{s_1,...,s_{n+1}\}$ is a set of transversals of $H$ in $G$, the Hilbert subspaces $s_1 l^2(H),...,s_{n+1} l^2(H)$ are pairwise orthogonal. 
We thus deduce that each $s_i u_i(l) = 0$, that is $u_i(l) = 0$. We conclude that each operator $u_i$ is zero on the dense subspace $D$. Since each $u_i$ is closed, this implies that $u_i \equiv 0$. This contradicts the fact that at least one of the elements $u_1,...,u_{n+1}$ is non-zero. Therefore necessarily $[G:H] < +\infty$, and in fact we have proved the more stronger result $[G:H] \leq n$.
\end{proof}

\begin{lemma}\label{lemma-group.central}
Let $G$ be a group and consider $\Delta(G)$ to be the subgroup of $G$ consisting of all elements whose conjugacy class in $G$ is finite. Then
$$\emph{Z}(\calU(G)) \subseteq \calU(\Delta(G)).$$
\end{lemma}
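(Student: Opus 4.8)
My plan is to reduce the statement to its classical bounded analogue $\mathrm{Z}(\calN(G)) \subseteq \calN(\Delta(G))$ and then lift it to unbounded affiliated operators by spectral calculus. Write $\Delta := \Delta(G)$; it is a normal subgroup of $G$, so under the natural embeddings recorded in the preliminaries we have $\calN(\Delta) \subseteq \calN(G)$ and $\calU(\Delta) \subseteq \calU(G)$, with $\calN(\Delta)$ realized inside $\calN(G)$ as the set of elements whose Fourier support lies in $\Delta$.

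First I would settle the bounded case. For $T \in \calN(G)$ set $\hat T(g) := \tau(T u_g^{-1})$, where $u_g$ denotes the unitary associated to $g$ and $\tau$ is the canonical trace; then $\sum_{g} |\hat T(g)|^2 = \|T\delta_1\|^2 < \infty$. If $T$ is central it commutes with every $u_h$, so $u_h T u_h^{-1} = T$ forces $\hat T$ to be constant along each conjugacy class of $G$. Square-summability then forces $\hat T$ to vanish on every infinite conjugacy class, i.e. $\operatorname{supp}(\hat T) \subseteq \Delta$, which says precisely that $T \in \calN(\Delta)$. This proves $\mathrm{Z}(\calN(G)) \subseteq \calN(\Delta)$.

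Now take $T \in \mathrm{Z}(\calU(G))$ and write its polar decomposition $T = v|T|$, with $v \in \calN(G)$ a partial isometry and $|T| = (T^*T)^{1/2}$ positive self-adjoint affiliated to $\calN(G)$. Since each $u_g$ is unitary and central for $T$, conjugation by $u_g$ fixes $T$, hence fixes $|T|$ (conjugation by a unitary commutes with taking the modulus) and, by uniqueness of the polar decomposition, fixes $v$ as well. Thus $v$ commutes with every $u_g$, so $v \in \mathrm{Z}(\calN(G)) \subseteq \calN(\Delta)$ by the bounded case; likewise every spectral projection $\chi_{[0,\lambda]}(T^*T)$ commutes with every $u_g$ (because $u_g T^*T u_g^{-1} = T^*T$) and so lies in $\mathrm{Z}(\calN(G)) \subseteq \calN(\Delta)$. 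Because all the spectral projections of $|T|$ lie in $\calN(\Delta)$, the operator $|T|$ is affiliated to $\calN(\Delta)$, that is $|T| \in \calU(\Delta)$; combined with $v \in \calN(\Delta)$ this yields $T = v|T| \in \calU(\Delta) = \calU(\Delta(G))$.

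The step I expect to require the most care is the bounded-to-unbounded passage in the last paragraph: one must verify rigorously that conjugation by $u_g$ commutes with the polar decomposition and with the Borel functional calculus of the unbounded affiliated operator $T^*T$, and that an operator all of whose spectral projections lie in the subalgebra $\calN(\Delta)$ is genuinely the image of an element of $\calU(\Delta)$ under the embedding $\calU(\Delta) \hookrightarrow \calU(G)$. Both rest on standard von Neumann algebra facts (uniqueness of polar decomposition and the characterization of affiliation through spectral projections), but they carry the analytic content of the lemma; the purely combinatorial input is confined to the square-summability argument of the bounded case.
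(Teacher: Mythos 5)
Your proposal is correct and follows essentially the same route as the paper: both reduce the statement to the bounded inclusion $\mathrm{Z}(\calN(G)) \subseteq \calN(\Delta(G))$, proved by the same square-summability argument on conjugacy classes, and both handle the passage from $\calU(G)$ to $\calN(G)$ via polar decomposition and the spectral theorem. The only cosmetic difference is that the paper packages the reduction as writing a central $x \in \calU(G)$ as $ab^{-1}$ with $a,b \in \mathrm{Z}(\calN(G))$, whereas you reassemble $T = v|T|$ from the partial isometry and the spectral projections directly; these are the same argument.
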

\begin{proof}
Take $x \in \text{Z}(\calU(G))$, so we can write it as $x = ab^{-1}$ for some $a,b \in \calN(G)$. In fact, by using the polar decomposition and the spectral theorem for elements of $\calU(G)$, we can even take $a,b$ to be in the center of $\calN(G)$. Thus in order to prove the lemma it is enough to show the inclusion $\text{Z}(\calN(G)) \subseteq \calN(\Delta(G))$. For this, we will use the identification of $\calN(G)$ as a subspace of $l^2(G)$ given by
$$\calN(G) \hookrightarrow l^2(G), \quad T \mapsto T(1).$$
So take $T \in \text{Z}(\calN(G))$ and write it as $T = \sum_{g \in G}\lambda_g g \in l^2(G)$. In particular $T$ commutes with every $h \in G$, and so we deduce that $\lambda_{hgh^{-1}} = \lambda_g$ for all $g,h \in G$. So if $g$ has infinite conjugacy class in $G$, the $l^2$-summability of $T$ implies that necessarily $\lambda_g = 0$. This shows that $T$ is supported on elements in $\Delta(G)$, which is what we wanted to prove.
\end{proof}

We are now ready to prove Theorem \ref{theorem-finite.virtually.abelian}.

\begin{proof}[Proof of Theorem \ref{theorem-finite.virtually.abelian}]
Write $\calZ := \text{Z}(\calR(\Gamma))$ for the center of $\calR(\Gamma)$. Let $\Delta(\Gamma)$ be the subgroup of $\Gamma$ consisting of all elements $g \in G$ whose conjugacy class in $\Gamma$ is finite. Consider
$$\calK := \calR(\Gamma) \cap \calU(\Delta(\Gamma)).$$
We claim that $\calZ \subseteq \calK$. For this, let $z \in \calZ = \text{Z}(\calR(\Gamma))$. In particular, $z$ commutes with every $g \in \Gamma$, and hence it commutes with every element of $\calN(\Gamma)$. Since $\calU(\Gamma)$ is the classical ring of quotients of $\calN(\Gamma)$, we conclude that $z$ also commutes with every element of $\calU(\Gamma)$. This shows that
$$\calZ \subseteq \calR(\Gamma) \cap \text{Z}(\calU(\Gamma)).$$
The claim follows due to Lemma \ref{lemma-group.central}.

Now since $\calR(\Gamma)$ is finitely generated as a $\calZ$-module, it is also finitely generated as a $\calK$-module since $\calZ \subseteq \calK$. Thus Lemma \ref{lemma-preliminary.vNregular} tells us that $[\Gamma:\Delta(\Gamma)] < +\infty$. In particular, since $\Gamma$ is finitely generated and $\Delta(\Gamma)$ is a subgroup of finite index, then so is $\Delta(\Gamma)$. Pick $S = \{g_1,..,g_r\}$ to be a finite generating subset of $\Delta(\Gamma)$, and consider the centralizer of $S$ in $\Delta(\Gamma)$, that is
$$C_{\Delta(\Gamma)}(S) = \{h \in \Delta(\Gamma) \mid hgh^{-1} = g \text{ for all } g \in S\}.$$
Note that
$$C_{\Delta(\Gamma)}(S) = \bigcap_{i=1}^r C_{\Delta(\Gamma)}(\{g_i\}).$$
For each $g_i \in S$, since its conjugacy class is finite, say with $n_i \in \N$ elements, we can pick $h_{i1},...,h_{in_i} \in \Gamma$ such that the conjugacy class of $g_i$ consists of the elements
$$\{h_{i1}g_ih_{i1}^{-1},...,h_{in_i}g_ih_{in_i}^{-1}\}.$$
In this case, the set $J_i := \{h_{i1},...,h_{in_i}\}$ contains a transversal of $C_{\Delta(\Gamma)}(\{g_i\})$ in $\Delta(\Gamma)$. 
In particular, $[\Delta(\Gamma):C_{\Delta(\Gamma)}(\{g_i\})] < +\infty$. Since the intersection of finite index subgroups is a finite index subgroup, we conclude that
$$[\Delta(\Gamma):C_{\Delta(\Gamma)}(S)] < +\infty.$$
But being $S$ a generating set for $\Delta(\Gamma)$ implies that the subgroup $C_{\Delta(\Gamma)}(S)$ is central, and therefore abelian. Finally,$$[\Gamma:C_{\Delta(\Gamma)}(S)] = [\Gamma:\Delta(\Gamma)] \cdot [\Delta(\Gamma):C_{\Delta(\Gamma)}(S)] < +\infty,$$
and so $C_{\Delta(\Gamma)}(S)$ is an abelian subgroup of $\Gamma$ with finite index. This proves precisely that $\Gamma$ is virtually abelian, as required.
\end{proof}

We can finally prove the other implication of Theorem \ref{theorem-characterization.virtually.abelian}.

\begin{proof}[Proof of $2) \implies 1)$ of Theorem \ref{theorem-characterization.virtually.abelian}]
This is a special case of Theorem \ref{theorem-finite.virtually.abelian}.
\end{proof}

\section{On the existence of units in group rings of virtually abelian groups}\label{section-general.procedure}

In this section we present a general procedure which can help in the understanding of units in group rings of virtually abelian groups. This procedure will be applied to the case of the Promislow's group in Section \ref{section-criterion.promislow}. We will follow the same notation as in Section \ref{subsection-virtually.central}. Since we have already showed that $\calQ$ is a central division algebra, the following result from the theory of central division algebras will prove useful for us in the sequel.

\begin{theorem}\label{theorem-central.division.algebras}
Let $\calR$ be a division algebra, finite-dimensional over its center $\emph{Z}(\calR)$. The following statements hold.
\begin{enumerate}[(1),leftmargin=0.7cm]
\item $[\calR : \emph{Z}(\calR)] = n^2$ for some $n \in \N$. Furthermore, if $\calK \subseteq \calR$ is a maximal subfield, then $[\calK : \emph{Z}(\calR)] = n$.
\item (Skolem-Noether) Let $\calA_1,\calA_2 \subseteq \calR$ be two isomorphic $\emph{Z}(\calR)$-subalgebras. Then for any $\emph{Z}(\calR)$-algebra homomorphism $\phi \colon \calA_1 \to \calA_2$ there is an element $b \in \calR$ such that $\phi$ is given by conjugation by $b$. In symbols,
$$\phi(a) = a^b$$
for all $a \in \calA_1$.
\end{enumerate}
\end{theorem}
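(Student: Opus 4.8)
The statement is a classical pair of facts from the theory of central simple algebras, so my plan is to reconstruct the two standard arguments, using that $\calR$, being a division algebra, is \emph{central simple} over $Z := \mathrm{Z}(\calR)$ (its only two-sided ideals are $0$ and $\calR$). For the first dimension formula I would extend scalars to an algebraic closure $\overline{Z}$ of $Z$ and consider $\calR \otimes_Z \overline{Z}$. Extension of scalars of a central simple algebra along a field extension is again central simple, and over the algebraically closed field $\overline{Z}$ the only finite-dimensional division algebra is $\overline{Z}$ itself; hence by Wedderburn's structure theorem $\calR \otimes_Z \overline{Z} \cong M_n(\overline{Z})$ for some $n \in \N$. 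Comparing $Z$- and $\overline{Z}$-dimensions then gives $[\calR:Z] = \dim_{\overline{Z}}(\calR \otimes_Z \overline{Z}) = n^2$.

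For the maximal subfield claim I would first show that a maximal subfield $\calK$ is its own centralizer. If $a \in \calR$ commutes with every element of $\calK$, then $\calK[a]$ is a commutative subring of the division algebra $\calR$, hence a subfield containing $\calK$; maximality forces $a \in \calK$, so $C_{\calR}(\calK) = \calK$. I would then invoke the double centralizer theorem for central simple algebras, which yields $[\calK:Z]\cdot[C_{\calR}(\calK):Z] = [\calR:Z] = n^2$. Since $C_{\calR}(\calK) = \calK$, this reads $[\calK:Z]^2 = n^2$, and therefore $[\calK:Z] = n$.

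For Skolem--Noether I would argue via module theory. First observe that any finite-dimensional $Z$-subalgebra of $\calR$ is a domain that is finite-dimensional over $Z$, hence a division algebra, in particular simple; so $\calA_1$ is simple. Form $B := \calR \otimes_Z \calA_1^{\mathrm{op}}$, which is a tensor product of a central simple algebra with a simple algebra and is therefore simple and finite-dimensional, hence simple Artinian. Now make $\calR$ into a left $B$-module in two ways: in both, $\calR$ acts by left multiplication, while $a^{\mathrm{op}} \in \calA_1^{\mathrm{op}}$ acts by right multiplication by the inclusion image of $a$ in the first structure $M_1$ and by right multiplication by $\phi(a)$ in the second structure $M_2$ (one checks these are module actions using that $\phi$ is a $Z$-algebra homomorphism). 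Both modules have the same $Z$-dimension $[\calR:Z]$, and since $B$ is simple Artinian it has a unique simple module up to isomorphism, so finite-dimensional $B$-modules are classified by their $Z$-dimension; hence $M_1 \cong M_2$. Such an isomorphism $\psi$ is in particular left $\calR$-linear, so $\psi(x) = x\,b$ for $b := \psi(1)$, and $b$ is invertible since $\psi$ is bijective and $\calR$ is a division algebra. Intertwining the $\calA_1^{\mathrm{op}}$-actions, $\psi(xa) = \psi(x)\phi(a)$, and taking $x = 1$ gives $ab = b\,\phi(a)$, i.e. $\phi(a) = b^{-1} a b$ for all $a \in \calA_1$; writing $c := b^{-1}$ this is $\phi(a) = a^{c}$, exhibiting $\phi$ as conjugation in the paper's convention.

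The real content of the argument lies in the three imported structural inputs: preservation of central simplicity under scalar extension together with Wedderburn's theorem (for the square dimension), the double centralizer theorem (for the maximal subfield), and the classification of finite-length modules over a simple Artinian ring (for Skolem--Noether). Granting these, the proof is short. The only genuine care points are verifying that the subalgebras in question are actually simple---automatic here because a finite-dimensional subalgebra of a division algebra is itself a division algebra---and setting up the two \emph{commuting} module structures on $\calR$ so that the intertwiner is forced to be right multiplication by a single invertible element; once that is done, matching the two actions of $\calA_1$ pins down the conjugator.
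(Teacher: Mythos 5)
Your proposal is correct, but it is not parallel to anything in the paper: the paper does not prove this theorem at all, it simply cites the literature (``Part (1) follows from [CentralAlgebras, Theorem 3.10 and Corollary 3.17]; Part (2) is a special case of [CentralAlgebras, Theorem 3.14]''). What you have written is a faithful reconstruction of the standard textbook arguments that such a reference contains: scalar extension to $\overline{Z}$ plus Wedderburn's theorem for $[\calR:\mathrm{Z}(\calR)] = n^2$, the double centralizer theorem combined with the observation that a maximal subfield is its own centralizer for $[\calK:\mathrm{Z}(\calR)] = n$, and the two-module-structures argument over $B = \calR \otimes_{Z} \calA_1^{\mathrm{op}}$ for Skolem--Noether. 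All three steps are sound, and you were careful at the two places where carelessness would hurt: (a) concluding that $\calK[a]$ (and similarly $\calA_1$) is a field/division algebra does require finite-dimensionality over $Z$, which you invoke implicitly via finite-dimensionality of $\calR$ and should perhaps state once explicitly; (b) you correctly matched the paper's conjugation convention $a^{b} = b a b^{-1}$ by setting $c = b^{-1}$ at the end, since the module argument naturally produces $\phi(a) = b^{-1} a b$. The trade-off between the two treatments is the obvious one: the paper's citation buys brevity for facts that are genuinely classical and tangential to its main results, while your version makes the paper self-contained at the cost of importing (and having to trust) three structural inputs --- preservation of central simplicity under base change, the double centralizer theorem, and the classification of finite-length modules over a simple Artinian ring --- each of which is itself a nontrivial theorem, so the gain in self-containment is partly illusory unless those are proved too.
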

\begin{proof}
Part (1) follows from \cite[Theorem 3.10 and Corollary 3.17]{CentralAlgebras}. Part (2) is a special case of \cite[Theorem 3.14]{CentralAlgebras}.
\end{proof}

An immediate observation, due to Theorem \ref{theorem-central.division.algebras} (1) and Proposition \ref{proposition-galois} (ii), is that $\calL \subseteq \calQ$ is a maximal subfield of $\calQ$ which contains its center $\calZ = \calL^H$.

We now expose the key idea on constructing units in $k[\Gamma]$. Let $p(x) \in \text{Z}(k[\Gamma])[x]$ be a polynomial with coefficients in the ring $\text{Z}(k[\Gamma]) \subseteq \calZ$, and assume that $1-p(x)$ is \textit{not} irreducible over $\text{Z}(k[\Gamma])$, so we can write
$$1-p(x) = q_1(x) q_2(x)$$
for some non-trivial polynomials $q_1(x),q_2(x) \in \text{Z}(k[\Gamma])[x]$. We then observe that, if $T \in k[\Gamma]$ is a root of the polynomial $p(x)$, then
$$1 = q_1(T) q_2(T),$$
so $q_1(T)$ is a unit in $k[\Gamma]$, which can be either trivial or non-trivial.

We will concentrate in polynomials of the form
$$P_{\kappa,d}(x) = x^d - \kappa^d + 1$$
for some $\kappa \in \text{Z}(k[\Gamma])$ and $d \in \N$, for which in this case
$$1 - P_{\kappa,d}(x) = \kappa^d -x^d = (\kappa - x)(x^{d-1} + \kappa x^{d-2} + \cdots + \kappa^{d-2} x + \kappa^{d-1}).$$

\begin{definition}\label{definition-conjugate}
Let $\omega \in k[\Gamma]$ be a root of $P_{\kappa,d}(x)$. An element $T \in \calQ$ is said to be an \emph{$\omega$-conjugate of degree $d$} if $T^d = \omega^d$. We call such a $T$ \emph{trivial} if it is of the form $\xi \omega$ where $\xi \in k$ is a $d^{\text{th}}$ root of unity. If $T$ is not trivial, we say that it is \emph{non-trivial}.
\end{definition}

For $\omega \neq 0$, Theorem \ref{theorem-central.division.algebras} (2) implies that every $\omega$-conjugate is conjugate, in $\calQ$, to $\omega$. Hence the above definition is consistent.

The following proposition summarizes the previous idea on finding units in $k[\Gamma]$.

\begin{proposition}\label{proposition-general.criterion}
Let $\omega \in k[\Gamma]$ be a non-zero root of $P_{\kappa,d}(x)$, and let $T \in \calQ$ be an $\omega$-conjugate of degree $d$.
\begin{enumerate}[(i),leftmargin=0.7cm]
\item The element $U_T := \kappa - T$ is a unit in $\calQ$, with inverse $U_T^{-1} = T^{d-1} + \kappa T^{d-2} + \cdots + \kappa^{d-2} T + \kappa^{d-1}$.
\item $T \in k[\Gamma]$ if and only if $U_T \in k[\Gamma]$, in which case $U_T^{-1}$ also belongs in $k[\Gamma]$.
\end{enumerate}
Conversely, let $U \in \calQ \backslash \{0\}$, and define $T_U := \omega^U = U \omega U^{-1}$.
\begin{enumerate}[(i),leftmargin=0.7cm]
\item The element $T_U$ is an $\omega$-conjugate of degree $d$.
\item If moreover $U,U^{-1} \in k[\Gamma]$, then $T_U \in k[\Gamma]$.
\end{enumerate}
\end{proposition}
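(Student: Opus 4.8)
The plan is to reduce everything to the polynomial factorization
$$\kappa^d - x^d = (\kappa - x)(x^{d-1} + \kappa x^{d-2} + \cdots + \kappa^{d-2}x + \kappa^{d-1})$$
displayed just above the statement, together with the single structural fact that $\kappa \in \mathrm{Z}(k[\Gamma]) \subseteq \calZ$ is central in $\calQ$. Centrality of $\kappa$ is what allows me to treat $\kappa$ and an arbitrary element $T$ as commuting, so that the identity above—valid in any commutative ring—may be evaluated at $x = T$ inside $\calQ$.

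For the direct implication (i), I would set $V := T^{d-1} + \kappa T^{d-2} + \cdots + \kappa^{d-1}$ and evaluate the factorization at $x = T$: since $\kappa$ commutes with $T$, I obtain $(\kappa - T)V = V(\kappa - T) = \kappa^d - T^d$, the product holding in both orders. Now $T$ being an $\omega$-conjugate of degree $d$ gives $T^d = \omega^d$, and $\omega$ being a root of $P_{\kappa,d}$ gives $\omega^d = \kappa^d - 1$; hence $\kappa^d - T^d = 1$ and $U_T = \kappa - T$ is a two-sided unit in $\calQ$ with inverse $V$, as claimed. For (ii), the identities $U_T = \kappa - T$ and $T = \kappa - U_T$ together with $\kappa \in k[\Gamma]$ show at once that $T \in k[\Gamma]$ if and only if $U_T \in k[\Gamma]$; and when this holds, $U_T^{-1} = V$ is a polynomial in the two elements $\kappa, T \in k[\Gamma]$, so it lies in $k[\Gamma]$ as well.

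For the converse, the element $T_U = \omega^U = U\omega U^{-1}$ satisfies $T_U^d = (U\omega U^{-1})^d = U\omega^d U^{-1}$ after telescoping the intermediate factors $U^{-1}U$. Because $\omega^d = \kappa^d - 1$ is central in $\calQ$, it is fixed under conjugation by $U$, so $T_U^d = \omega^d$ and $T_U$ is an $\omega$-conjugate of degree $d$, proving (i). Statement (ii) is then immediate: if $U, U^{-1} \in k[\Gamma]$, then $T_U = U\omega U^{-1}$ is a product of three elements of $k[\Gamma]$ and hence lies in $k[\Gamma]$.

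There is no genuine obstacle here; the proposition is essentially a direct translation of the displayed factorization into the division algebra $\calQ$. The only point demanding care is invoking the centrality of $\kappa$ (and of $\omega^d = \kappa^d - 1$) at exactly the right places: once to justify evaluating the commutative factorization at $T$, once more to upgrade a one-sided identity to a genuinely two-sided inverse, and finally to see that $\omega^d$ is left unchanged by conjugation by $U$.
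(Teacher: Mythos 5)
Your proof is correct and follows essentially the same route as the paper's: both verify $U_T U_T^{-1} = \kappa^d - T^d = \kappa^d - \omega^d = 1$ via the displayed factorization (using that $\kappa$ is central so the inverse is two-sided), deduce the $k[\Gamma]$-membership statements from $U_T = \kappa - T$ and the polynomial formula for $U_T^{-1}$, and for the converse compute $T_U^d = U\omega^d U^{-1} = \omega^d$ using that $\omega^d = \kappa^d - 1 \in \calZ$. Your explicit flagging of the three places where centrality is invoked is a slight expositional refinement of the paper's argument, not a different approach.
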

\begin{proof}
The first part is a simple computation:
$$U_T U_T^{-1} = (\kappa - T)(T^{d-1} + \kappa T^{d-2} + \cdots + \kappa^{d-2} T + \kappa^{d-1}) = \kappa^d - T^d = \kappa^d - \omega^d = 1.$$
Similarly $U_T^{-1} U_T = 1$. Moreover, from the definition $U_T = \kappa - T$ it is clear that $T \in k[\Gamma]$ if and only if $U_T \in k[\Gamma]$, in which case
$$U_T^{-1} = T^{d-1} + \kappa T^{d-2} + \cdots + \kappa^{d-2} T + \kappa^{d-1} \in k[\Gamma]$$
as well.

Conversely, let $U \in \calQ \backslash \{0\}$ and define $T_U := \omega^U$. We compute
$$T_U^d = U \omega^d U^{-1} = \omega^d$$
as $\omega^d = \kappa^d - 1 \in \calZ$. Hence $T_U$ is an $\omega$-conjugate of degree $d$. Since $\omega \in k[\Gamma]$, it is clear that if $U,U^{-1} \in k[\Gamma]$ then $T_U \in k[\Gamma]$ as well.
\end{proof}

Proposition \ref{proposition-general.criterion} does not give conditions about the non-triviality of the units $U_T,U_T^{-1}$ in case they belong to $k[\Gamma]$. Nevertheless, we will show in the next section that, in case $\Gamma$ is the Promislow's group, the non-triviality of $U_T, U_T^{-1} \in k[\Gamma]$ is equivalent to the non-triviality of the $\omega$-conjugate $T$, and analogously for the non-triviality of $T_U$ in terms of the non-triviality of $U,U^{-1} \in k[\Gamma]$.

\section{The case of the Promislow's group}\label{section-criterion.promislow}

For a group $\Gamma$, it will be convenient for us to denote by $\ol{g}$ the inverse of $g \in \Gamma$.

In this section we will apply the general procedure displayed in Section \ref{section-general.procedure} to the case of the Promislow's group. This group, which will be denoted by $G$, is given by the presentation
$$G = \langle s,t \mid t s^2 \ol{t} = \ol{s}^2, s t^2 \ol{s} = \ol{t}^2 \rangle.$$
We define $u := st$ together with $x := s^2, y := t^2$ and $z := u^2$. We can then write the above presentation of the group as
$$G = \langle s,t,x,y \mid x^t = \ol{x}, y^s = \ol{y}, x=s^2, y=t^2 \rangle.$$
Using this, it is clear that $G$ is given by the amalgam $G_1 \ast_H G_2$, with $G_1 = \langle t,x \mid x^t = \ol{x} \rangle$, $G_2 = \langle s,y \mid y^s = \ol{y} \rangle$ and $H = \langle x,t^2 \rangle = \langle s^2,y \rangle$. We note that $H \cong \Z^2$ and $G_1 \cong G_2 \cong \Z \rtimes \Z$, where the automorphism implementing the crossed product is the inversion automorphism. This shows that $G$ is a torsion-free group.

Another important property of $G$ is that it fits in a non-split short exact sequence
$$1 \ra \Z^3 \ra G \ra \Z/2 \Z \times \Z/2 \Z \ra 1,$$
where $\Z^3$ is identified with the maximal abelian normal subgroup $N = \langle x,y,z \rangle$, and $F = G/N =\{N,Ns,Nt,Nu\} \cong \Z/2\Z \times \Z/2 \Z$. In particular, $G$ is a virtually abelian group. The natural action of $F$ on $N$ by conjugation is explicitly described in Table \ref{table-conjugation.group} below.

\begin{table}[h]
\centering
\begin{tabular}{c|c|c|c|}
\cline{2-4}
conj & $x$ & $y$ & $z$ \\
\hline
\hline
\multicolumn{1}{|c||}{$N$} & $x$ & $y$ & $z$ \\
\hline
\multicolumn{1}{|c||}{$Ns$} & $x$ & $\ol{y}$ & $\ol{z}$ \\
\hline
\multicolumn{1}{|c||}{$Nt$} & $\ol{x}$ & $y$ & $\ol{z}$ \\
\hline
\multicolumn{1}{|c||}{$Nu$} & $\ol{x}$ & $\ol{y}$ & $z$ \\
\hline
\end{tabular}
\caption{The first column elements act on the first row elements by conjugation.}
\label{table-conjugation.group}
\end{table}

We will fix the right transversal of $N$ in $G$ to be $\tilde{F} = \{1,s,t,u\}$. The associated $2$-cocycle is explicitly written down in Table \ref{table-2.cocycle}.

\begin{table}[h]
\centering
\begin{tabular}{c|c|c|c|c|}
\cline{2-5}
$\tau(a,b)$ & $1$ & $s$ & $t$ & $u$ \\
\hline
\hline
\multicolumn{1}{|c||}{$1$} & $1$ & $1$ & $1$ & $1$ \\
\hline
\multicolumn{1}{|c||}{$s$} & $1$ & $x$ & $1$ & $x$ \\
\hline
\multicolumn{1}{|c||}{$t$} & $1$ & $\bar{x} y \bar{z}$ & $y$ & $\bar{x} \bar{z}$ \\
\hline
\multicolumn{1}{|c||}{$u$} & $1$ & $\bar{y} z$ & $\bar{y}$ & $z$ \\
\hline
\end{tabular}
\caption{The first column corresponds to the different values of $a$, and the first row corresponds to the different values of $b$.}
\label{table-2.cocycle}
\end{table}

For the following discussion, we take $k$ to be any field with involution $*$ with characteristic different from $2$. For $g \in \{x,y,z\}$, we let
$$\kappa_g := \frac{1}{2}(g + \ol{g}), \text{ } \omega_g := \frac{1}{2}(g - \ol{g}) \in k[x^{\pm 1},y^{\pm 1},z^{\pm 1}].$$
We note that
\begin{equation}\label{equation-omega.kappa}
\kappa_g^2-\omega_g^2 = 1.
\end{equation}

Following Sections \ref{subsection-virtually.central} and \ref{section-general.procedure}, we denote by $\calQ$ the Ore field of fractions of $k[G]$, by $\calL$ the Ore field of fractions of $k[N]$, namely $\calL = k(x,y,z)$, and the group $F$ is identified, as a subgroup of $\text{Aut}(\calL/k)$, with $H = \{\text{id},\alpha_s,\alpha_t,\alpha_u\}$.
Apart from the automorphisms in $H$, we have three natural automorphisms of $\calL$, namely
$$\iota_x : \calL \ra \calL, \quad \iota_x(x) = \ol{x}, \iota_x(y) = y, \iota_x(z) = z;$$
$$\iota_y : \calL \ra \calL, \quad \iota_y(x) = x, \iota_y(y) = \ol{y}, \iota_y(z) = z;$$
$$\iota_z : \calL \ra \calL, \quad \iota_z(x) = x, \iota_z(y) = y, \iota_z(z) = \ol{z}.$$
That is, for $g \in \{x,y,z\}$, $\iota_g$ is the automorphism induced by the inversion map with respect to $g$. The next proposition is an extension of Proposition \ref{proposition-galois} in the particular case of the Promislow's group.

\begin{proposition}\label{proposition-galois.promislow}
The following statements hold true.
\begin{enumerate}[(i),leftmargin=0.7cm]
\item The set $\{1,s,t,u\}$ is a basis of $\calQ$ over $\calL$, where $\calL$-multiplication is given from the left. Therefore $[\calQ : \calL] = 4$.
\item When writing an element $\alpha \in \calQ$ as $\alpha = a + bs + ct + du$ with $a,b,c,d \in \calL$, we get the formulas
\begin{align*}
& a = \frac{1}{4}(\alpha + \alpha^{\omega_x} + \alpha^{\omega_y} + \alpha^{\omega_z}), \qquad bs = \frac{1}{4}(\alpha + \alpha^{\omega_x} - \alpha^{\omega_y} - \alpha^{\omega_z}), \\
& ct = \frac{1}{4}(\alpha - \alpha^{\omega_x} + \alpha^{\omega_y} - \alpha^{\omega_z}), \qquad du = \frac{1}{4}(\alpha - \alpha^{\omega_x} - \alpha^{\omega_y} + \alpha^{\omega_z}).
\end{align*}
\item The fixed field of the subgroup $H$ is exactly $k(\kappa_x,\kappa_y,\kappa_z,\xi)$, where $\xi := \omega_x\omega_y\omega_z$, which is an extension of $\calK := k(\kappa_x,\kappa_y,\kappa_z)$ of order $2$. In particular, $\calZ = \calK(\xi)$.
\item We have $[\calQ : \calZ] = 16$, and $\calL$ is a maximal subfield of $\calQ$ which contains $\calZ$.
\end{enumerate}
\end{proposition}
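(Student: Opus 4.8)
The plan is to obtain parts (i) and (iv) as quick consequences of the general results of Section \ref{subsection-virtually.central}, and to establish parts (ii) and (iii) by explicit computation with Table \ref{table-conjugation.group}. For (i) I would just apply Proposition \ref{proposition-galois} (i) with $\tilde{F} = \{1,s,t,u\}$ and $|F| = 4$. For (iv), once (iii) identifies $\calZ = \calL^H$ with $[\calL:\calZ] = |H| = 4$, Proposition \ref{proposition-galois} (ii) gives $[\calQ:\calZ] = |F|^2 = 16$; since $\calL$ is a field with $\calZ \subseteq \calL$ and $[\calQ:\calZ] = 4^2$, Theorem \ref{theorem-central.division.algebras} (1) says a maximal subfield has degree $4$ over $\calZ$, so $\calL$ is already maximal.

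For part (ii) the heart of the matter is the action of conjugation by each $\omega_g$ on the basis $\{1,s,t,u\}$. Because $\omega_g \in \calL$ and $\calL$ is commutative, conjugation by $\omega_g$ fixes $\calL$ pointwise, hence fixes the coefficients $a,b,c,d$. For $\tilde{f} \in \{s,t,u\}$ I would write $\tilde{f}\,\omega_g = \alpha_{\tilde{f}}(\omega_g)\,\tilde{f}$ and note that $\alpha_{\tilde{f}}(\omega_g) = \omega_g$ when $\alpha_{\tilde{f}}$ fixes $g$ and $\alpha_{\tilde{f}}(\omega_g) = -\omega_g$ when $\alpha_{\tilde{f}}$ inverts $g$, reading the signs directly off Table \ref{table-conjugation.group}. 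This yields $\omega_g\,\tilde{f}\,\omega_g^{-1} = \pm\tilde{f}$, and collecting the four cases gives
\[
\begin{pmatrix} \alpha \\ \alpha^{\omega_x} \\ \alpha^{\omega_y} \\ \alpha^{\omega_z} \end{pmatrix}
=
\begin{pmatrix} 1 & 1 & 1 & 1 \\ 1 & 1 & -1 & -1 \\ 1 & -1 & 1 & -1 \\ 1 & -1 & -1 & 1 \end{pmatrix}
\begin{pmatrix} a \\ bs \\ ct \\ du \end{pmatrix}.
\]
The $4\times4$ matrix is a symmetric Hadamard matrix squaring to $4I$, so its inverse is one quarter of itself; multiplying through recovers exactly the four formulas claimed in (ii).

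For part (iii) I would organize the fixed-field computation around the group $V := \langle \iota_x,\iota_y,\iota_z\rangle \cong (\Z/2)^3 \subseteq \Aut(\calL/k)$, noting that $\alpha_s = \iota_y\iota_z$, $\alpha_t = \iota_x\iota_z$, $\alpha_u = \iota_x\iota_y$, so $H$ is precisely the index-$2$ subgroup of $V$ consisting of products of an even number of inversions. Each $\kappa_g$ is fixed by all of $V$, and each variable is quadratic over $\calK$ via $x^2 - 2\kappa_x x + 1 = 0$; since $V$ acts faithfully we get $[\calL : \calL^V] = 8 = [\calL:\calK]$ and hence $\calL^V = \calK$. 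Next, $\xi = \omega_x\omega_y\omega_z$ is fixed by $H$ (each even product flips the sign of exactly two of the factors $\omega_x,\omega_y,\omega_z$) and satisfies $\xi^2 = (\kappa_x^2-1)(\kappa_y^2-1)(\kappa_z^2-1)\in\calK$ by \eqref{equation-omega.kappa}, so $\calK(\xi)\subseteq \calL^H = \calZ$ with $[\calK(\xi):\calK]\le 2$, while Proposition \ref{proposition-galois} (ii) gives $[\calL^H:\calK] = [\calL:\calK]/[\calL:\calL^H] = 8/4 = 2$.

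The step I expect to be the crux is showing that $\xi$ genuinely generates this quadratic extension rather than collapsing into $\calK$. The clean argument is to apply the odd automorphism $\iota_x \in V$, which fixes $\kappa_x,\kappa_y,\kappa_z$ (hence all of $\calK = \calL^V$) but sends $\xi \mapsto -\xi$; since $\xi \neq 0$ and $\mathrm{char}(k)\neq 2$, this forces $\xi \notin \calK$, so $[\calK(\xi):\calK] = 2 = [\calL^H:\calK]$ and therefore $\calL^H = \calK(\xi)$, i.e.\ $\calZ = k(\kappa_x,\kappa_y,\kappa_z,\xi)$. Everything else reduces to bookkeeping with Table \ref{table-conjugation.group} and the degree formulas already furnished by Proposition \ref{proposition-galois}.
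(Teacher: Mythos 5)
Your proposal is correct and follows essentially the same route as the paper: parts (i) and (iv) are quoted from Proposition \ref{proposition-galois} and Theorem \ref{theorem-central.division.algebras}, part (ii) is the same conjugation computation (the paper writes out $\alpha^{\omega_x},\alpha^{\omega_y},\alpha^{\omega_z}$ and inverts, which is your Hadamard-matrix step), and part (iii) rests on the same two facts the paper uses, namely $[\calL:\calK]=8$ together with $\iota_x(\xi)=-\xi$ forcing $\xi\notin\calK$, followed by the degree count $[\calL^H:\calK]=2$. The only cosmetic difference is that you get $[\calL:\calK]=8$ and $\calL^V=\calK$ from Artin's lemma applied to the faithful action of $V\cong(\Z/2)^3$ plus the quadratic relations $g^2-2\kappa_g g+1=0$, whereas the paper climbs the explicit tower $\calK\subseteq\calK(\omega_x)\subseteq\calK(\omega_x,\omega_y)\subseteq\calL$, using the $\iota_g$'s to show each step is exactly quadratic.
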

\begin{proof}
\begin{enumerate}[(i),leftmargin=1cm]
\item This is part (i) of Proposition \ref{proposition-galois}.

\item We conjugate $\alpha = a + bs + ct + du$ with the elements $\omega_x,\omega_y,\omega_z$ to get
$$\alpha^{\omega_x} = a + bs - ct - du,$$
$$\alpha^{\omega_y} = a - bs + ct - du,$$
$$\alpha^{\omega_z} = a - bs - ct + du.$$
From here we obtain the displayed formulas.

\item Clearly the elements $\kappa_x,\kappa_y,\kappa_z$ and $\xi = \omega_x \omega_y \omega_z$ are fixed by $H$, so $\calK(\xi) = k(\kappa_x,\kappa_y,\kappa_z,\xi) \subseteq \calL^H$.

Consider now the field extensions
$$\calK \subseteq \calK(\omega_x) \subseteq \calK(\omega_x,\omega_y) \subseteq \calK(\omega_x,\omega_y,\omega_z) = \calL.$$
Since $\omega_g^2 = \kappa_g^2 - 1 \in \calK$ for any $g \in \{x,y,z\}$, all three inclusions are of degree at most $2$. We prove that the first one is exactly $2$ by showing that $\omega_x \notin \calK$. But this is immediate, as $\iota_x(\omega_x) = -\omega_x$, and $\iota_x$ restricts to the identity over $\calK$. The other inclusions are analogously proven to be strict by using the other automorphisms $\{\iota_y,\iota_z\}$. Thus $[\calL:\calK] = 8$. On the other hand, we observe that $\calK(\xi)$ is an extension of $\calK$ of order $2$, since $\xi^2 = (\kappa_x^2-1)(\kappa_y^2-1)(\kappa_z^2-1) \in \calK$ but, for example, $\iota_x(\xi) = -\xi$, so $\xi \notin \calK$.

We now deduce that $[\calL:\calK(\xi)] = 4$. But $[\calL:\calL^H] = |H| = 4$, hence $\calL^H = \calK(\xi)$. The statement now follows from part (ii) of Proposition \ref{proposition-galois}, which tells us that $\calZ = \calL^H$.

\item This is part (ii) of Proposition \ref{proposition-galois}, together with Theorem \ref{theorem-central.division.algebras} (1).
\end{enumerate}
\end{proof}
For $g,h,j \in \{x,y,z\}$ we will denote by $\iota_{gh}$ the composition $\iota_g\iota_h$, and by $\iota_{ghj}$ the composition $\iota_g\iota_h\iota_j$. We note that conjugation by $s$ is equal to $\iota_{yz}$ on $\calL$, i.e. $\alpha_s = \iota_{yz}$. In fact, $\alpha_t = \iota_{xz}$ and $\alpha_u = \iota_{xy}$, so $H = \{\text{id},\iota_{xy},\iota_{xz},\iota_{yz}\}$ (see Table \ref{table-conjugation.group}).\\

\begin{corollary}\label{corollary-galois.promislow}
The set $\{1,\omega_x,\omega_y,\omega_z\}$ is a $\calZ$-basis for $\calL$.
\end{corollary}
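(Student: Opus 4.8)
The plan is to reduce the statement to linear independence via a dimension count. By Proposition~\ref{proposition-galois.promislow} we have $\calZ = \calL^H$ and $[\calL:\calZ] = |H| = 4$, so it suffices to show that $1, \omega_x, \omega_y, \omega_z$ are linearly independent over $\calZ$; they will then automatically constitute a $\calZ$-basis of $\calL$.

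The key observation I would record first is how these four elements transform under $H = \{\mathrm{id}, \iota_{xy}, \iota_{xz}, \iota_{yz}\}$. Since $\iota_g(\omega_g) = -\omega_g$ while $\iota_g$ fixes $\omega_h$ for $h \neq g$, relative to the ordering $(\mathrm{id}, \iota_{xy}, \iota_{xz}, \iota_{yz})$ the elements $1, \omega_x, \omega_y, \omega_z$ pick up the sign patterns $(+,+,+,+)$, $(+,-,-,+)$, $(+,-,+,-)$ and $(+,+,-,-)$ respectively. These are precisely the four distinct characters of the Klein four-group $H$, so the associated $\pm 1$ matrix is a $4\times 4$ Hadamard matrix.

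With this in hand, suppose $a_0 + a_1\omega_x + a_2\omega_y + a_3\omega_z = 0$ with $a_0, a_1, a_2, a_3 \in \calZ$. Applying each of the four automorphisms of $H$, and using that the $a_i$ are $H$-invariant, yields four equations in the quantities $a_0, a_1\omega_x, a_2\omega_y, a_3\omega_z$ whose coefficient matrix is the sign matrix built from the patterns above. Its determinant is $\pm 16$, hence it is invertible because $\mathrm{char}(k) \neq 2$. Solving forces $a_0 = a_1\omega_x = a_2\omega_y = a_3\omega_z = 0$, and since $\omega_x, \omega_y, \omega_z$ are nonzero in $\calL$ we conclude $a_0 = a_1 = a_2 = a_3 = 0$, as desired.

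I do not expect any genuine obstacle here: the argument is exactly the character-orthogonality computation underlying the projection formulas of Proposition~\ref{proposition-galois.promislow}(ii). If one prefers to exhibit the decomposition explicitly (thereby proving spanning directly and bypassing the dimension count), one can define, for $\ell \in \calL$, the signed average $c_0 = \tfrac14(\ell + \iota_{xy}\ell + \iota_{xz}\ell + \iota_{yz}\ell)$ and, matching the character of each $\omega_g$, analogous signed averages $c_1\omega_x, c_2\omega_y, c_3\omega_z$; each such average transforms under $H$ with the same character as the corresponding $\omega_g$, so dividing by $\omega_g$ produces an $H$-invariant element of $\calZ$, giving $\ell = c_0 + c_1\omega_x + c_2\omega_y + c_3\omega_z$ with every $c_i \in \calZ$. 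The only points requiring care are the correct identification of the $H$-action on the $\omega_g$ (using $\alpha_s = \iota_{yz}$, $\alpha_t = \iota_{xz}$, $\alpha_u = \iota_{xy}$) and the appeal to $\mathrm{char}(k) \neq 2$ to invert the sign matrix.
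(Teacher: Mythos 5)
Your proposal is correct and follows essentially the same route as the paper: both invoke $\calZ = \calL^H$ and $[\calL:\calZ] = |H| = 4$ from Proposition~\ref{proposition-galois.promislow}, then obtain linear independence by applying the four Galois automorphisms to a vanishing $\calZ$-linear combination. Your write-up merely makes explicit the sign-matrix (character-orthogonality) computation that the paper leaves implicit, which is a fine elaboration rather than a different argument.
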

\begin{proof}
Since $\calZ = \calL^H$ by Proposition \ref{proposition-galois}, we have $[\calL:\calZ] = [\calL:\calL^H] = |H| = 4$ and $\text{Gal}(\calL/\calZ) = H = \{\text{id},\iota_{xy},\iota_{xz},\iota_{yz}\}$. Now, linear independence of the set $\{1,\omega_x,\omega_y,\omega_z\}$ follows by applying the Galois automorphisms to a $\calZ$-linear combination $a + b\omega_x+c\omega_y+d\omega_z = 0$, hence deducing $a=b=c=d=0$.
\end{proof}

Thus given an element $\alpha \in \calL$, it can be uniquely written as
\begin{equation}\label{equation-Z.combination.of.L}
\alpha = a + b \omega_x + c \omega_y + d \omega_z
\end{equation}
with the coefficients $a,b,c,d \in \calZ$ explicitly given by
\begin{equation}\label{equation-Z.combination.of.L.2}
\begin{aligned}
& a = \frac{1}{4}(\alpha + \iota_{xy}(\alpha) + \iota_{xz}(\alpha) + \iota_{yz}(\alpha)), \quad b = \frac{1}{4 \omega_x}(\alpha - \iota_{xy}(\alpha) - \iota_{xz}(\alpha) + \iota_{yz}(\alpha)), \\
& c = \frac{1}{4 \omega_y}(\alpha - \iota_{xy}(\alpha) + \iota_{xz}(\alpha) - \iota_{yz}(\alpha)), \quad d = \frac{1}{4 \omega_z}(\alpha + \iota_{xy}(\alpha) - \iota_{xz}(\alpha) - \iota_{yz}(\alpha)).
\end{aligned}
\end{equation}

In the next lemma we compute the center of $k[G]$.

\begin{lemma}\label{lemma-center.of.kG}
We have $\emph{Z}(k[G]) = k[\kappa_x,\kappa_y,\kappa_z,\xi]$.
\end{lemma}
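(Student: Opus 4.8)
The plan is to reduce the statement to the computation of a fixed subring and then carry out that computation by an eigenspace decomposition. First I would show that $\mathrm{Z}(k[G])$ coincides with the subring $k[N]^F$ of conjugation-invariant elements of $k[N]$. For the inclusion $\mathrm{Z}(k[G]) \subseteq k[N]^F$, write a central element as $w = \sum_{\tilde f \in \tilde F} p_{\tilde f}\tilde f$ with $p_{\tilde f} \in k[N]$ and impose that $w$ commute with every $n \in N$; using $\tilde f n = n^f \tilde f$ and the uniqueness of the crossed-product expansion, this gives $p_{\tilde f}(n^f - n) = 0$ for all $n$ and all $\tilde f$. Exactly as in the proof of Proposition \ref{proposition-galois} (ii), maximality of $N$ supplies, for each $\tilde f \neq 1$, some $n$ with $n^f \neq n$; since $k[N]$ is a domain this forces $p_{\tilde f} = 0$, so $w \in k[N]$. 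Commuting with the transversal then reads $w^f = w$ for all $f \in F$, i.e. $w \in k[N]^F$. The reverse inclusion is immediate: an element of $k[N]$ commutes with all of $k[N]$ since $N$ is abelian, and if it is moreover fixed by conjugation then $\tilde f w = w^{\tilde f}\tilde f = w\tilde f$ for each $\tilde f$, so it is central in $k[G]$.

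It then remains to identify $k[N]^F$, where by Table \ref{table-conjugation.group} the $F$-action on $k[N]$ is implemented by $H = \{\mathrm{id}, \iota_{xy}, \iota_{xz}, \iota_{yz}\}$. I would first record the single-variable decomposition $k[g^{\pm 1}] = k[\kappa_g] \oplus \omega_g\, k[\kappa_g]$ for $g \in \{x,y,z\}$: since $\mathrm{char}(k) \neq 2$, the inversion automorphism $\iota_g$ splits $k[g^{\pm1}]$ into its $(+1)$- and $(-1)$-eigenspaces; the symmetric Laurent polynomials form exactly $k[\kappa_g]$ (each $g^m + \ol{g}^{\,m}$ being a polynomial in $\kappa_g$), while the antisymmetric ones equal $\omega_g\, k[\kappa_g]$ via the identity $g^m - \ol{g}^{\,m} = (g - \ol{g})\, r_m(\kappa_g)$ with $r_m \in k[\kappa_g]$. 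Tensoring the three factors over $k$ then exhibits $k[N]$ as a free module over $k[\kappa_x,\kappa_y,\kappa_z]$ with basis the eight monomials $\omega_x^{\epsilon_1}\omega_y^{\epsilon_2}\omega_z^{\epsilon_3}$, $\epsilon_i \in \{0,1\}$.

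Finally I would read off the invariants. The coefficient ring $k[\kappa_x,\kappa_y,\kappa_z]$ is fixed pointwise by $H$, and each basis monomial is a simultaneous eigenvector: $\iota_{xy}$ acts by $(-1)^{\epsilon_1+\epsilon_2}$, $\iota_{xz}$ by $(-1)^{\epsilon_1+\epsilon_3}$, and $\iota_{yz}$ by $(-1)^{\epsilon_2+\epsilon_3}$. Hence a monomial is $H$-invariant precisely when $\epsilon_1 \equiv \epsilon_2 \equiv \epsilon_3 \pmod 2$, leaving only $1$ and $\xi = \omega_x\omega_y\omega_z$. Because the action is diagonal on the free basis, an arbitrary element is invariant iff its component along every non-invariant monomial vanishes, whence $k[N]^F = k[\kappa_x,\kappa_y,\kappa_z] \oplus \xi\, k[\kappa_x,\kappa_y,\kappa_z]$. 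As $\xi^2 = (\kappa_x^2-1)(\kappa_y^2-1)(\kappa_z^2-1) \in k[\kappa_x,\kappa_y,\kappa_z]$, this is exactly the $k$-subalgebra $k[\kappa_x,\kappa_y,\kappa_z,\xi]$, which completes the proof.

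The step I expect to require the most care is the single-variable decomposition $k[g^{\pm1}] = k[\kappa_g] \oplus \omega_g\, k[\kappa_g]$, namely verifying that every antisymmetric Laurent polynomial is divisible by $\omega_g$ with symmetric quotient and that the symmetric part is precisely $k[\kappa_g]$; once this is in place, the tensor-product basis and the sign bookkeeping are routine linear algebra over $k[\kappa_x,\kappa_y,\kappa_z]$.
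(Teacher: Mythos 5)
Your proof is correct, and it takes a genuinely different route from the paper's. Both arguments reduce the lemma to computing the invariant ring $k[N]^H$: you do this reduction explicitly through the crossed-product expansion and maximality of $N$, whereas the paper does it implicitly when it asserts that the averaging map $P(p)=\frac{1}{4}\sum_{\gamma\in H}\gamma(p)$ is surjective onto $\mathrm{Z}(k[G])$. The difference lies in how the invariants are then computed. The paper proceeds monomial by monomial: strong induction on the exponent (a Chebyshev-type recursion) for one variable, a product decomposition for two variables, and, for three variables, the explicit $H$-invariant quotient $\frac{x^d-\bar x^d}{x-\bar x}\cdot\frac{y^e-\bar y^e}{y-\bar y}\cdot\frac{z^f-\bar z^f}{z-\bar z}$ to handle the antisymmetric part. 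You instead split each $k[g^{\pm 1}]$ into $\pm 1$-eigenspaces of the inversion $\iota_g$ (legitimate since $\mathrm{char}(k)\neq 2$, the standing assumption of the section, which is needed anyway to define $\kappa_g$ and $\omega_g$), obtaining $k[g^{\pm 1}]=k[\kappa_g]\oplus\omega_g\,k[\kappa_g]$, and tensor these to exhibit $k[N]$ as a free $k[\kappa_x,\kappa_y,\kappa_z]$-module of rank $8$ on which $H$ acts diagonally by signs; the invariants are then read off by pure sign bookkeeping. Your route avoids all induction, yields as a byproduct the freeness of $k[N]$ and of $k[N]^H=k[\kappa_x,\kappa_y,\kappa_z]\oplus\xi\,k[\kappa_x,\kappa_y,\kappa_z]$ as modules over $k[\kappa_x,\kappa_y,\kappa_z]$, and is an elementary realization of the invariant-theoretic alternative the paper sketches in Remark \ref{remark-molien.formula}; what the paper's computation buys in exchange is explicit formulas expressing the average of any given monomial in terms of the generators $\kappa_x,\kappa_y,\kappa_z,\xi$.
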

\begin{proof}
We remark that by Proposition \ref{proposition-galois.promislow} (iv) we have $\text{Z}(k[G]) = \calK(\xi) \cap k[x^{\pm 1},y^{\pm 1},z^{\pm 1}]$, which looks like ``almost'' what we claim, but nevertheless we do not use Proposition \ref{proposition-galois.promislow} in this proof.

We note that, given $p \in k[x^{\pm 1},y^{\pm 1},z^{\pm 1}]$, the combination
$$p + p^s + p^t + p^u = p + \iota_{yz}(p) + \iota_{xz}(p) + \iota_{xy}(p) = \sum_{\gamma \in \text{Gal}(\calL/\calZ)} \gamma(p)$$
belongs to $\text{Z}(k[G])$. We thus consider the surjective linear map
$$P \colon k[x^{\pm 1},y^{\pm 1},z^{\pm 1}] \to \text{Z}(k[G]), \quad P(p) = \frac{1}{4} \sum_{\gamma \in \text{Gal}(\calL/\calZ)} \gamma(p).$$
It then suffices to show that the image of $P$ equals $k[\kappa_x,\kappa_y,\kappa_z,\xi]$. We start by checking that $P(m) \in k[\kappa_x,\kappa_y,\kappa_z,\xi]$ when $m \in k[x^{\pm 1},y^{\pm 1},z^{\pm 1}]$ is a monomial. Furthermore, since $P$ commutes with $\iota_x$, $\iota_y$, $\iota_z$, we may assume that $m \in k[x,y,z]$.

Let $\Gamma_+$ be the subgroup of $\text{Aut}(\calL/k)$ generated by $\iota_x, \iota_y,\iota_z$, and $\Gamma_-$ be the group generated by the linear maps $-\iota_x, -\iota_y,-\iota_z$. Let
$$Q(m) := \frac{1}{8} \sum_{\gamma \in \Gamma_+} \gamma(m), \quad R(m) := \frac{1}{8} \sum_{\gamma \in \Gamma_-} \gamma(m),$$
and note that $P(m) = Q(m) + R(m)$.

We first consider the case when $m$ depends on a single variable, for example $x$, thus $m = x^d$ with $d \geq 1$. We proceed by strong induction on the exponent. For $d = 1$ we have $P(m) = \kappa_x$ and the claim is true. Now assume that the result holds for all degrees up to $d \geq 1$, and we compute
$$2^{d+1} \kappa_x^{d+1} = \sum_{k=0}^{d+1} \binom{d+1}{k} x^{2k-(d+1)} = \sum_{k=0}^{d+1} \binom{d+1}{k} x^{d+1-2k}.$$
By adding these two expressions and dividing by $2$, we get
\begin{align*}
2^{d+1} \kappa_x^{d+1} & = \sum_{k=0}^{d+1} \binom{d+1}{k} P(x^{d+1-2k}) \\
& = 2 P(x^{d+1}) + \sum_{k=1}^d \binom{d+1}{k} P(x^{d+1-2k}) \\
& = 2 P(x^{d+1}) + \sum_{k=1}^{\floor{\frac{d+1}{2}}} \binom{d+1}{k} P(x^{d+1-2k}) + \sum_{k=\floor{\frac{d+1}{2}}+1}^d \binom{d+1}{k} \iota_x P(x^{2k-d-1}). \\
\end{align*}
We now use strong induction to deduce that $P(x^{d+1}) \in k[\kappa_x,\kappa_y,\kappa_z,\xi]$. The same argument works \textit{mutatis mutandis} for the variables $y$, $z$.

Let us now consider the case when $m$ depends on two variables, for example $x$ and $y$. Thus take $m = x^dy^e$ with $d,e \geq 1$. Then
$$P(m) = \frac{1}{4}(x^dy^e + x^d \bar y^e + \bar x^dy^e + \bar x^d \bar y^e) = \frac{1}{4}(x^d + \bar x^d)(y^e + \bar y^e) = P(x^d)P(y^e),$$
and thus we are done by the case of a single variable. The cases for the variables $x,z$ and $y,z$ are treated analogously.

Finally let us consider the case of three variables $m = x^dy^ez^f$, with $d,e,f \geq 1$. We note that
$$Q(m) = \frac{1}{8}(x^d + \bar x^d)(y^e + \bar y^e)(z^f + \bar z^f) = P(x^d)P(y^e)P(z^f), \quad R(m) = \frac{1}{8} (x^d - \bar x^d) (y^e - \bar y^e) (z^f - \bar z^f).$$
Since $P(m) = Q(m) + R(m)$, we only need to deal with $R(m)$. We note that the quotient
$$\alpha := \frac{x^d - \bar x^d}{x - \bar x}\frac{y^e - \bar y^e}{y - \bar y}\frac{z^f - \bar z^f}{z - \bar z}$$
is invariant under the whole group $\text{Gal}(\calL/\calK) = \Gamma_+$, and so belongs to $\calK = k(\kappa_x,\kappa_y,\kappa_z)$. In fact, this quotient can be explicitly computed to be
$$\alpha = \sum_{i=-d+1}^{d-1}x^i\sum_{j=-e+1}^{e-1}y^j\sum_{k=-f+1}^{f-1} z^k = \Big(1 + 2 \sum_{i=1}^{d-1} P(x^i)\Big)\Big(1 + 2 \sum_{j=1}^{e-1} P(y^j)\Big)\Big(1 + 2 \sum_{k=1}^{f-1} P(z^k)\Big) \in k[\kappa_x,\kappa_y,\kappa_z,\xi].$$
Therefore $R(m) = \xi \alpha \in k[\kappa_x,\kappa_y,\kappa_z,\xi]$.

This analysis proves the inclusion $\text{im}(P) \subseteq k[\kappa_x,\kappa_y,\kappa_z,\xi]$. The other inclusion is clear, since $\kappa_x,\kappa_y,\kappa_z, \xi \in \calZ \cap k[G]$.
\end{proof}

\begin{remark}\label{remark-molien.formula}
An alternative proof of Lemma \ref{lemma-center.of.kG} can be obtained by noting that $\kappa_x,\kappa_y,\kappa_z,\xi$ are invariant functions and then using Molien's formula to check that they generate the whole algebra of invariant functions.
\end{remark}

We note the following simple corollary of Lemma \ref{lemma-center.of.kG}.

\begin{lemma}\label{lemma-division}
Let $f,g \in k[\kappa_x,\kappa_y,\kappa_z]$ be such that $\frac{f}{g} \in k[x^{\pm 1},y^{\pm 1},z^{\pm 1}]$. Then $\frac{f}{g}\in k[\kappa_x,\kappa_y,\kappa_z]$. Informally, we may say that if $g|f$ in $k[x^{\pm 1},y^{\pm 1},z^{\pm 1}]$ then $g|f$ in $k[\kappa_x,\kappa_y,\kappa_z]$.
\end{lemma}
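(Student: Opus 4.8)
The plan is to reinterpret $k[\kappa_x,\kappa_y,\kappa_z]$ as a ring of invariant Laurent polynomials and then exploit that a quotient of two invariants is again invariant. Recall that $\Gamma_+ \leq \text{Aut}(\calL/k)$ is the group generated by $\iota_x,\iota_y,\iota_z$ (as in the proof of Lemma \ref{lemma-center.of.kG}). Since each $\kappa_g = \frac{1}{2}(g+\ol{g})$ is fixed by every one of $\iota_x,\iota_y,\iota_z$, both $f$ and $g$ lie in the invariant subring $k[x^{\pm1},y^{\pm1},z^{\pm1}]^{\Gamma_+}$.

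The one substantive step is to record the identification
\[
k[x^{\pm1},y^{\pm1},z^{\pm1}]^{\Gamma_+} = k[\kappa_x,\kappa_y,\kappa_z].
\]
The inclusion $\supseteq$ is immediate. For $\subseteq$, I would expand an invariant element $h = \sum a_{ijl}\, x^iy^jz^l$; invariance under $\iota_x,\iota_y,\iota_z$ forces $a_{ijl} = a_{-i,j,l} = a_{i,-j,l} = a_{i,j,-l}$, so $h$ is a $k$-linear combination of the orbit sums $(x^i+\ol{x}^i)(y^j+\ol{y}^j)(z^l+\ol{z}^l)$ with $i,j,l \geq 0$. Each factor $x^n+\ol{x}^n$ lies in $k[\kappa_x]$ by the Chebyshev recursion $x^{n+1}+\ol{x}^{\,n+1} = 2\kappa_x(x^n+\ol{x}^n)-(x^{n-1}+\ol{x}^{\,n-1})$ (this is where $\text{char}(k)\neq 2$ enters), and likewise for $y,z$; hence $h \in k[\kappa_x,\kappa_y,\kappa_z]$.

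With this identity in hand the lemma is essentially automatic. Writing $h := f/g \in k[x^{\pm1},y^{\pm1},z^{\pm1}]$, the relation $f = gh$ holds in the Laurent polynomial ring, which is an integral domain. Applying any $\gamma \in \Gamma_+$ and using $\gamma(f)=f$ and $\gamma(g)=g$ yields $f = g\,\gamma(h)$, so $g\,(h-\gamma(h)) = 0$, and therefore $h = \gamma(h)$ since $g \neq 0$. Thus $h$ is $\Gamma_+$-invariant, and by the displayed identity $h \in k[\kappa_x,\kappa_y,\kappa_z]$, as claimed.

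I do not expect a serious obstacle here: the argument splits into the invariant-ring computation (which is elementary once one recognizes the single-variable Chebyshev description of $k[x^{\pm1}]^{\langle\iota_x\rangle}$ and the product structure of $\Gamma_+$) and a one-line cancellation in a domain. In fact the invariant-ring computation is already implicit in the proof of Lemma \ref{lemma-center.of.kG}, so one could alternatively cite that computation rather than redo it, which would shorten the argument to the cancellation step alone.
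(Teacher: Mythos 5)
Your proof is correct, but it takes a genuinely different route from the paper's. The paper's proof is a two-line reduction to Lemma \ref{lemma-center.of.kG}: since $f/g$ lies in $\calK \subseteq \calZ$ and also in $k[x^{\pm 1},y^{\pm 1},z^{\pm 1}] \subseteq k[G]$, it lies in $\mathrm{Z}(k[G]) = k[\kappa_x,\kappa_y,\kappa_z,\xi]$, so $f/g = p + q\xi$; applying $\iota_x$ (which fixes $f/g$ but sends $\xi \mapsto -\xi$) forces $q=0$. You instead bypass Lemma \ref{lemma-center.of.kG} and $\xi$ entirely by proving the cleaner identity $k[x^{\pm1},y^{\pm1},z^{\pm1}]^{\Gamma_+} = k[\kappa_x,\kappa_y,\kappa_z]$ for the full inversion group $\Gamma_+$ (the paper only ever computes invariants of the index-two subgroup $H$, which is why $\xi$ appears there), and your orbit-sum plus Chebyshev-recursion argument is simpler than the binomial-expansion induction used in the paper's proof of Lemma \ref{lemma-center.of.kG}. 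What the paper's route buys is brevity given the machinery already established; what yours buys is a self-contained, more transparent argument and a reusable statement about $\Gamma_+$-invariants that the paper never makes explicit. Two small remarks: your domain-cancellation step, while valid, is not needed --- $f$ and $g$ lie in $k[\kappa_x,\kappa_y,\kappa_z]$, which is pointwise fixed by $\Gamma_+$ acting by field automorphisms on $\calL$, so $f/g$ is automatically invariant; and $\mathrm{char}(k)\neq 2$ enters not through the recursion itself (whose coefficient $2\kappa_x = x+\bar x$ is harmless) but through the definition of $\kappa_g$ and the identification of degenerate orbit sums with the products $(x^i+\bar x^i)(y^j+\bar y^j)(z^l+\bar z^l)$ up to factors of $2$.
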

\begin{proof}
Since $\frac{f}{g} \in \calK \subseteq \calZ$ we deduce that $\frac{f}{g}\in \text{Z}(k[G])$, so by Lemma \ref{lemma-center.of.kG} we have $\frac{f}{g} = p + q\xi$ with $p,q \in k[\kappa_x,\kappa_y,\kappa_z]$. Note that $\frac{f}{g} = \iota_x(\frac{f}{g}) = p - q \xi$ which shows that $q = 0$. Thus $\frac{f}{g} = p \in k[\kappa_x,\kappa_y,\kappa_z]$ and the proof is complete.
\end{proof}

Let us gather together some information from the previous lemmas.

\begin{proposition}\label{proposition-equalities}
We have the equalities
\begin{align*}
k[x^{\pm 1},y^{\pm 1},z^{\pm 1}] & = k(x,y,z) \cap k[G]; \\
k[\kappa_x,\kappa_y,\kappa_z,\xi] & = \emph{Z}(k[G]) = k(\kappa_x,\kappa_y,\kappa_z)(\xi) \cap k[G]; \\
k[\kappa_x,\kappa_y,\kappa_z] & = k(\kappa_x,\kappa_y,\kappa_z) \cap k[x^{\pm 1},y^{\pm 1},z^{\pm 1}]. \\
\end{align*}
\end{proposition}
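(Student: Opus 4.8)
The plan is to observe that each of the three equalities has the shape $(\text{smaller ring}) = (\text{field}) \cap (\text{larger ring})$, that in each case the inclusion $\subseteq$ is immediate because the generators of the left-hand ring visibly lie in both of the intersected rings, and that the whole content therefore sits in the reverse inclusions, which I will obtain by assembling the results already proved in this section.

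For the first equality I would invoke the crossed product decomposition $\calQ = \calL \ast \tilde{F}$ of Proposition \ref{proposition-galois.promislow}(i), under which $\{1,s,t,u\}$ is a basis of $\calQ$ over $\calL = k(x,y,z)$. Given $\alpha \in k(x,y,z) \cap k[G]$, I would expand it twice: viewed in $\calL$ it is $\alpha = \alpha\cdot 1 + 0\cdot s + 0\cdot t + 0\cdot u$, while viewed in $k[G] = k[N]\ast\tilde{F}$ it is $\alpha = p_1 + p_s s + p_t t + p_u u$ with all $p_{\tilde{f}} \in k[N]$. Uniqueness of coordinates in the $\calL$-basis forces $p_s = p_t = p_u = 0$ and $p_1 = \alpha$, whence $\alpha = p_1 \in k[x^{\pm 1},y^{\pm 1},z^{\pm 1}]$.

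For the second equality the identity $k[\kappa_x,\kappa_y,\kappa_z,\xi] = \text{Z}(k[G])$ is precisely Lemma \ref{lemma-center.of.kG}, so only $\text{Z}(k[G]) = k(\kappa_x,\kappa_y,\kappa_z)(\xi) \cap k[G]$ remains, and for this I would transport centrality across the Ore localization, using $\calZ = \calK(\xi)$ from Proposition \ref{proposition-galois.promislow}(iii). If $\alpha \in \text{Z}(k[G])$ then $\alpha$ commutes with every $a,b \in k[G]$, hence with $b^{-1}$, hence with every $ab^{-1} \in \calQ$, so $\alpha \in \text{Z}(\calQ) = \calZ = \calK(\xi)$; conversely an element of $\calK(\xi) \cap k[G] = \calZ \cap k[G]$ lies in $\text{Z}(\calQ)$, so it commutes with all of $k[G]$, and being itself in $k[G]$ it is central there. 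The third equality is then a direct restatement of Lemma \ref{lemma-division}: writing $\alpha \in k(\kappa_x,\kappa_y,\kappa_z) \cap k[x^{\pm 1},y^{\pm 1},z^{\pm 1}]$ as $\alpha = f/g$ with $f,g \in k[\kappa_x,\kappa_y,\kappa_z]$, the relation $g \mid f$ in $k[N]$ yields $\alpha \in k[\kappa_x,\kappa_y,\kappa_z]$.

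Since every ingredient is already in hand, I expect no serious obstacle; the proposition is essentially a bookkeeping corollary of Proposition \ref{proposition-galois.promislow} and Lemmas \ref{lemma-center.of.kG} and \ref{lemma-division}. The only step that genuinely uses new input is the coordinate-uniqueness argument for the first equality, where one must invoke that $\{1,s,t,u\}$ is a \emph{basis} of $\calQ$ over $\calL$ (not merely a spanning set) in order to annihilate the $s$-, $t$- and $u$-components.
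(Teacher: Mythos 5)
Your proof is correct and takes essentially the same route as the paper, whose entire proof reads: the first equality is clear, the second is the content of Lemma \ref{lemma-center.of.kG}, and the third follows from Lemma \ref{lemma-division}. The details you supply---coordinate uniqueness in the $\calL$-basis $\{1,s,t,u\}$ from Proposition \ref{proposition-galois.promislow}(i) for the first equality, and the transport of centrality between $k[G]$ and its Ore localization $\calQ$ (with $\mathrm{Z}(\calQ)=\calZ=\calK(\xi)$) for the second---are precisely the justifications the paper leaves implicit.
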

\begin{proof}
The first displayed equality is clear, the second one is the content of Lemma \ref{lemma-center.of.kG} and the third one follows from Lemma \ref{lemma-division}.
\end{proof}

\subsection{The \texorpdfstring{$\omega$}{}-degree of an element}\label{subsection-omega.conjugates}

Regarding Section \ref{section-general.procedure}, we will concentrate in the study of $\omega_x$-conjugates. For brevity, we will denote by $\omega$ the element $\omega_x = \frac{1}{2}(x - \ol{x})$, and by $\kappa$ the element $\kappa_x = \frac{1}{2}(x + \ol{x})$. Note that the key relation here is Formula \eqref{equation-omega.kappa}, namely
$$\kappa^2 - \omega^2 = 1.$$
We specialize Definition \ref{definition-conjugate} in this context.

\begin{definition}\label{definition-omega.conjugate}
An element $T \in \calQ$ is called an \emph{$\omega$-conjugate} if $T^2 = \omega^2$. We say that $T$ is \emph{non-trivial} if $T \neq \pm \omega$.
\end{definition}

By Theorem \ref{theorem-central.division.algebras} (2), we have that $T$ is an $\omega$-conjugate if and only if $T = \omega^U$ for some $U \in \calQ \backslash \{0\}$; thus the name \emph{$\omega$-conjugate} is appropriate.

The following proposition is an extension of Proposition \ref{proposition-general.criterion} in the particular case of the Promislow's group.

\begin{proposition}\label{proposition-omega.conjugate.to.unit}
Let $T \in \calQ$ be an $\omega$-conjugate, and define $U_T := \kappa-T$.
\begin{enumerate}[(i),leftmargin=0.7cm]
\item The element $U_T$ is a unit in $\calQ$, with inverse $U_{-T} = \kappa+T$.
\item $T \in k[G]$ if and only if $U_T \in k[G]$, in which case $U_{-T}$ also belongs in $k[G]$.
\item $T$ is a non-trivial $\omega$-conjugate if and only if $U_T$ is a non-trivial unit.
\item If the involution of $k$ restricts to the identity on its prime subfield, then $T^{\ast} = -T$ if and only if $U_T$ is a unitary.
\end{enumerate}
Conversely, let $U \in \calQ \backslash \{0\}$, and define $T_U := \omega^U = U \omega U^{-1}$.
\begin{enumerate}[(i),leftmargin=0.7cm]
\item The element $T_U$ is an $\omega$-conjugate.
\item If both $U,U^{-1} \in k[G]$, then $T_U \in k[G]$.
\item If $T_U$ is a non-trivial $\omega$-conjugate, then $U$ is a non-trivial unit. The converse is also true if we assume $U \in k[G]$.
\item If $U$ is a unitary, then $T_U^{\ast} = -T_U$.
\end{enumerate}
\end{proposition}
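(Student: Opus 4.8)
The plan is to organize the eight claims around the observation that parts (i) and (ii) of each half are the $d=2$ specialization of Proposition~\ref{proposition-general.criterion}, so that the genuinely new content lies in the triviality parts (iii) and the unitarity parts (iv). For the routine parts I would simply compute: in the forward half, $(\kappa-T)(\kappa+T)=\kappa^2-T^2=\kappa^2-\omega^2=1$ by \eqref{equation-omega.kappa}, using that $\kappa\in\calZ$ is central and $T^2=\omega^2$; this gives (i) with $U_T^{-1}=\kappa+T=U_{-T}$, and since $\kappa,\omega\in k[G]$ assertion (ii) is immediate. In the converse half, $T_U^2=U\omega^2U^{-1}=\omega^2$ because $\omega^2=\kappa^2-1\in\calZ$, giving (i), and (ii) is clear.

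For the unitarity statements I would first record that $\kappa^{\ast}=\kappa$ and $\omega^{\ast}=-\omega$, which hold because the coefficient $\tfrac12$ is fixed by the involution and $x^{\ast}=\ol x$. Then $U_T^{\ast}=\kappa-T^{\ast}$, so $U_T$ is a unitary iff $U_T^{\ast}=U_T^{-1}=\kappa+T$, i.e. iff $T^{\ast}=-T$, proving (iv) of the forward half; and $T_U^{\ast}=(U^{-1})^{\ast}\omega^{\ast}U^{\ast}=U(-\omega)U^{-1}=-T_U$ whenever $U^{\ast}=U^{-1}$, proving (iv) of the converse half. For the triviality part (iii) of the forward half, one direction is the computation $U_\omega=\kappa-\omega=\ol x$ and $U_{-\omega}=\kappa+\omega=x$, so a trivial $T$ gives a trivial $U_T$. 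For the other direction, if $U_T=\lambda g$ is a trivial unit then $T=\kappa-\lambda g$, and $T^2=\omega^2=\kappa^2-1$ forces the relation $\lambda^2g^2-2\lambda\kappa g+1=0$ in $k[G]$; expanding $\kappa=\tfrac12(x+\ol x)$ and comparing coefficients on the group basis, torsion-freeness (so that $x\neq\ol x$, and $g^2=1$ forces $g=1$) eliminates every possibility except $g\in\{x,\ol x\}$ with $\lambda=1$, which return $T=\mp\omega$.

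The forward implication of (iii) in the converse half is again short: for $U=\lambda g$ we have $T_U=\omega^g$, and Table~\ref{table-conjugation.group} shows $x^g\in\{x,\ol x\}$ for every $g\in G$, whence $T_U=\pm\omega$ is trivial. The hard part will be the converse implication, that for $U\in k[G]$ a trivial $T_U$ forces $U$ to be trivial. My plan here is structural: writing $U=a+bs+ct+du$ with $a,b,c,d\in k[N]$ and using $s\omega=\omega s$, $t\omega=-\omega t$, $u\omega=-\omega u$ (which come from $\alpha_s(\omega)=\omega$ and $\alpha_t(\omega)=\alpha_u(\omega)=-\omega$), the condition $T_U=\omega$, i.e. $U\omega=\omega U$, forces $c=d=0$, while $T_U=-\omega$ forces $a=b=0$. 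Thus $U$ lies in $k[G_s]$ or in $k[G_s]\,t$, where $G_s:=\langle N,s\rangle=N\cup Ns$ is the index-two subgroup of $G$; decomposing $k[G]=k[G_s]\oplus k[G_s]\,t$ one checks that in the first case $U$, and in the second case $W:=Ut^{-1}$, is a unit of $k[G_s]$.

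The decisive point, and the step I expect to be the main obstacle, is then that $k[G_s]$ has only trivial units. For this I would use that $G_s\cong\Z^2\rtimes_{-1}\Z$ fits into an extension $1\to\Z^2\to G_s\to\Z\to 1$ of right-orderable groups and is therefore itself right-orderable, hence a unique-product group; by the classical theorem that group algebras of unique-product groups have only trivial units, this unit of $k[G_s]$ is trivial, and hence so is $U$. This is exactly where the proof exploits that the index-two subgroup $G_s$ is well behaved even though $G$ itself is not a unique-product group. I expect the orderability reduction, together with the coefficient bookkeeping in the forward-half (iii) computation, to be the only delicate points, with everything else routine.
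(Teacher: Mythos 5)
Your proof is correct and follows essentially the same route as the paper's: the same computations establish (i), (ii) and (iv) in both halves, the same quadratic-equation/support-comparison argument handles the forward triviality claim (iii), and the same reduction via conjugation by $\omega$ turns the converse (iii) into the statement that units of $k[x^{\pm 1},y^{\pm 1},z^{\pm 1}][s] \cong k[\Z^2\rtimes_{-1}\Z]$ are trivial. The only difference is that where the paper merely cites this last fact as known, you supply the standard justification (orderability is closed under extensions, one-sided orderable groups are unique product groups, and unique product groups have only trivial units), which is a valid filling-in of the citation rather than a departure from the paper's approach.
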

This is mostly Proposition \ref{proposition-general.criterion}. Nevertheless, since Proposition \ref{proposition-omega.conjugate.to.unit} gives necessary and sufficient conditions for the non-triviality of the units, we prefer to give a full detailed proof of it.
\begin{proof}[Proof of Proposition \ref{proposition-omega.conjugate.to.unit}]
Since $\kappa \in \calZ$ we have
$$U_T U_{-T} = (\kappa - T)(\kappa + T) = \kappa^2 - T^2 = \kappa^2 - \omega^2 = 1,$$
which shows that $U_T$ is a unit in $\calQ$ with inverse $U_{-T}$. Moreover, since $\kappa \in k[G]$, it is clear that $T \in k[G]$ if and only if $U_T \in k[G]$, in which case $U_{-T} \in k[G]$ as well. Furthermore, if the involution $\ast$ is the identity over the prime subfield of $k$, then a computation shows that $U_T^* = \kappa - T^{\ast}$, and so $U_T^* = U_T^{-1}$ if and only if $T^{\ast} = -T$.

Let us show now that $U_T$ is a non-trivial unit in case $T$ is a non-trivial $\omega$-conjugate. Suppose, by way of contradiction, that $U_T = \lambda g$ for some $\lambda \in k \backslash \{0\}$ and $g \in G$. After isolating $T$ in the equation $U_T = \lambda g$ and squaring, we get the equation
$$\lambda^2 g^2 - \lambda xg - \lambda \ol{x}g + 1 = 0.$$
This equation is inconsistent unless $g$ equals either $x$ or $\ol{x}$. But in such cases we get $\lambda = 1$, so $U_T = x^{\pm 1}$. This leads to $T = \pm \omega$, which are the trivial $\omega$-conjugates, a contradiction. Conversely, if $T = \pm \omega$, then
$$U_T = \kappa - T = \kappa \mp \omega = x^{\mp 1},$$
so $U_T$ is a trivial unit.\\

For the second part, we define $T_U := U \omega U^{-1}$ and compute 
$$T_U^2 = U \omega^2 U^{-1} = \omega^2$$
as $\omega^2 \in \calZ$. Hence $T_U$ is an $\omega$-conjugate. It is clear that if both $U,U^{-1} \in k[G]$ then $T_U \in k[G]$. Furthermore, if $U$ is a unitary, then $T_U = U \omega U^{\ast}$, and since $\omega^{\ast} = -\omega$ we get
$$T_U^{\ast} = U \omega^{\ast} U^{\ast} = - U \omega U^{\ast} = -T_U.$$

Let us show now that $U$ is a non-trivial unit in case $T_U$ is a non-trivial $\omega$-conjugate. Suppose, by way of contradiction, that $U = \lambda g$ for some $\lambda \in k \backslash \{0\}$ and $g \in G$. Write $g = n \tilde{f}$ for $n \in N$ and $\tilde{f} \in \tilde{F} = \{1,s,t,u\}$. Then
$$T_U = U \omega U^{-1} = n \omega^{\tilde{f}} \ol{n} \in \{\omega, -\omega\}$$
as $\omega^{\tilde{f}} \in \{\omega,-\omega\}$. Thus $T_U$ is a trivial $\omega$-conjugate, a contradiction. Conversely, suppose that $U \in k[G]$ and that $T_U = \pm \omega$. Write $U$ as
$$U = a + bs + ct + du$$
with $a,b,c,d \in k[x^{\pm 1},y^{\pm 1},z^{\pm 1}]$. The case $T_U = \omega$ implies $U^{\omega} = U$, and so $U = a + bs$. The same holds for $U^{-1}$, and so $U$ turns out to be a unit in $k[x^{\pm 1},y^{\pm 1},z^{\pm 1}][s]$. This latter ring is a subring of $k[\Z^2 \rtimes \Z]$, and such groups are known not to admit non-trivial units. Hence $U$ is a trivial unit, a contradiction.

The case $T_U = -\omega$ implies $U^{\omega} = -U$, and so $U = ct + du = (c + ds)t$. This implies that $c + ds$ is again a unit in $k[x^{\pm 1},y^{\pm 1},z^{\pm 1}][s]$, and by the same argument as above $U$ must be a trivial unit, again a contradiction. This finishes the proof.
\end{proof}

As a corollary, we get:

\begin{corollary}\label{corollary-iff.criterion}
The existence of non-trivial units in the group ring $k[G]$ is equivalent to the existence of non-trivial $\omega$-conjugates in $k[G]$.

If moreover the involution of $k$ restricts to the identity on its prime subfield, then the existence of non-trivial unitaries in $k[G]$ is equivalent to the existence of non-trivial $\omega$-conjugates in $k[G]$ which are anti-selfadjoint, i.e. an $\omega$-conjugate $T \in k[G]$ satisfying $T^{\ast} = -T$.
\end{corollary}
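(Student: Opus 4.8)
The plan is to deduce both equivalences directly from the two halves of Proposition \ref{proposition-omega.conjugate.to.unit}, using the assignment $T \mapsto U_T = \kappa - T$ in one direction and its counterpart $U \mapsto T_U = \omega^U$ in the other. No fresh computation is needed; all the analytic content (invertibility, membership in $k[G]$, non-triviality, and the adjoint relations) has already been established in that proposition, and the task is simply to thread together the correct parts.

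For the first equivalence I would argue as follows. Suppose $k[G]$ contains a non-trivial $\omega$-conjugate $T$. By part (ii) of the first half of Proposition \ref{proposition-omega.conjugate.to.unit}, the element $U_T = \kappa - T$ lies in $k[G]$; by part (i) it is a unit, and by part (iii) it is non-trivial. Hence $k[G]$ has a non-trivial unit. Conversely, suppose $U \in k[G]$ is a non-trivial unit, so that $U, U^{-1} \in k[G]$. Setting $T_U := \omega^U$, part (ii) of the second half gives $T_U \in k[G]$, part (i) shows $T_U$ is an $\omega$-conjugate, and the converse clause of part (iii)---which applies precisely because $U \in k[G]$---shows that $T_U$ is non-trivial. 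This produces the desired non-trivial $\omega$-conjugate in $k[G]$ and completes the first equivalence.

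For the second equivalence the argument is formally identical but carries along the adjoint conditions, under the standing hypothesis that the involution of $k$ restricts to the identity on its prime subfield. Given a non-trivial $\omega$-conjugate $T \in k[G]$ with $T^{\ast} = -T$, the unit $U_T = \kappa - T$ is non-trivial exactly as above, and part (iv) of the first half upgrades it to a unitary; thus $k[G]$ has a non-trivial unitary. Conversely, a non-trivial unitary $U \in k[G]$ is in particular a non-trivial unit satisfying $U^{-1} = U^{\ast} \in k[G]$, so by the previous paragraph $T_U = \omega^U$ is a non-trivial $\omega$-conjugate in $k[G]$, and part (iv) of the second half gives $T_U^{\ast} = -T_U$. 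Hence $T_U$ is the required anti-selfadjoint $\omega$-conjugate, finishing the second equivalence.

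Because every ingredient is already furnished by Proposition \ref{proposition-omega.conjugate.to.unit}, there is no genuine obstacle here; the corollary is purely an assembly step. The only points demanding care are bookkeeping ones: invoking the converse clause of the second half's part (iii) strictly when the unit is known to lie in $k[G]$, and matching the adjoint statements (the respective parts (iv)) to the correct direction of the correspondence.
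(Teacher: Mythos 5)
Your proof is correct and is exactly the argument the paper intends: the corollary is stated there as an immediate consequence of Proposition \ref{proposition-omega.conjugate.to.unit} (no separate proof is written), and your assembly of the two correspondences $T \mapsto U_T$ and $U \mapsto T_U$, including the careful use of the converse clause of part (iii) only when $U \in k[G]$ and the matching of the parts (iv) for the unitary case, is precisely the intended reasoning.
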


Corollary \ref{corollary-iff.criterion} gives a condition equivalent to the existence of units, which might conceivably be considered simpler than the determinant condition proved in \cite{determinant}, in the following sense: the determinant condition written as an equation over $\calZ$ is a single equation of degree $4$ with $16$ variables, whereas the condition for being an $\omega$-conjugate is an equation of degree $2$ with $16$ variables.

\begin{remark}\label{remark-good.observation}
Suppose that $U$ is a non-trivial unit in $k[G]$. Then by the above proposition we get that $T_U$ is a non-trivial $\omega$-conjugate in $k[G]$. Therefore $V := \kappa - T_U$ is again a non-trivial unit in $k[G]$. Moreover, its inverse is given by $V^{-1} = \kappa + T_U = 2 \kappa - V$. Thus $V$ satisfies
$$2 \kappa V - V^2 = 1.$$
From this we obtain that there are non-trivial units in $k[G]$ if and only if there are non-trivial units in $k[G]$ satisfying the equation
$$X^2 - 2 \kappa X + 1 = 0.$$
\end{remark}

We will now study $\omega$-conjugates more closely. Take $\phi : \calQ \ra \calQ$ to be any $k$-automorphism of $\calQ$ of order $2$ fixing $\omega^2$, that is $\phi(\omega^2) = \omega^2$. Then any element $\alpha \in \calQ$ can be uniquely written as
$$\alpha = \alpha_1 + \alpha_{-1}$$
where $\alpha_1,\alpha_{-1} \in \calQ$ are eigenvectors of $\phi$ with eigenvalues $1$ and $-1$, respectively. In fact, they are explicitly given by
$$\alpha_1 = \frac{1}{2}(\alpha + \phi(\alpha)), \quad \alpha_{-1} = \frac{1}{2}(\alpha - \phi(\alpha)).$$
Moreover $\phi$ preserves $\omega$-conjugacy, in the sense that it $T \in \calQ$ is an $\omega$-conjugate, then $\phi(T)$ is also an $\omega$-conjugate, simply because
$$\phi(T)^2 = \phi(T^2) = \phi(\omega^2) = \omega^2.$$

\begin{definition}\label{definition-phi.degree}
The \textit{$\phi$-degree} of an element $\alpha \in \calQ$ is the pair of natural numbers
$$\text{deg}_{\phi}(\alpha) := ( [\calZ(\alpha_1) : \calZ ],[\calZ(\alpha_{-1}) : \calZ ] ) \in \N \times \N,$$
where $\alpha_1,\alpha_{-1}$ are the elements determining the $\phi$-decomposition of $\alpha$.
\end{definition}

We note that, for any $D \in \calQ$, the division algebra $\calZ(D)$ is in fact a subfield of $\calQ$ containing $\calZ$. Thus we may use Theorem \ref{theorem-central.division.algebras} (1) to conclude that $[\calZ(D):\calZ] \leq 4$, where we recall that $[\calQ : \calZ] = 16$. Moreover, since
$$\calZ \subseteq \calZ(D) \subseteq \calQ$$
necessarily $[\calZ(D):\calZ]$ must be divisible by $[\calQ:\calZ] = 16$. We thus get a finite number of options for the $\phi$-degree of an element $\alpha \in \calQ$, namely
$$\text{deg}_{\phi}(\alpha) \in \{(1,1),(1,2),(1,4),(2,1),(2,2),(2,4),(4,1),(4,2),(4,4)\}.$$




In Proposition \ref{proposition-two.groups} we further restrict the $\phi$-degree of an $\omega$-conjugate $T \in \calQ$ in case $T_1,T_{-1} \neq 0$. 
First, we establish a technical lemma.

\begin{lemma}\label{lemma-center.omega.decomposition}
Let $T \in \calQ$ be an $\omega$-conjugate, $\phi : \calQ \ra \calQ$ a $k$-automorphism fixing $\omega^2$, and let $T = T_1 + T_{-1}$ be its $\phi$-decomposition. Then $T_1 T_{-1} = -T_{-1}T_1$, and moreover if both $T_1,T_{-1} \neq 0$ we also have the equalities
$$\emph{Z}(\calZ[T,T_{-1}]) = \calZ(T_1^2) = \calZ(T_{-1}^2)$$
and
$$[\calZ(T_1):\calZ] = [\calZ(T_{-1}):\calZ].$$
Here $\calZ[T_1,T_{-1}]$ is the subalgebra of $\calQ$ generated by $T_1$ and $T_{-1}$ over $\calZ$. As a consequence, if both $T_1,T_{-1} \neq 0$, then
$$\emph{deg}_{\phi}(T) \in \{(1,1),(2,2),(4,4)\}.$$
\end{lemma}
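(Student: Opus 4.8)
The plan is to extract everything from the single identity $T^2=\omega^2$ together with $\phi(\omega^2)=\omega^2$. First I would record that $\phi(T)$ is again an $\omega$-conjugate, so that both $T^2=\omega^2$ and $\phi(T)^2=\omega^2$ hold. Expanding
$T_1^2+T_{-1}^2=\tfrac14\bigl[(T+\phi(T))^2+(T-\phi(T))^2\bigr]=\tfrac12\bigl[T^2+\phi(T)^2\bigr]=\omega^2$,
and comparing with $T^2=(T_1+T_{-1})^2=T_1^2+T_1T_{-1}+T_{-1}T_1+T_{-1}^2=\omega^2$, immediately forces $T_1T_{-1}+T_{-1}T_1=0$, which is the desired anticommutation relation.

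Assume now $T_1,T_{-1}\neq 0$. Using the anticommutation I would check that $T_1^2$ commutes with $T_{-1}$ (namely $T_1^2T_{-1}=-T_1T_{-1}T_1=T_{-1}T_1^2$), hence with every generator of $\calZ[T_1,T_{-1}]$; thus $T_1^2$ and, symmetrically, $T_{-1}^2$ lie in the center of this algebra. Since $T_1^2+T_{-1}^2=\omega^2\in\calZ$, each is the other subtracted from $\omega^2$, so they generate the same field $\calF:=\calZ(T_1^2)=\calZ(T_{-1}^2)$ over $\calZ$. The key structural step is then to recognize $\calZ[T_1,T_{-1}]$ as the quaternion algebra $\bigl(\tfrac{T_1^2,\,T_{-1}^2}{\calF}\bigr)$: the relations $T_1^2,T_{-1}^2\in\calF^{\times}$ and $T_1T_{-1}=-T_{-1}T_1$ give a nonzero $\calF$-algebra map from the abstract quaternion algebra onto $\calZ[T_1,T_{-1}]$, and since a quaternion algebra over a field of characteristic $\neq 2$ is central simple over $\calF$, this map must be an isomorphism. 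In particular $\mathrm{Z}(\calZ[T_1,T_{-1}])=\calF=\calZ(T_1^2)=\calZ(T_{-1}^2)$.

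For the degree equality, note $\calZ\subseteq\calF\subseteq\calZ(T_1)$ with $[\calZ(T_1):\calF]\le 2$ because $T_1$ is a root of $X^2-T_1^2\in\calF[X]$, and likewise for $T_{-1}$. I would rule out degree $1$: if $T_1\in\calF=\mathrm{Z}(\calZ[T_1,T_{-1}])$, then $T_1$ commutes with $T_{-1}$, and combined with the anticommutation this gives $2T_1T_{-1}=0$, impossible since $\operatorname{char}k\neq 2$, $\calQ$ is a domain, and $T_1,T_{-1}\neq 0$. Hence $[\calZ(T_1):\calF]=[\calZ(T_{-1}):\calF]=2$, and multiplying by the common factor $[\calF:\calZ]$ yields $[\calZ(T_1):\calZ]=[\calZ(T_{-1}):\calZ]$. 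Since each of these degrees already lies in $\{1,2,4\}$, their equality forces $\deg_\phi(T)\in\{(1,1),(2,2),(4,4)\}$. The one point demanding care is the quaternion identification: one must confirm that $\{1,T_1,T_{-1},T_1T_{-1}\}$ spans $\calZ[T_1,T_{-1}]$ over $\calF$ (reduce words using $T_1^2,T_{-1}^2\in\calF$ and the sign rule) and that simplicity of the quaternion algebra upgrades the surjection to an isomorphism — this is precisely where the center is pinned down exactly rather than merely bounded.
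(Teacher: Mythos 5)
Your proof is correct, and the way you pin down the center is genuinely different from the paper's. The opening moves coincide: both arguments derive $T_1T_{-1}+T_{-1}T_1=0$ from $T^2=\phi(T)^2=\omega^2$, observe that $T_1^2$ and $T_{-1}^2$ are central in $\calR:=\calZ[T_1,T_{-1}]$, and use $T_1^2+T_{-1}^2=\omega^2\in\calZ$ to get $\calZ(T_1^2)=\calZ(T_{-1}^2)=:\calF$. From there the paper proceeds by a case analysis on $[\calR:\calZ]\in\{4,8,16\}$, combining the global dimension $[\calQ:\calZ]=16$ with Theorem \ref{theorem-central.division.algebras}(1) (dimensions of division algebras over their centers are squares) to force $\mathrm{Z}(\calR)=\calZ(T_1^2)$ in each case; you instead identify $\calR$ outright as the quaternion algebra $\bigl(\tfrac{T_1^2,\,T_{-1}^2}{\calF}\bigr)$ and read off the center from central simplicity (legitimate here, since $\mathrm{char}\,k\neq 2$ is the standing assumption of this section and $T_1^2,T_{-1}^2\neq 0$ because $\calQ$ is a domain). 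Your route buys a cleaner, essentially dimension-free center computation, and in fact a sharper conclusion: it excludes $(1,1)$ whenever $T_1,T_{-1}\neq 0$, something the paper only establishes later (Proposition \ref{proposition-two.groups}, and only for non-trivial $T$, by counting roots of $X^2=\omega^2$ in a field). The paper's route, conversely, stays entirely within tools it has already stated. Two points you flag or elide deserve one explicit line each: (a) for your surjection to land exactly on $\calR$ you need $\calF\subseteq\calR$, which holds because $\calZ[T_1^2]$ is a finite-dimensional commutative domain over $\calZ$ (it sits inside $\calQ$), hence already a field equal to $\calZ(T_1^2)$ --- the paper uses the same standard fact implicitly when asserting $\calZ(T_1^2)\subseteq\mathrm{Z}(\calR)$; (b) your final step, that each entry of $\deg_{\phi}(T)$ lies in $\{1,2,4\}$, does rest on $[\calQ:\calZ]=16$ via the discussion preceding the lemma, so the lemma as a whole (though not your center computation) still depends on Proposition \ref{proposition-galois.promislow}.
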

\begin{proof}
Let us write $\calR := \calZ[T_1,T_{-1}]$. We compute
\begin{align*}
T_1 T_{-1} + T_{-1} T_1 & = \frac{1}{4}\big[(T + \phi(T))(T - \phi(T)) + (T - \phi(T))(T + \phi(T))\big]\\
& = \frac{1}{2}(T^2 - \phi(T)^2) \\
& = \frac{1}{2}(\omega^2 - \omega^2) = 0.
\end{align*}
In particular, $T^2 = T_1^2 + T_{-1}^2 = \omega^2 \in \calZ$, hence $\calZ(T_1^2) = \calZ(T_{-1}^2)$.

Now we assume that $T_1,T_{-1} \neq 0$, so in particular $\calR$ is not commutative. A direct computation shows that $T_1^2$ commutes with $T_{-1}$, and $T_{-1}^2$ commutes with $T_1$, hence $T_1^2,T_{-1}^2 \in \text{Z}(\calR)$. This shows the inclusion
\begin{equation}\label{equation-inclusion}
\calZ(T_1^2) \subseteq \text{Z}(\calR).
\end{equation}
\begin{claim}\label{claim-1}
If either $T_1^2 \in \calZ$ or $T_{-1}^2 \in \calZ$, then $[\calR:\calZ] \leq 4$.
\end{claim}
\begin{proof}[Proof of Claim]
Due to the relation $T_1^2 + T_{-1}^2 = \omega^2 \in \calZ$, we see that both $T_1^2,T_{-1}^2 \in \calZ$. It follows that every element of $\calR$ can be written as a combination
$$z_1 + z_2 T_1 + z_3 T_{-1} + z_4 T_1 T_{-1}, \quad z_1,z_2,z_3,z_4 \in \calZ.$$
The claim follows.
\end{proof}
Now $\calZ \subseteq \calR \subseteq \calQ$, so the only possibilities for $[\calR:\calZ]$ are in fact $1,2,4,8$ and $16$. Let us check the statement of the lemma case by case.
\begin{enumerate}[1),leftmargin=0.7cm]
\item If $[\calR:\calZ] = 16$, then $\calR = \calQ$, and so $\text{Z}(\calR) = \calZ \subseteq \calZ(T_1^2)$. We are done by \eqref{equation-inclusion}.
\item If $[\calR:\calZ] = 8$, then by Theorem \ref{theorem-central.division.algebras} (1) the quantity $[\calR : \text{Z}(\calR)]$ must be a square dividing $8$, thus it is either $1$ or $4$. But $\calR$ is not commutative, so necessarily $[\calR:\text{Z}(\calR)] = 4$ and so $[\text{Z}(\calR):\calZ] = 2$. Thus by \eqref{equation-inclusion} it is enough to argue that $T_1^2 \notin \calZ$. But this follows from Claim \ref{claim-1}.
\item If $[\calR:\calZ] = 4$, then again Theorem \ref{theorem-central.division.algebras} (1) implies that $\text{Z}(\calR) = \calZ \subseteq \calZ(T_1^2)$. We are again done by \eqref{equation-inclusion}.
\item The cases $[\calR:\calZ] = 1$ and $2$ lead to the conclusion that $\calR$ is commutative, which is a contradiction.
\end{enumerate}
This analysis finishes the  first equality. For the second one, simply note that 
$$[\calZ(T_1) : \calZ] = [\calZ(T_1):\calZ(T_1^2)][\calZ(T_1^2):\calZ] = [\calZ(T_1):\calZ(T_1^2)][\text{Z}(\calR):\calZ],$$
$$[\calZ(T_{-1}) : \calZ] = [\calZ(T_{-1}):\calZ(T_{-1}^2)][\calZ(T_{-1}^2):\calZ] = [\calZ(T_{-1}):\calZ(T_{-1}^2)][\text{Z}(\calR):\calZ],$$
and moreover $[\calZ(T_1):\calZ(T_1^2)] = [\calZ(T_{-1}):\calZ(T_{-1}^2)] = 2$, since both $\calZ(T_1^2) = \calZ(T_{-1}^2) = \text{Z}(\calR)$ and $T_1$ does not commute with $T_{-1}$. Therefore we conclude that
$$[\calZ(T_1):\calZ] = [\calZ(T_{-1}):\calZ].$$
The statement of the $\phi$-degrees now follows.
\end{proof}

The first part of the following proposition shows that non-trivial $\omega$-conjugates fall naturally into two groups, assuming that both $T_1,T_{-1} \neq 0$.

\begin{proposition}\label{proposition-two.groups}
Let $T \in \calQ$ be an $\omega$-conjugate, $\phi : \calQ \ra \calQ$ a $k$-automorphism fixing $\omega^2$, and let $T = T_1 + T_{-1}$ be its $\phi$-decomposition. Assume that $T_1,T_{-1} \neq 0$.

Then either $T$ is a trivial $\omega$-conjugate or $\emph{deg}_{\phi}(T) \in \{(2,2),(4,4)\}$. Moreover, in the particular case that $T$ is non-trivial, the following are equivalent.
\begin{enumerate}[(i),leftmargin=0.7cm]
\item $\deg_{\phi}(T) = (2,2)$.
\item $T_1^2, T_{-1}^2 \in \calZ$.
\item $[\calZ[T_1,T_{-1}] : \calZ] = 4$.
\end{enumerate}
\end{proposition}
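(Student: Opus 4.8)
The plan is to leverage essentially all of the structural information already packaged in Lemma \ref{lemma-center.omega.decomposition}, so that the proposition reduces to a short piece of dimension bookkeeping governed by a single index. Throughout I would write $\calR := \calZ[T_1,T_{-1}]$ and recall from that lemma the anticommutation $T_1 T_{-1} = -T_{-1}T_1$, the identification $\text{Z}(\calR) = \calZ(T_1^2) = \calZ(T_{-1}^2)$, the equality $[\calZ(T_1):\calZ] = [\calZ(T_{-1}):\calZ]$, and the restriction $\deg_\phi(T) \in \{(1,1),(2,2),(4,4)\}$. The first step is to dispose of the case $(1,1)$, which yields the asserted dichotomy outright: if $\deg_\phi(T) = (1,1)$ then $T_1,T_{-1} \in \calZ = \text{Z}(\calQ)$, so they commute, and combined with $T_1 T_{-1} = -T_{-1}T_1$ and $\text{char}(k) \neq 2$ this forces $T_1 T_{-1} = 0$, impossible in the division algebra $\calQ$ since $T_1, T_{-1} \neq 0$. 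Hence $\deg_\phi(T) \in \{(2,2),(4,4)\}$, which already implies (and is stronger than) the claimed alternative ``either $T$ is trivial or $\deg_\phi(T) \in \{(2,2),(4,4)\}$''.

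For the equivalences I would next set up the arithmetic framework. Being a finite-dimensional subalgebra of the division algebra $\calQ$, the ring $\calR$ is itself a finite-dimensional division algebra over its center, and it is noncommutative because two nonzero anticommuting elements cannot commute when $\text{char}(k) \neq 2$. Applying Theorem \ref{theorem-central.division.algebras}(1) to $\calR$ gives $[\calR:\text{Z}(\calR)] = m^2$ with $m \geq 2$. The numerical backbone is the tower
$$[\calZ(T_1):\calZ] = [\calZ(T_1):\calZ(T_1^2)]\cdot[\calZ(T_1^2):\calZ] = 2\,[\text{Z}(\calR):\calZ],$$
where the factor $2$ is justified exactly as in the lemma: $T_1$ is a square root of $T_1^2 \in \text{Z}(\calR)$ that does not itself lie in $\text{Z}(\calR)$, since it fails to commute with $T_{-1}$. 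Writing $\ell := [\text{Z}(\calR):\calZ]$, everything now reduces to tracking $\ell$.

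I would then close the cycle (i) $\Rightarrow$ (ii) $\Rightarrow$ (iii) $\Rightarrow$ (i). For (i) $\Rightarrow$ (ii): $\deg_\phi(T) = (2,2)$ forces $[\calZ(T_1):\calZ] = 2$, so the tower gives $\ell = 1$, i.e.\ $T_1^2 \in \calZ$, and then $T_{-1}^2 \in \calZ(T_1^2) = \calZ$ as well. For (ii) $\Rightarrow$ (iii): if $T_1^2, T_{-1}^2 \in \calZ$ then $\{1,T_1,T_{-1},T_1T_{-1}\}$ spans $\calR$ over $\calZ$ (this is Claim \ref{claim-1}), so $[\calR:\calZ] \leq 4$; since $\text{Z}(\calR) = \calZ(T_1^2) = \calZ$ and $m \geq 2$, Theorem \ref{theorem-central.division.algebras}(1) pins $[\calR:\calZ] = m^2 = 4$. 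For (iii) $\Rightarrow$ (i): from $[\calR:\calZ] = m^2\ell = 4$ and $m \geq 2$ one reads off $m = 2$, $\ell = 1$, whence $T_1^2 \in \calZ$ and the tower returns $[\calZ(T_1):\calZ] = 2 = [\calZ(T_{-1}):\calZ]$, that is $\deg_\phi(T) = (2,2)$.

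The hard part is the single upgrade from $m \geq 2$ to $m = 2$ in the step (ii) $\Rightarrow$ (iii): one must combine the four-element spanning bound of Claim \ref{claim-1} with the square-dimension constraint of Theorem \ref{theorem-central.division.algebras}(1) to pin $[\calR:\calZ]$ to exactly $4$, rather than merely to some square dividing $16$. Once this is in place, every remaining step is routine degree bookkeeping built directly on the relations already established in Lemma \ref{lemma-center.omega.decomposition}.
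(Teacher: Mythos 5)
Your proof is correct, and its overall skeleton --- reduce everything to Lemma \ref{lemma-center.omega.decomposition}, then cycle through (i)$\Rightarrow$(ii)$\Rightarrow$(iii)$\Rightarrow$(i) using Claim \ref{claim-1} and Theorem \ref{theorem-central.division.algebras}(1) --- is the same as the paper's. Two of your sub-arguments, however, genuinely differ. For the dichotomy, the paper assumes $T$ non-trivial, notes that $\deg_{\phi}(T)=(1,1)$ would put $T\in\calZ$, and derives a contradiction by counting roots of $X^2=\omega^2$ in the field $\calZ(\omega)$ (namely $\pm\omega,\pm T$); you instead rule out $(1,1)$ outright from the anticommutation relation: if $T_1,T_{-1}$ were central they would commute, so $2T_1T_{-1}=0$, impossible in a division algebra when $\mathrm{char}(k)\neq 2$ and $T_1,T_{-1}\neq 0$. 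Your version is slightly stronger (it excludes $(1,1)$ even for trivial $T$ with both components nonzero, so non-triviality is never needed in the equivalences) and avoids root-counting. For (i)$\Rightarrow$(ii), the paper writes $T_1^2=\alpha T_1+\beta$ and conjugates by $T_{-1}$ to force $\alpha=0$; you instead run the degree tower $[\calZ(T_1):\calZ]=2\,[\mathrm{Z}(\calR):\calZ]$, which packages the same conjugation fact (that $T_1\notin\mathrm{Z}(\calR)$) into a single multiplicative identity that you then reuse verbatim for (iii)$\Rightarrow$(i). Both routes rest on the same inputs; yours is marginally more uniform, the paper's more hands-on. I see no gaps.
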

\begin{proof}
By Lemma \ref{lemma-center.omega.decomposition}, $\text{deg}_{\phi}(T) \in \{(1,1),(2,2),(4,4)\}$. Therefore to establish the first part of the proposition it is enough to argue that $\deg_{\phi}(T) \neq (1,1)$ in case $T$ is a non-trivial $\omega$-conjugate. However, if $\deg_{\phi}(T) = (1,1)$, then $T \in \calZ$ and the equation $X^2 = \omega^2$ would have more than two solutions in the field $\calZ(\omega)$, namely $\{\pm \omega,\pm T\}$, which cannot happen. This concludes the first part of the proposition.

For the second part of the proposition, suppose that $T$ is a non-trivial $\omega$-conjugate. We first prove (i) $\Rightarrow$ (ii). We note that $\deg_{\phi}(T) = (2,2)$ implies that we can find $\alpha,\beta \in \calZ$ such that $T_1^2 = \alpha T_1 +\beta$. Since $T_1^2$ commutes with $T_{-1}$, we deduce that
$$\alpha T_1 + \beta = (\alpha T_1 + \beta)^{T_{-1}} = - \alpha T_1 + \beta,$$
i.e. $T_1^2 = \beta \in \calZ$. Therefore $T_{-1}^2 = \omega^2 - T_1^2 \in \calZ$ also, and this proves (ii).

To establish (ii) $\Rightarrow$ (iii), let us assume that both $T_1^2,T_{-1}^2 \in \calZ$. By Claim \ref{claim-1} we get $[\calZ[T_1,T_{-1}] : \calZ] \in\{1,2,4\}$. Since $T_1,T_{-1} \neq 0$ by assumption, $\calZ[T_1,T_{-1}]$ is not commutative. Hence necessarily $[\calZ[T_1,T_{-1}] : \calZ] = 4$.

Finally we establish (iii) $\Rightarrow$ (i), so assume $[\calZ[T_1,T_{-1}] : \calZ] = 4$. Then by Lemma \ref{lemma-center.omega.decomposition} we have
$$4 = [\calZ[T_1,T_{-1}]:\calZ] = [\calZ[T_1,T_{-1}] : \text{Z}(\calZ[T_1,T_{-1}])] [\calZ(T_1^2):\calZ].$$
Since $\calZ[T_1,T_{-1}]$ is not commutative, using Theorem \ref{theorem-central.division.algebras} (1) we deduce that $[\calZ(T_1^2):\calZ] = 1$, i.e. $T_1^2 \in \calZ$. Since $T_1 \notin \calZ$, necessarily $[\calZ(T_1):\calZ] = 2$ and thus, by the first part of the proposition, we must have $\deg_{\phi}(T) = (2,2)$. This finishes the proof.
\end{proof}

Let us note an important improvement of Proposition \ref{proposition-two.groups} in a particular case. Recall that conjugation by $\omega$ is an automorphism of $\calQ$ of order $2$ which preserves $k[G]$. Explicitly, we have
$$(a + bs + ct + du)^{\omega} = a + bs - ct - du$$
for $a,b,c,d \in \calL = k(x,y,z)$. We will denote by $\omega$ itself the automorphism of $\calQ$ given by conjugation by $\omega$. The $\omega$-decomposition of $T = a + bs + ct + du$ is given explicitly by $T_1 = a + bs$ and $T_{-1} = ct + du$.

\begin{proposition}\label{proposition-omega.two.groups}
Let $T \in \calQ$ be an $\omega$-conjugate and $T = T_1 + T_{-1}$ be its $\omega$-decomposition. If $T$ is non-trivial, then necessarily $T_1,T_{-1} \neq 0$.
\end{proposition}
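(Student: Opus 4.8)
The plan is to read off the two halves of the conclusion from the position of $T$ relative to the automorphism $\phi=\omega$ (conjugation by $\omega:=\omega_x$). From the sign computation recorded just before the proposition, conjugation by $\omega$ fixes $\calL$ and $s$ and negates $t$ and $u$; hence its $+1$-eigenspace is exactly $\calS:=\calL+\calL s$ and its $-1$-eigenspace is $\calL t+\calL u$, and $\calS$ is the centraliser $C_\calQ(\omega)$. In this language $T_{-1}=0$ means $T\in\calS$ commutes with $\omega$, while $T_1=0$ means $T\in\calL t+\calL u$ anticommutes with $\omega$.

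First I would dispose of the case $T_{-1}=0$. If $T$ commutes with $\omega$, then since $\omega^2\in\calZ$ is central we have
$$(T-\omega)(T+\omega)=T^2-\omega^2=0,$$
and because $\calQ$ is a division algebra this forces $T=\pm\omega$, i.e. $T$ is trivial. Contrapositively, a non-trivial $\omega$-conjugate has $T_{-1}\neq 0$. Note that no such clean factorisation survives in the other case, because for an anticommuting $T$ one gets $(T-\omega)(T+\omega)=2T\omega\neq 0$; this is why $T_1\neq 0$ is the real content.

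So suppose $T_1=0$, write $T=ct+du$ with $c,d\in\calL$, and square using $tc=\iota_{xz}(c)\,t$, $ud=\iota_{xy}(d)\,u$, $t^2=y$, $u^2=z$, $tu=\ol x\ol z\,s$ and $ut=\ol y\,s$. This splits $T^2=\omega^2$ into its $\calL$-part and its $s$-part:
\begin{align*}
& c\,\iota_{xz}(c)\,y + d\,\iota_{xy}(d)\,z = \omega^2, \\
& c\,\iota_{xz}(d)\,\ol x\ol z + d\,\iota_{xy}(c)\,\ol y = 0 .
\end{align*}
Writing $N_{xz}(c):=c\,\iota_{xz}(c)$ and $N_{xy}(d):=d\,\iota_{xy}(d)$, and applying the elements of $H$ to the first equation and subtracting (as in the proof that $\calZ=\calL^H$) gives $\iota_{xy}(N_{xz}(c))=y^2N_{xz}(c)$ and $\iota_{xz}(N_{xy}(d))=z^2N_{xy}(d)$. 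Since $\ol y$ obeys the same transformation law under $H$, the quotient $N_{xz}(c)/\ol y=y\,N_{xz}(c)$ is $H$-invariant, so it lies in $\calZ$; likewise for $d$. Thus
$$(ct)^2=y\,N_{xz}(c)=:\zeta_P\in\calZ,\qquad (du)^2=z\,N_{xy}(d)=:\zeta_Q\in\calZ,\qquad \zeta_P+\zeta_Q=\omega^2,$$
while the second displayed equation says precisely that $ct$ and $du$ anticommute, so $\big((ct)(du)\big)^2=-\zeta_P\zeta_Q$ is again of the same ``special'' form $x\,N_{yz}(e)$ for a suitable $e\in\calL$. The task is to show this system has no solution with $(c,d)\neq(0,0)$.

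The degenerate sub-cases are clean: if $d=0$ then $N_{xz}(c)=\omega^2\ol y$, but the $y$-adic valuation is fixed by $\iota_{xz}$, so $v_y(N_{xz}(c))=2v_y(c)$ is even whereas $v_y(\omega^2\ol y)=-1$ is odd — a contradiction; the case $c=0$ is symmetric via $v_z$. The genuinely two-term case $c,d\neq 0$ is where I expect the main obstacle. Running the same valuation computations there only yields a family of \emph{mutually consistent} parity constraints (each of $\zeta_P$, $\zeta_Q$, $\zeta_P\zeta_Q$ carries an odd valuation at its ``matching'' place and even ones at the $H$-invariant places), so no single valuation obstructs $\zeta_P+\zeta_Q=\omega^2$. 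I therefore expect the true obstruction to be arithmetic rather than combinatorial: one must prove that $\omega^2$ cannot be written as a sum of two central elements $\zeta_P,\zeta_Q$ with $\zeta_P\ol y$ an $\iota_{xz}$-norm and $\zeta_Q\ol z$ an $\iota_{xy}$-norm, equivalently that the quaternion algebra $\calZ\langle ct,du\rangle=(\zeta_P,\zeta_Q)_\calZ$ cannot embed in $\calQ$ with trace-zero generators whose squares sum to $\omega^2$. Translating the three norm conditions into conditions on the relevant quaternion (norm-residue) symbols over $\calZ=k(\kappa_x,\kappa_y,\kappa_z)(\xi)$ and extracting a contradiction from $\zeta_P+\zeta_Q=\omega^2$ is, I believe, the crux of the argument.
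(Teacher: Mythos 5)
Your treatment of the case $T_{-1}=0$ is correct, and in fact slightly cleaner than the paper's: since $T$ then commutes with $\omega$, the factorization $(T-\omega)(T+\omega)=T^2-\omega^2=0$ in the division ring $\calQ$ forces $T=\pm\omega$ at once (the paper instead notes that $\calZ[\omega,T_1]$ is a field in which $X^2=\omega^2$ would otherwise have four roots). Your valuation argument also correctly disposes of the degenerate sub-cases $c=0$ and $d=0$ of the other half, and your two displayed equations for $T=ct+du$ match the computations the paper carries out later (in Proposition \ref{proposition-equivalence.central}).

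However, the case $T_1=0$ with $c,d\neq 0$ --- which you yourself flag as ``the crux'' --- is left unproven, and it is precisely the real content of the proposition. Moreover, the route you propose is unlikely to close: reformulating the problem as the non-embeddability of the quaternion algebra $(\zeta_P,\zeta_Q)_{\calZ}$ subject to $\zeta_P+\zeta_Q=\omega^2$ keeps the base field equal to $\calZ$, which has transcendence degree $3$ over $k$; over such a field there is no general vanishing theorem for quaternion algebras (indeed $\calQ$ itself is a nontrivial division algebra with center $\calZ$), so no purely symbol-theoretic statement over $\calZ$ can give the contradiction --- any obstruction must exploit the specific element $\omega$. The paper's key idea is exactly this: if $T_1=0$, then $T$ anticommutes with $\omega$ and $T^2=\omega^2$, so $\omega$ and $T$ generate a quaternion algebra over the \emph{one-variable} field $k(\omega^2)$, not over $\calZ$. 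After the harmless reduction to $k$ algebraically closed (the Ore localization of $k[G]$ embeds into that of the group ring over the algebraic closure), $k(\omega^2)[\omega,T]$ is a four-dimensional central division algebra over $k(\omega^2)$, being a finite-dimensional domain inside $\calQ$; but Tsen's theorem says a rational function field in one variable over an algebraically closed field carries no such algebra. This contradiction, resting on shrinking the base field to transcendence degree one so that a $C_1$-type theorem applies, is the missing idea in your outline.
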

Combining Propositions \ref{proposition-omega.two.groups} and \ref{proposition-two.groups}, we conclude that an $\omega$-conjugate $T \in \calQ$ is either trivial or $\text{deg}_{\omega}(T) \in \{(2,2),(4,4)\}$, in which case the following equivalences apply:
\begin{enumerate}[(1),leftmargin=0.7cm]
\item $\deg_{\omega}(T) = (2,2)$.
\item $T_1^2, T_{-1}^2 \in \calZ$.
\item $[\calZ[T_1,T_{-1}] : \calZ] = 4$.
\end{enumerate}
\begin{proof}[Proof of Proposition \ref{proposition-omega.two.groups}]
If we denote by $\ol{k}$ the algebraic closure of $k$, then the inclusion $k \subseteq \ol{k}$ gives an inclusion $k[G] \hookrightarrow \ol{k}[G]$, which in turn extends to an inclusion of the corresponding Ore field of fractions. Therefore we may assume, without loss of generality, that $k$ is algebraically closed.

We first show that we can never have $T_1 = 0$. By way of contradiction, let us assume that $T_1 = 0$. We have $\omega T_{-1} = -T_{-1} \omega$ and $T_{-1}^2 = \omega^2$, thus $k(\omega^2)[\omega,T_{-1}]$ is a division algebra with $[k(\omega^2)[\omega,T_{-1}]: k(\omega^2) ] = 4$, and whose center is $k(\omega^2)$. However, Tsen's Theorem (see \cite{Tsen}) shows that the field of rational functions in one variable over an algebraically closed field cannot be the center of a finite-dimensional division algebra. This contradiction shows that $T_1 \neq 0$.

Now, in case $T_{-1} = 0$ and $T = T_1 \neq \pm \omega$, then $\calZ[\omega,T_1]$ is a field in which the equation $X^2 =\omega^2$ has at least $4$ different solutions, namely $\{\pm \omega, \pm T_1\}$, which is a contradiction. This concludes the proof of the proposition.
\end{proof}

\begin{definition}\label{definition-types.units}
A non-trivial unit $U \in k[G]$ is said to be of \textit{type 2} if $\text{deg}_{\omega}(\omega^U) = (2,2)$. Otherwise it is said to be of \text{type 4}, i.e. in case $\text{deg}_{\omega}(\omega^U) = (4,4)$.
\end{definition}

\begin{remark}\label{remark-on.omega.conjugates}
\text{ }
\begin{enumerate}[1),leftmargin=0.7cm]
\item Let us see that there are $\omega$-conjugates in $\calQ$ whose $\omega$-degree is $(4,4)$. Indeed, let $k = \Q$ and let $U:= (1+x)+u$. Then a direct check shows that
$$U^{-1} = \frac{1 + \bar x}{2 + x + \bar x - z} - \frac{1}{2 + x + \bar x - z}u,$$
and so
$$T := \omega^U = \frac{2 + x + \bar x + z}{2 + x + \bar x - z} \omega - \frac{2(1+x)\omega}{2 + x + \bar x - z}u.$$
Note that here, if $T = T_1 + T_{-1}$ denotes the $\omega$-decomposition of $T$, we obtain
$$T_1 = \frac{2 + x + \bar x + z}{2 + x + \bar x - z} \omega$$
and another computation shows that $T_1^2$ is not invariant under $\iota_z$, so that $T_1^2 \notin \calZ$.
\item On the other hand, there are no non-trivial $\omega$-conjugates in $k[G]$ of the form $T = a+bt$, where $a,b \in k[x^{\pm 1},y^{\pm 1}, z^{\pm 1}]$, by the familiar argument using the unique product property. Indeed, we can replace every occurrence of $y$ with $t^2$ to express $T$ as a polynomial in $t$ with coefficients in $k[x^{\pm 1},z^{\pm}]$, and it is easy to argue that $T^2 \in k[x^{\pm 1}]$ implies that in fact $T \in k[x^{\pm 1}]$, which then implies that $T = \pm \omega$.
\item In Proposition \ref{proposition-omega.22.examples} below we use the results from \cite{Murray} to find non-trivial $\omega$-conjugates of $\omega$-degree $(2,2)$ in $k[G]$, whenever $\text{char}(k) > 2$. However, we were not able to find $\omega$-conjugates in $k[G]$ with $\omega$-degree $(4,4)$.
\end{enumerate}
\end{remark}

For the purpose of searching for $\omega$-conjugates it is convenient to assume some extra hypotheses about the coefficients. The following proposition shows that, if there are non-trivial $\omega$-conjugates in $k[G]$ with $\omega$-degree $(2,2)$, then there are also non-trivial $\omega$-conjugates with the same $\omega$-degree, but whose $\omega$-decomposition $T_1 + T_{-1}$ satisfies $T_1 \omega \in \calZ$. 

\begin{proposition}\label{proposition-special.22}
Suppose that $T \in \calQ$ is an $\omega$-conjugate with $\deg_{\omega}(T) = (2,2)$, and consider the unit $U_T = \kappa - T$ given in Proposition \ref{proposition-omega.conjugate.to.unit}. Then the element
$$S := \omega^{U_T}$$
has the following properties:
\begin{enumerate}[(i),leftmargin=0.7cm]
\item $S$ is an $\omega$-conjugate with $\deg_{\omega}(S) = (2,2)$.
\item If $S = S_1 + S_{-1}$ is the $\omega$-decomposition of $S$, then $S_1 = \alpha \omega$ for some $\alpha \in \calZ$. If moreover $T \in k[G]$ then we can take $\alpha \in k[G]$.
\item If the involution of $k$ restricts to the identity on its prime subfield and $T^{\ast} = -T$, then also $S^{\ast} = -S$.
\end{enumerate}
\end{proposition}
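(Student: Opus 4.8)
The plan is to exploit the commutation relations between $\omega$ and the $\omega$-decomposition $T = T_1 + T_{-1}$ to put $S = \omega^{U_T}$ into a transparent form. Write $A := T_1$, $B := T_{-1}$, and let $\phi$ denote conjugation by $\omega$. By definition of the $\omega$-decomposition $\phi(A) = A$ and $\phi(B) = -B$, so $\omega$ commutes with $A$ (and with the central $\kappa$) and anticommutes with $B$. Since $\deg_\omega(T) = (2,2)$ we have $A, B \notin \calZ$ (in particular $A, B \neq 0$) and, by Proposition \ref{proposition-two.groups}, $A^2, B^2 \in \calZ$; Proposition \ref{proposition-omega.conjugate.to.unit} gives $U_T = \kappa - A - B$ with inverse $U_T^{-1} = \kappa + A + B$.

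The first key step is the intertwining identity $U_T\,\omega = \omega\,\phi(U_T)$, which is immediate from the commutation relations together with $\phi(U_T) = \kappa - A + B$. It yields
\[
S = U_T\,\omega\,U_T^{-1} = \omega\,W, \qquad W := \phi(U_T)\,U_T^{-1},
\]
and because $\phi$ has order $2$ one checks $\phi(W) = U_T\,\phi(U_T)^{-1} = W^{-1}$, so that $\phi(S) = \omega W^{-1}$. Hence the $\omega$-decomposition of $S$ is $S_1 = \tfrac{\omega}{2}(W + W^{-1})$ and $S_{-1} = \tfrac{\omega}{2}(W - W^{-1})$, and the whole computation reduces to $W + W^{-1}$. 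Writing $\phi(U_T) = U_T + 2B$ gives $W = 1 + 2B\,U_T^{-1}$ and $W^{-1} = \phi(W) = 1 - 2B\,\phi(U_T^{-1})$, whence
\[
W + W^{-1} = 2 + 2B\bigl(U_T^{-1} - \phi(U_T^{-1})\bigr) = 2 + 4B^2,
\]
using $U_T^{-1} - \phi(U_T^{-1}) = (\kappa+A+B) - (\kappa+A-B) = 2B$. Since $B^2 \in \calZ$, setting $\alpha := 1 + 2B^2 \in \calZ$ gives $S_1 = \alpha\omega$, which is (ii); and if $T \in k[G]$ then $B = T_{-1} \in k[G]$, so $\alpha \in \calZ \cap k[G] = \mathrm{Z}(k[G])$.

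For (i), $S = \omega^{U_T}$ is an $\omega$-conjugate for free by Proposition \ref{proposition-omega.conjugate.to.unit} (as $U_T \neq 0$), so what remains is to pin down the degree. I would first check $S$ is non-trivial: via the intertwining identity, $S = \omega$ forces $\phi(U_T) = U_T$, i.e. $2B = 0$, and $S = -\omega$ forces $\phi(U_T) = -U_T$, i.e. $\kappa = A$; both are excluded since $B \neq 0$ and $A \notin \calZ$. With $S$ non-trivial, Proposition \ref{proposition-omega.two.groups} yields $S_1, S_{-1} \neq 0$, and since $S_1^2 = \alpha^2\omega^2 \in \calZ$, Proposition \ref{proposition-two.groups} forces $\deg_\omega(S) = (2,2)$. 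Part (iii) is then essentially free from Proposition \ref{proposition-omega.conjugate.to.unit}(iv): under the hypothesis on the involution, $T^* = -T$ makes $U_T$ a unitary, and the unitarity of $U_T$ in turn makes $S = \omega^{U_T}$ anti-selfadjoint, $S^* = -S$.

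I expect the only genuine subtlety to lie in the degree bookkeeping for (i): the computation only delivers $S_1^2 \in \calZ$, which a priori is compatible with $\deg_\omega(S) \in \{(2,2),(4,4)\}$ or even with a trivial $\omega$-conjugate, so it is the non-triviality check followed by the invocations of Propositions \ref{proposition-omega.two.groups} and \ref{proposition-two.groups} that actually force $(2,2)$. The algebraic heart, the collapse $W + W^{-1} = 2 + 4B^2$, is short once the intertwining identity $U_T\omega = \omega\phi(U_T)$ is isolated.
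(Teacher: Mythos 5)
Your proof is correct and takes essentially the same route as the paper: both compute the $\omega$-decomposition of $S$ to get $S_1 = (1+2T_{-1}^2)\omega$ with $S_{-1}\neq 0$, then invoke Propositions \ref{proposition-omega.two.groups} and \ref{proposition-two.groups} to pin down the degree, and deduce (iii) from the unitarity of $U_T$ via Proposition \ref{proposition-omega.conjugate.to.unit}. The only difference is organizational: you package the computation through the intertwining identity $U_T\,\omega = \omega\,\phi(U_T)$ and the element $W = \phi(U_T)U_T^{-1}$, whereas the paper expands $\tfrac{1}{2}\bigl(\omega^{U_T} + \omega^{\omega U_T}\bigr)$ directly --- the same cancellations in a different wrapper.
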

\begin{proof}
Let $T = T_1 + T_{-1}$ be the $\omega$-decomposition of $T$. We have
\begin{align*}
S_1 & = \frac{1}{2}(S+S^{\omega}) \\
& = \frac{1}{2}(\omega^{U_T} + \omega^{\omega U_T}) \\
& = \frac{1}{2} \Big( (\kappa - T_1 - T_{-1}) (\kappa + T_1 - T_{-1}) + (\kappa - T_1 + T_{-1})(\kappa + T_1 + T_{-1}) \Big) \omega \\
& = (\kappa^2 - T_1^2 + T_{-1}^2) \omega \\
& = (\kappa^2 - \omega^2 + 2 T_{-1}^2) \omega = (1 + 2 T_{-1}^2) \omega
\end{align*}
and
\begin{align*}
S_{-1} & = \frac{1}{2}(S-S^{\omega}) \\
& = \frac{1}{2}(\omega^{U_T} - \omega^{\omega U_T}) \\
& = \frac{1}{2} \Big( (\kappa - T_1 - T_{-1}) (\kappa + T_1 - T_{-1}) - (\kappa - T_1 + T_{-1})(\kappa + T_1 + T_{-1}) \Big) \omega \\
& = 2(T_1T_{-1} - \kappa T_{-1}) \omega = 2(T_1 - \kappa)T_{-1} \omega.
\end{align*}
Since $\deg_{\omega}(T) = (2,2)$, we see that $T_1 - \kappa \neq 0$, hence $S_{-1} \neq 0$ and therefore $\text{deg}_{\omega}(S)$ is either $(2,2)$ or $(4,4)$. By Propositions \ref{proposition-two.groups} and \ref{proposition-omega.two.groups}, $1+2T_{-1}^2 \in \calZ$, and hence $[\calZ(S_1):\calZ] \leq 2$. This establishes both (i) and (ii) with $\alpha = 1 + 2T_{-1}^2$. Observe that if $T \in k[G]$ then also $T^{\omega} \in k[G]$, so $T_{-1} \in k[G]$ and $\alpha \in k[G]$.

As for (iii), we recall (see the proof of Corollary \ref{corollary-iff.criterion}) that if $T$ is anti-selfadjoint then $U_T$ is a unitary. In this case, we compute
$$S^{\ast} = (\omega^{U_T})^{\ast} = (U_T \omega U_T^{\ast})^{\ast} = U_T \omega^{\ast} U_T^{\ast}.$$
Since $\omega^{\ast} = -\omega$, this concludes the proof that $S^{\ast} = -S$, and the proof of the proposition.
\end{proof}

As corollaries, we get the following results.

\begin{corollary}\label{corollary-special.22}
The following hold true:
\begin{enumerate}[(a),leftmargin=0.7cm]
\item There exists a non-trivial unit $U \in k[G]$ of type $2$ if and only if there exists a non-trivial $\omega$-conjugate $S \in k[G]$ with $\emph{deg}_{\omega}(S) = (2,2)$ of the form
$$S = \alpha \omega + ct + du,$$
with $\alpha \in k[\kappa_x,\kappa_y,\kappa_z,\xi]$ and $c,d \in k[x^{\pm 1},y^{\pm 1},z^{\pm 1}]$.
\item Assuming that the involution of $k$ restricts to the identity on its prime subfield, there exists a non-trivial unitary $U \in k[G]$ of type $2$ if and only if there exists a non-trivial anti-selfadjoint $\omega$-conjugate $S \in k[G]$ with $\emph{deg}_{\omega}(S) = (2,2)$ of the form
$$S = \alpha \omega + ct + du,$$
with $\alpha \in k[\kappa_x,\kappa_y,\kappa_z,\xi]$ and $c,d \in k[x^{\pm 1},y^{\pm 1},z^{\pm 1}]$.
\end{enumerate}
\end{corollary}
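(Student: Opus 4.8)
The plan is to assemble this corollary directly from the two preceding propositions: it is essentially a repackaging of them, once the dictionary between units and $\omega$-conjugates is combined with the normalization furnished by Proposition \ref{proposition-special.22}. The guiding principle I would keep in mind throughout is that, by Definition \ref{definition-types.units}, a non-trivial unit $U$ is of type $2$ precisely when $\deg_{\omega}(\omega^U) = (2,2)$, while Proposition \ref{proposition-omega.conjugate.to.unit} translates between non-trivial units $U \in k[G]$ and non-trivial $\omega$-conjugates $T \in k[G]$ via $T = \omega^U$ and $U_T = \kappa - T$, preserving membership in $k[G]$ and non-triviality.

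I would prove the backward implication of (a) first, as it is the cleaner one. Given a non-trivial $\omega$-conjugate $S \in k[G]$ with $\deg_{\omega}(S) = (2,2)$, Proposition \ref{proposition-omega.conjugate.to.unit}(i)--(iii) immediately yields that $U_S := \kappa - S$ is a non-trivial unit in $k[G]$. The only point requiring care is that $U_S$ is genuinely of \emph{type} $2$ rather than type $4$: but Proposition \ref{proposition-special.22}(i), applied with its input $\omega$-conjugate taken to be $S$, shows that $\omega^{U_S}$ again has $\omega$-degree $(2,2)$, which by Definition \ref{definition-types.units} is exactly the assertion that $U_S$ is of type $2$.

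For the forward implication of (a) I would start from a non-trivial type-$2$ unit $U \in k[G]$ and set $T := \omega^U$. Since $U, U^{-1} \in k[G]$, Proposition \ref{proposition-omega.conjugate.to.unit} gives $T \in k[G]$ and that $T$ is a non-trivial $\omega$-conjugate, while the type-$2$ hypothesis means $\deg_{\omega}(T) = (2,2)$. Now I apply Proposition \ref{proposition-special.22} to $T$: the element $S := \omega^{U_T}$ is an $\omega$-conjugate with $\deg_{\omega}(S) = (2,2)$ (hence non-trivial) whose $\omega$-component satisfies $S_1 = \alpha\omega$ with $\alpha \in \calZ \cap k[G]$. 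Because $U_T = \kappa - T$ and its inverse $\kappa + T$ both lie in $k[G]$, we get $S = U_T\,\omega\,U_T^{-1} \in k[G]$. Writing $S = a + bs + ct + du$ in the $\calL$-basis $\{1,s,t,u\}$, the identity $S_1 = a + bs = \alpha\omega \in \calL$ forces $b = 0$ and $a = \alpha\omega$; moreover $\alpha \in \calZ \cap k[G] = \mathrm{Z}(k[G]) = k[\kappa_x,\kappa_y,\kappa_z,\xi]$ by Lemma \ref{lemma-center.of.kG}, so $S = \alpha\omega + ct + du$ has exactly the required shape.

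Finally, part (b) is obtained by running the same two arguments while tracking the involution, the hypothesis on the prime subfield being precisely what the relevant parts of the two propositions require. In the forward direction, if $U$ is moreover a unitary, then Proposition \ref{proposition-omega.conjugate.to.unit}(iv) gives $T^{\ast} = -T$, and Proposition \ref{proposition-special.22}(iii) propagates this to $S^{\ast} = -S$; in the backward direction, $S^{\ast} = -S$ forces $U_S$ to be a unitary again by Proposition \ref{proposition-omega.conjugate.to.unit}(iv), while type $2$ is obtained as before. I do not expect a genuine obstacle here: the whole argument is bookkeeping, and the only two non-formal steps — that the produced unit is of type $2$ (not type $4$) and that $S$ has the stated normal form — are delivered verbatim by Proposition \ref{proposition-special.22}.
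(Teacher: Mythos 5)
Your proof is correct and takes essentially the same route as the paper's: both directions combine Proposition \ref{proposition-omega.conjugate.to.unit} (the dictionary between non-trivial units and non-trivial $\omega$-conjugates, including the unitary/anti-selfadjoint statements for part (b)) with Proposition \ref{proposition-special.22} (the normalization $S_1=\alpha\omega$ and the preservation of $\omega$-degree $(2,2)$). Your appeal to Lemma \ref{lemma-center.of.kG} to conclude $\alpha\in k[\kappa_x,\kappa_y,\kappa_z,\xi]$ is the same fact the paper cites through Proposition \ref{proposition-equalities}, so there is no substantive difference.
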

\begin{proof}
Given such a unit $U$ of type $2$, we let $T := \omega^U$, which has $\omega$-degree $(2,2)$ by assumption. In particular $T \in k[G]$ is non-trivial, and so the unit $U_T := \kappa - T$ is a non-trivial unit in $k[G]$ by Proposition \ref{proposition-omega.conjugate.to.unit}. Thus $S := \omega^{U_T}$ is also a non-trivial $\omega$-conjugate in $k[G]$ by the same proposition, with $\omega$-degree $(2,2)$ and of the required form by Propositions \ref{proposition-special.22} and \ref{proposition-equalities}.

Conversely, let $S \in k[G]$ be a non-trivial $\omega$-conjugate of the stated form, with $\omega$-degree $(2,2)$. Then the associated unit $U_S := \kappa - S$ is a non-trivial unit in $k[G]$ of type $2$ by Propositions \ref{proposition-omega.conjugate.to.unit} and \ref{proposition-special.22}. This proves part (a).

Part (b) is completely analogous, taking into account the corresponding statements relating unitarity of $U_T$ and anti-selfadjointness of $T$.
\end{proof}

\begin{corollary}\label{corollary-special.22.2}
There exists a non-trivial unit $U \in k[G]$ of type $2$ if and only if there exists a non-trivial unit $V \in k[G]$ of the form
$$V = (\kappa - \alpha \omega) - ct - du$$
where $\alpha \in k[\kappa_x,\kappa_y,\kappa_z,\xi]$ and $c,d \in k[x^{\pm 1},y^{\pm 1},z^{\pm 1}]$, whose inverse is $V^{-1} = (\kappa + \alpha \omega) + ct + du$.
\end{corollary}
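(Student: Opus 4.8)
The plan is to recognize this corollary as a direct repackaging of Corollary~\ref{corollary-special.22}(a) via the correspondence $T \leftrightarrow U_T = \kappa - T$ between $\omega$-conjugates and units furnished by Proposition~\ref{proposition-omega.conjugate.to.unit}. Concretely, the unit $V$ here is nothing but the unit $U_S$ attached to an $\omega$-conjugate $S$ of the special shape $\alpha\omega + ct + du$ delivered by Corollary~\ref{corollary-special.22}(a), so the whole statement amounts to transporting that corollary across $T \mapsto \kappa - T$ and reading off the resulting expressions for $V$ and $V^{-1}$.

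For the forward direction I would assume $U \in k[G]$ is a non-trivial unit of type $2$. Corollary~\ref{corollary-special.22}(a) then yields a non-trivial $\omega$-conjugate $S \in k[G]$ with $\deg_{\omega}(S) = (2,2)$ of the form $S = \alpha\omega + ct + du$, where $\alpha \in k[\kappa_x,\kappa_y,\kappa_z,\xi]$ and $c,d \in k[x^{\pm 1},y^{\pm 1},z^{\pm 1}]$. Setting $V := U_S = \kappa - S$, Proposition~\ref{proposition-omega.conjugate.to.unit}~(ii)--(iii) makes $V$ a non-trivial unit of $k[G]$ with inverse $U_{-S} = \kappa + S$. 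Expanding these gives $V = (\kappa - \alpha\omega) - ct - du$ and $V^{-1} = (\kappa + \alpha\omega) + ct + du$, which is precisely the claimed form.

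For the converse I would take a non-trivial unit $V = (\kappa - \alpha\omega) - ct - du$ with $V^{-1} = (\kappa + \alpha\omega) + ct + du$ and coefficients in the prescribed rings, and put $S := \kappa - V = \alpha\omega + ct + du \in k[G]$, so that $V^{-1} = \kappa + S$. Since $\kappa \in \calZ$ is central, expanding $1 = V V^{-1} = (\kappa - S)(\kappa + S) = \kappa^2 - S^2$ and using \eqref{equation-omega.kappa} gives $S^2 = \kappa^2 - 1 = \omega^2$, so $S$ is an $\omega$-conjugate; it is non-trivial because $V = U_S$ is (Proposition~\ref{proposition-omega.conjugate.to.unit}~(iii)). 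The $\omega$-decomposition of $S$ is $S_1 = \alpha\omega$, $S_{-1} = ct + du$, since $\alpha\omega \in \calL$ sits in the $+1$-eigenspace of conjugation by $\omega$ while $ct + du$ is negated. Because $\alpha \in \calZ$ and $\omega^2 \in \calZ$, I obtain $S_1^2 = \alpha^2 \omega^2 \in \calZ$, whence also $S_{-1}^2 = \omega^2 - S_1^2 \in \calZ$. As $S$ is non-trivial we have $S_1, S_{-1} \neq 0$ by Proposition~\ref{proposition-omega.two.groups}, so the equivalence (1)$\Leftrightarrow$(2) recorded after Proposition~\ref{proposition-omega.two.groups} forces $\deg_{\omega}(S) = (2,2)$. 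Feeding this $S$ into Corollary~\ref{corollary-special.22}(a) then produces a non-trivial unit of type $2$, closing the equivalence.

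The argument is largely bookkeeping across the correspondence $T \mapsto \kappa - T$, so I do not expect a genuine obstacle; the only step demanding attention is the verification that $\deg_{\omega}(S) = (2,2)$ in the converse. This rests on the observation that the hypothesized form $S_1 = \alpha\omega$ with $\alpha$ central forces $S_1^2 \in \calZ$, which collapses the dichotomy $\deg_{\omega}(S) \in \{(2,2),(4,4)\}$ of Proposition~\ref{proposition-two.groups} to the type-$2$ alternative.
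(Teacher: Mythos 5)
Your proof is correct, and its forward direction is exactly the paper's: Corollary \ref{corollary-special.22}(a) produces $S = \alpha\omega + ct + du$, then $V := \kappa - S$ is a non-trivial unit with $V^{-1} = \kappa + S$ by Proposition \ref{proposition-omega.conjugate.to.unit}. The converse is where you take a genuinely different route. The paper proves that $V$ itself is a unit of type $2$: it expands $\omega^V = V\omega V^{-1}$ inside the crossed product and uses $VV^{-1} = 1$ to find $(\omega^V)_1 = (1 + 2\omega^2(1-\alpha^2))\omega$, so $(\omega^V)_1^2 \in \calZ$ and $\deg_{\omega}(\omega^V) = (2,2)$ by Proposition \ref{proposition-omega.two.groups}. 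You bypass that computation entirely: passing to $S := \kappa - V$, the prescribed form of $V^{-1}$ gives $1 = VV^{-1} = (\kappa - S)(\kappa + S) = \kappa^2 - S^2$, hence $S^2 = \omega^2$ in one line; non-triviality of $S$ follows from Proposition \ref{proposition-omega.conjugate.to.unit}(iii); and $\deg_{\omega}(S) = (2,2)$ is immediate from $S_1 = \alpha\omega$ with $\alpha$ central, via the equivalences recorded after Proposition \ref{proposition-omega.two.groups}, after which Corollary \ref{corollary-special.22}(a) supplies the non-trivial type-$2$ unit. (One small citation to add: your step $S_{-1}^2 = \omega^2 - S_1^2$ rests on $S^2 = S_1^2 + S_{-1}^2$, i.e.\ the anticommutation $S_1S_{-1} = -S_{-1}S_1$ of Lemma \ref{lemma-center.omega.decomposition}; this is established in the paper, so it is not a gap.) The trade-off: the paper's explicit computation exhibits $\omega^V$ and shows that $V$ itself has type $2$, whereas your argument is shorter, purely formal, and concludes only existence --- which is all the corollary asserts; moreover, since the unit that Corollary \ref{corollary-special.22}(a) attaches to your $S$ is $U_S = \kappa - S = V$, the stronger conclusion is implicit in your argument as well.
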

\begin{proof}
If $U \in k[G]$ is a non-trivial unit of type $2$, then by Corollary \ref{corollary-special.22} there is a non-trivial $\omega$-conjugate $S \in k[G]$ of the form $S = \alpha \omega + ct + du$ with $\alpha \in k[\kappa_x,\kappa_y,\kappa_z,\xi]$ and $c,d \in k[x^{\pm 1},y^{\pm 1},z^{\pm 1}]$. By Proposition \ref{proposition-omega.conjugate.to.unit}, the unit $V := \kappa - S$ is non-trivial in $k[G]$ with inverse $V^{-1} = \kappa + S$, which is of the desired form.

Conversely, suppose that $V = (\kappa - \alpha \omega) - ct - du$ is a non-trivial unit in $k[G]$ with inverse $V^{-1} = (\kappa + \alpha \omega) + ct + du$. We compute
\begin{align*}
\omega^V & = V \omega V^{-1} \\
& = ((\kappa^2 - \alpha^2 \omega^2) + cc^t y + dd^u z)\omega + (cd^t \ol{x} \ol{z} + c^u d \ol{y}) \omega s + 2 (\kappa - \alpha \omega)\omega ct + 2(\kappa - \alpha \omega)\omega dt \\
& = ( 1 + 2 \omega^2(1 - \alpha^2))\omega + 2 (\kappa - \alpha \omega)\omega ct + 2(\kappa - \alpha \omega)\omega dt,
\end{align*}
where we have used that $VV^{-1} = 1$. From here we conclude that $(\omega^V)_1^2 \in \calZ$, and so $\text{deg}_{\omega}(\omega^V) = (2,2)$ by Proposition \ref{proposition-omega.two.groups}.
\end{proof}


The following proposition, which uses the existence of non-trivial units in $k[G]$ for the case $\text{char}(k) \neq 0$ recently proved in \cite{Gardam,Murray}, proves existence of non-trivial $\omega$-conjugates of the first class, namely having $\omega$-degree $(2,2)$.

\begin{proposition}\label{proposition-omega.22.examples}
Let $\Gamma = G$ be the Promislow's group. If $\emph{char}(k) > 2$ then there exists a non-trivial unit in $k[G]$ of type $2$.
\end{proposition}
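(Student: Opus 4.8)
The plan is to run the units constructed by Murray through the machinery assembled above and verify that they are of type $2$. Murray \cite{Murray} exhibits, over the prime field $\mathbb{F}_p$, an explicit non-trivial unit, which remains a non-trivial unit in $k[G]$ via the unit-preserving embedding $\mathbb{F}_p[G] \hookrightarrow k[G]$; so fix one such $U \in k[G]$ with $p = \mathrm{char}(k) > 2$ and set $T := \omega^U = U\omega U^{-1}$. The converse part of Proposition \ref{proposition-omega.conjugate.to.unit} shows that $T$ is an $\omega$-conjugate lying in $k[G]$, and its last item—valid precisely because $U \in k[G]$—shows that $T$ is \emph{non-trivial}, since $U$ is.

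By Propositions \ref{proposition-two.groups} and \ref{proposition-omega.two.groups} we already know $\deg_\omega(T) \in \{(2,2),(4,4)\}$, and by Definition \ref{definition-types.units} the assertion ``$U$ is of type $2$'' is exactly the statement $\deg_\omega(T) = (2,2)$. Thus the whole problem collapses to deciding which of the two degrees occurs, and by the equivalence in Proposition \ref{proposition-two.groups} this is settled by checking whether $T_1^2 \in \calZ$, where $T = T_1 + T_{-1}$ is the $\omega$-decomposition with $T_1 = a + bs$ and $T_{-1} = ct + du$ for $a,b,c,d \in \calL$.

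Concretely I would expand $U$ from Murray's formula, compute $U^{-1}$ from $UU^{-1} = 1$, form $T = U\omega U^{-1}$, and project onto its $\{1,s\}$-part via $T_1 = \tfrac{1}{2}(T + T^\omega)$; then compute $T_1^2 = (a^2 + b\,b^s x) + (ab + b\,a^s)s$, where $a^s = \iota_{yz}(a)$ and $b^s = \iota_{yz}(b)$, confirm that the $s$-coefficient $ab + b\,a^s$ vanishes, and verify that the surviving element $a^2 + b\,b^s x \in \calL$ is invariant under $H = \{\mathrm{id}, \iota_{xy}, \iota_{xz}, \iota_{yz}\}$, which by Proposition \ref{proposition-galois.promislow} is the same as membership in $\calZ$. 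This is the same style of computation carried out at the end of the proof of Corollary \ref{corollary-special.22.2}.

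The main obstacle is this last verification: it is an honest Laurent-polynomial computation with the explicit, and not particularly small, unit of \cite{Murray}. Rather than grinding it out coordinate by coordinate, the efficient route is to exploit the symmetries of Murray's unit under the inversion automorphisms $\iota_x, \iota_y, \iota_z$, which should force the $s$-coefficient of $T_1^2$ to cancel and render $a^2 + b\,b^s x$ manifestly $H$-invariant. Once $T_1^2 \in \calZ$ is established we conclude $\deg_\omega(T) = (2,2)$, so $U$ is a non-trivial unit of type $2$, proving the proposition.
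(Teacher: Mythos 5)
Your overall strategy coincides with the paper's: take Murray's unit $U$, pass to $T := \omega^U$, invoke Proposition \ref{proposition-omega.conjugate.to.unit} to get a non-trivial $\omega$-conjugate in $k[G]$, and then decide between degrees $(2,2)$ and $(4,4)$ by testing whether $T_1^2 \in \calZ$ (via Propositions \ref{proposition-two.groups} and \ref{proposition-omega.two.groups}). All of these reductions are correct. The problem is that the entire content of the proposition lives in the step you defer: you never verify $T_1^2 \in \calZ$, you only assert that the symmetries of Murray's unit ``should force'' the cancellation. This is a genuine gap, and moreover the mechanism you propose is not the right one. The coefficients of Murray's unit (e.g.\ $h = (1-z^{1-2n})^{p-2}$, or $a = (1+x)(1+y)(z^n+z^{1-n})h$) are \emph{not} invariant under the inversion automorphisms $\iota_x,\iota_y,\iota_z$ taken individually, so there is no ``manifest'' $H$-invariance to appeal to; an argument by symmetry alone would not go through as stated.

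What actually makes the computation tractable is the special shape of the \emph{inverse}: Murray's unit satisfies $U^{-1} = a' + b's + c't + d'u$ with $a' = \ol{x}a^s$, $b' = -\ol{x}b$, $c' = -\ol{y}c$, $d' = \ol{z}d^s$. Feeding these identities into $T_1 = \frac{1}{2}(T + T^{\omega})$ kills the $s$-coefficient of $T_1$ itself (not merely of $T_1^2$) identically, e.g.\ $ab' + b(a')^s = -\ol{x}ab + \ol{x}ba = 0$, so that $T_1 = \alpha\omega$ for a single $\alpha \in \calL$ --- in your notation this says $b = 0$, which is stronger than the vanishing of your $s$-coefficient $b(a + a^s)$ of $T_1^2$. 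Even after this structural cancellation, one must still check that $\alpha$ is fixed by $\text{Gal}(\calL/\calZ) = \{\mathrm{id},\iota_{xy},\iota_{xz},\iota_{yz}\}$, and that is an honest computation with the explicit Laurent polynomials of \cite{Murray}; it does not follow from any symmetry visible a priori in $U$. So your proposal is the right plan and follows the paper's route, but it is incomplete until this verification is carried out, and the shortcut offered in its place would fail.
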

\begin{proof}
Write $p := \text{char}(k) > 2$. In \cite[Theorem 3]{Murray} the author constructs non-trivial units in $k[G]$. Explicitly, these units are given as follows.

For any choice of parameters $m,n \in \Z$, define $h := (1-z^{1-2n})^{p-2} \in k[x^{\pm 1},y^{\pm 1},z^{\pm 1}]$ and elements $a,b,c,d \in k[x^{\pm 1},y^{\pm 1},z^{\pm 1}]$ by
\begin{equation*}
\begin{aligned}
& a := (1+x)(1+y)(z^n + z^{1-n}) h, \\
& b := z^m[(1+x)(\ol{x}+\ol{y}) + (1+\ol{y})(1+z^{2n-1})]h, \\
& c := z^m[(1+\ol{y})(x+y)z^n + (1+x)(z^n+z^{1-n})]h, \\
& d := z^{2n-1} + (4+x+\ol{x}+y+\ol{y})h.
\end{aligned}
\end{equation*}
Then $U := a + bs + ct + du$ is a non-trivial unit in $k[G]$, with inverse given by
$$U^{-1} = a' + b's + c't + d'u,$$
where
$$a' = \ol{x} a^s, \quad b' = -\ol{x} b, \quad c' = -\ol{y}c, \quad d' = \ol{z} d^s.$$
By Proposition \ref{proposition-omega.conjugate.to.unit}, the element $T := \omega^U$ is a non-trivial $\omega$-conjugate in $k[G]$. We will show that $\text{deg}_{\omega}(T) = (2,2)$, which will prove the proposition. We first obtain the $\omega$-decomposition of $T$:
$$T = \omega^U = U \omega U^{-1} = (a+bs+ct+du)(a'+b's-c't-d'u)\omega,$$
and so
\begin{equation*}
\begin{aligned}
T_1 = \frac{1}{2}(T+T^{\omega}) = & \Big[ (aa'+xb(b')^s-yc(c')^t-zd(d')^u) \\
& + (ab' + b(a')^s-\ol{x} \text{ } \ol{z} c(d')^t-\ol{y}d(c')^u) s \Big] \omega, \\
T_{-1} = \frac{1}{2}(T-T^{\omega}) = & \Big[ (-ac'-xb(d')^s+c(a')^t+\ol{y}zd(b')^u) t \\
& + (-ad' -b(c')^s+\ol{x}y\ol{z}c(b')^t+d(a')^u) u \Big] \omega.
\end{aligned}
\end{equation*}
Since $(a')^s = \ol{x} a$, $(c')^u = -\bar{x}yc$ and $(d')^t = zd$, we see that
$$ab' + b(a')^s = -\ol{x}ab + \ol{x}ba = 0$$
and also
$$\ol{x} \text{ }\ol{z} c(d')^t+\ol{y}d(c')^u = \ol{x} cd - \ol{x}dc = 0.$$
Hence the expression accompanying the element $s$ in the expression of $T_1$ is exactly $0$, which leads to
$$T_1 = (aa'+xb(b')^s-yc(c')^t-zd(d')^u) \omega =: \alpha \omega.$$
We compute the expression \eqref{equation-Z.combination.of.L} of $\alpha \in \calL$ as a $\calZ$-linear combination using the formulas \eqref{equation-Z.combination.of.L.2}. In fact, a direct computation shows that
\begin{align*}
& \alpha - \iota_{xy}(\alpha) - \iota_{xz}(\alpha) + \iota_{yz}(\alpha) = 0, \\
& \alpha - \iota_{xy}(\alpha) + \iota_{xz}(\alpha) - \iota_{yz}(\alpha) = 0, \\
& \alpha + \iota_{xy}(\alpha) - \iota_{xz}(\alpha) - \iota_{yz}(\alpha) = 0.
\end{align*}
We thus conclude that $\alpha$ is a central element in $\calQ$. Therefore $T_1^2 = \alpha^2 \omega^2 \in \calZ$, and this already implies that $\text{deg}_{\omega}(T) = (2,2)$ by Proposition \ref{proposition-omega.two.groups}. This finishes the proof.
\end{proof}

It is an interesting question whether we can find non-trivial units in $k[G]$ of type $2$ in the case $\text{char}(k) = 0$. Also, when the involution of $k$ restricts to the identity on its prime subfield, it is also interesting to ask about non-trivial unitaries in $k[G]$ of type $2$. These questions are handled in the next section.

\subsection{Criteria for existence of \texorpdfstring{$\omega$}{}-conjugates with \texorpdfstring{$\omega$}{}-degree \texorpdfstring{$(2,2)$}{}}\label{subsection-criteria.existence.omega.conjugates}

In this section we provide criteria for existence of non-trivial units in $k[G]$ of type $2$. They will be stated for an arbitrary field $k$ with $\text{char}(k) \neq 2$, although in view of Proposition \ref{proposition-omega.22.examples} they are interesting only when $\text{char}(k) = 0$.

In view of Corollary \ref{corollary-special.22}, we will study non-trivial $\omega$-conjugates $T \in \calQ$ of the form
\begin{equation}\label{equation-S.form}
T = a \omega + ct +du,
\end{equation}
where $a \in k[\kappa_x,\kappa_y,\kappa_z,\xi] \subseteq \calZ$ and $c,t \in k[x^{\pm 1},y^{\pm 1},z^{\pm 1}] \subseteq \calL$. Non-triviality of $T$ implies that both $c,d \neq 0$. Indeed, if $c = 0$ then $T = a \omega + du$ and $U_T := \kappa - T = (\kappa - a \omega) - du$ would be a non-trivial unit in $k[x^{\pm 1},y^{\pm 1},z^{\pm 1}][u]$. A similar argument as the one given in the proof of Proposition \ref{proposition-omega.conjugate.to.unit} implies that $U_T$ could not be non-trivial, giving a contradiction. Similarly, we can prove that $d \neq 0$. The criteria are obtained by considering the equation
$$T^2 = \omega^2$$
and writing it using explicit bases of $\calQ$ over $\calZ$ which are convenient for computations. Let us start by introducing such bases.

The first column of the following table introduces a basis of $\calL$ over $\calZ$ convenient for studying the coefficient $c$.

\begin{table}[h]
\renewcommand*{\arraystretch}{1.4}
\centering
\begin{tabular}{c|c|c|c|c|c|}
\cline{2-6}
 & ${}^s$ & ${ }^t$ & ${ }^u$ & $\gamma_i^2 \bar x y$ & $(\gamma_it)^*$ \\
\hline
\hline
\multicolumn{1}{|c||}{$\gamma_1 := (1+x)(1-\bar y)$} & $-y$ & $\phantom{-}\bar x$ & $-\bar x y$ & $4(\kappa_x+1)(\kappa_y-1)$ & $-\gamma_1 t$ \\
\hline
\multicolumn{1}{|c||}{$\gamma_2 := (1-x)(1+\bar y)$} & $\phantom{-}y$ & $-\bar x$ & $-\bar x y$ & $4(\kappa_x-1)(\kappa_y+1)$ & $\phantom{-}\gamma_2 t$ \\
\hline
\multicolumn{1}{|c||}{$\gamma_3: = (1+x)(1+\bar y)$} & $\phantom{-}y$ & $\phantom{-}\bar x$ & $\phantom{-}\bar x y$ & $4(\kappa_x+1)(\kappa_y+1)$ & $\phantom{-}\gamma_3 t$ \\
\hline
\multicolumn{1}{|c||}{$\gamma_4 := (1-x)(1-\bar y)$} & $-y$ & $-\bar x$ & $\phantom{-}\bar x y$ & $4(\kappa_x-1)(\kappa_y-1)$ & $-\gamma_4 t$ \\
\hline
\end{tabular}
\caption{First basis: $\gamma$-elements.}
\label{table-first.basis}
\end{table}


The columns 2-4 of Table \ref{table-first.basis} show how conjugation acts on the elements of the basis, e.g. $\gamma_1^s = -y \cdot \gamma_1$. Note that those conjugation relations actually prove that $\gamma_i$ form a basis. The final two columns are for a future reference. We briefly note that $y, \bar x$ and $\bar x y$ are \emph{not} central elements.

The second basis, for studying the coefficient $d$, is presented in the following table.

\begin{table}[h]
\renewcommand*{\arraystretch}{1.4}
\centering
\begin{tabular}{c|c|c|c|c|c|}
\cline{2-6}
 & ${}^s$ & ${ }^t$ & ${ }^u$ & $\delta_i^2 z$ & $(\delta_i u)^*$ \\
\hline
\hline
\multicolumn{1}{|c||}{$\delta_1 := (1-\bar z)\phantom{(x-\bar x)}$} & $-z$ & $-z$ & $\phantom{-}1$ & $2(\kappa_z-1)$ & $-\delta_1 u$ \\
\hline
\multicolumn{1}{|c||}{$\delta_2 := (x-\bar x)(1+\bar z)$} & $\phantom{-}z$ & $-z$ & $-1$ & $8\omega_x^2(\kappa_z+1)$ & $\phantom{-}\delta_2 u$ \\
\hline
\multicolumn{1}{|c||}{$\delta_3: = (1+\bar z)\phantom{(x-\bar x)}$} & $\phantom{-}z$ & $\phantom{-}z$ & $\phantom{-}1$ & $2(\kappa_z+1)$ & $\phantom{-}\delta_3 u$ \\
\hline
\multicolumn{1}{|c||}{$\delta_4 := (x-\bar x)(1-\bar z)$} & $-z$ & $\phantom{-}z$ & $-1$ & $8 \omega_x^2(\kappa_z-1)$ & $-\delta_4 u$ \\
\hline
\end{tabular}
\caption{Second basis: $\delta$-elements.}
\label{table-second.basis}
\end{table}


Let us also note the following relations for future reference.

\begin{equation}\label{equation-basis.relations}
\begin{aligned}
\frac{\gamma_2\delta_2}{\gamma_3\delta_3} = \frac{\gamma_4\delta_4}{\gamma_1\delta_1} & = \frac{2\omega_x(1-x)}{(1+x)} = \frac{2\omega_x(1-x^2)}{1+2x+x^2} = \frac{-4\omega_x^2}{2\kappa_x+2} = -2(\kappa_x-1), \\
\frac{\gamma_1\delta_2}{\gamma_4\delta_3} = \frac{\gamma_3\delta_4}{\gamma_2\delta_1} & = \frac{2\omega_x(1+x)}{1-x} = -2(\kappa_x+1).
\end{aligned}
\end{equation}

From now on, we write
$$c = c_1+c_2+c_3+c_4, \quad d = d_1+d_2+d_3+d_4,$$
where, for all $1 \leq i \leq 4$, we have $c_i = C_i \gamma_i$, $d_i =D_i\delta_i$ for some $C_i,D_i\in \calZ$. We start with a technical lemma.

\begin{lemma}\label{lemma-equivalence.Laurent}
We have $c \in k[x^{\pm 1},y^{\pm 1},z^{\pm 1}]$ if and only if each $c_i \in k[x^{\pm 1},y^{\pm 1},z^{\pm 1}]$. Similarly, $d \in k[x^{\pm 1},y^{\pm 1},z^{\pm 1}]$ if and only if each $d_i \in k[x^{\pm 1},y^{\pm 1},z^{\pm 1}]$.
\end{lemma}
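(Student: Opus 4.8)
The backward implications are trivial: since $k[x^{\pm 1},y^{\pm 1},z^{\pm 1}]$ is a ring and $c=c_1+c_2+c_3+c_4$ (resp.\ $d=d_1+d_2+d_3+d_4$), if every summand is a Laurent polynomial then so is the sum. The content is the forward direction, and the plan is to recover each $c_i$ (resp.\ $d_i$) explicitly from $c$ (resp.\ $d$) by exploiting the action of $H=\{\text{id},\alpha_s,\alpha_t,\alpha_u\}=\text{Gal}(\calL/\calZ)$ on the chosen bases.

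First I would use that the coefficients $C_i$ lie in $\calZ=\calL^H$ and are therefore fixed by every conjugation. Consequently, conjugating $c=\sum_i C_i\gamma_i$ by $s$, $t$, $u$ simply multiplies each term $c_i=C_i\gamma_i$ by the eigenvalue recorded in columns 2--4 of Table \ref{table-first.basis}. Clearing these eigenvalues (each a monomial, hence a unit of $k[x^{\pm 1},y^{\pm 1},z^{\pm 1}]$) gives the system
\begin{align*}
c &= c_1+c_2+c_3+c_4, \\
\bar y\,c^s &= -c_1+c_2+c_3-c_4, \\
x\,c^t &= c_1-c_2+c_3-c_4, \\
x\bar y\,c^u &= -c_1-c_2+c_3+c_4.
\end{align*}
The sign matrix here is Hadamard, so (as $\text{char}(k)\neq 2$ makes $4$ invertible) it can be inverted to express each $c_i$ as $\tfrac14$ times a $\Z$-linear combination of $c,\ \bar y\,c^s,\ x\,c^t,\ x\bar y\,c^u$. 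The $\delta$-case is identical: reading off Table \ref{table-second.basis} yields
\begin{align*}
d &= d_1+d_2+d_3+d_4, \\
\bar z\,d^s &= -d_1+d_2+d_3-d_4, \\
\bar z\,d^t &= -d_1-d_2+d_3+d_4, \\
d^u &= d_1-d_2+d_3-d_4,
\end{align*}
with multipliers $\bar z,\bar z,1$, again a Hadamard system.

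To finish, I would observe that each automorphism $\alpha_s=\iota_{yz}$, $\alpha_t=\iota_{xz}$, $\alpha_u=\iota_{xy}$ preserves $k[x^{\pm 1},y^{\pm 1},z^{\pm 1}]$ (it only inverts variables), and that the clearing factors $\bar y, x, x\bar y$ (resp.\ $\bar z, 1$) are units there. Hence if $c$ (resp.\ $d$) is a Laurent polynomial, every term on the right of the extraction formula is one too, and since $4$ is invertible each $c_i$ (resp.\ $d_i$) is a Laurent polynomial, which is exactly the claim. The one genuinely important point---the crux of the lemma---is that the conjugation eigenvalues of the basis elements are \emph{units} of $k[x^{\pm 1},y^{\pm 1},z^{\pm 1}]$ and not merely elements of $\calL$; this is what prevents the inversion from introducing real denominators, so that the a priori rational coefficients $C_i$ (resp.\ $D_i$) combine with the basis elements to give honest Laurent polynomials. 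Everything else reduces to inverting a $4\times4$ Hadamard matrix over a field of characteristic $\neq 2$, so I anticipate no further obstacle.
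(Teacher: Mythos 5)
Your proposal is correct and follows essentially the same route as the paper's proof: conjugate $c$ (resp.\ $d$) by $s,t,u$, use the fact that the coefficients $C_i, D_i \in \calZ$ are fixed by these conjugations and that the eigenvalues from Tables \ref{table-first.basis} and \ref{table-second.basis} are monomials (hence units of $k[x^{\pm 1},y^{\pm 1},z^{\pm 1}]$), and invert the resulting sign system using $\operatorname{char}(k)\neq 2$ to recover each $c_i$ (resp.\ $d_i$) as a $\tfrac14$-combination of Laurent polynomials. The only difference is cosmetic: you spell out the $d$-case and the unit/Hadamard observations explicitly, whereas the paper treats $d$ as analogous and leaves these points implicit.
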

\begin{proof}
We will deal with the coefficient $c$, being the other one $d$ completely analogous. After conjugation by $s,t,u$, we get
$$\begin{cases}
c = c_1 + c_2 + c_3 + c_4,\\
c^s = y(-c_1 + c_2 + c_3 - c_4), \\
c^t = \bar x(c_1 - c_2 + c_3 - c_4), \\
c^u = \bar x y(-c_1 - c_2 + c_3 + c_4).
\end{cases}$$
Therefore we get the formulas
\begin{equation}
\begin{aligned}
& c_1 = \frac{c - \bar y c^s + x c^t - x \bar y c^u}{4}, \quad c_2 = \frac{c + \bar y c^s - x c^t - x \bar y c^u}{4}, \\
& c_3 = \frac{c + \bar y c^s + x c^t + x \bar y c^u}{4}, \quad c_4 = \frac{c - \bar y c^s - x c^t + x \bar y c^u}{4}.
\end{aligned}
\end{equation}
From here the lemma easily follows, since $c \in k[x^{\pm 1},y^{\pm 1},z^{\pm 1}]$ if and only if $c^s,c^t,c^u \in k[x^{\pm 1},y^{\pm 1},z^{\pm 1}]$.
\end{proof}

The next proposition gives necessary and sufficient conditions for $T$ as in \eqref{equation-S.form} so that $T^2 \in \calZ$.

\begin{proposition}\label{proposition-equivalence.central}
Let $T$ be as in \eqref{equation-S.form}. Then the following conditions are equivalent.
\begin{enumerate}[(i),leftmargin=0.7cm]
\item $T^2 \in \calZ$.
\item We have
\begin{equation}\label{equation-condition.central}
c d^t \bar x \bar z + c^u d \bar y = 0.
\end{equation}
\item The equation
$$(\kappa-1)C_2C_4 - (\kappa+1)C_1C_3 = 0$$
is satisfied and (at least) one of the following is true:
\begin{enumerate}[($I$),leftmargin=0.7cm]
\item In case either $C_1 \neq 0$ or $C_4 \neq 0$, there exist $\alpha,\beta \in \calZ$ such that
\begin{align*}
D_1 = - 2\beta(\kappa-1) C_4, && D_4 = \beta C_1, && D_2 = \alpha C_4, && D_3 = - 2\alpha (\kappa+1)C_1;
\end{align*}
\item In case either $C_2 \neq 0$ or $C_3 \neq 0$, there exist $\alpha, \beta \in \calZ$ such that
\begin{align*}
D_1 = - 2\beta(\kappa+1) C_3, && D_4 = \beta C_2, && D_2 = \alpha C_3, && D_3 = - 2\alpha (\kappa-1)C_2.
\end{align*}
\end{enumerate}
\end{enumerate}
If any of these conditions hold, then
$$T^2 = a^2 \omega^2 + (c_1^2 -c_2^2 +c_3^2 -c_4^2)\bar x y  + (d_1^2 -d_2^2 +d_3^2 - d_4^2)z$$
and furthermore all nine summands are in $\calZ$.
\end{proposition}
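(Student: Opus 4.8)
The plan is to derive everything from a single expansion of $T^2$ in the crossed-product basis $\{1,s,t,u\}$ and then to read off centrality directly from the penultimate columns of Tables~\ref{table-first.basis} and \ref{table-second.basis}. First I would write $T = a\omega + ct + du$ and multiply out, using the cocycle values $\tau(t,t)=y$, $\tau(u,u)=z$, $\tau(t,u)=\bar x\bar z$, $\tau(u,t)=\bar y$ from Table~\ref{table-2.cocycle} together with $\omega^t=\omega^u=-\omega$. The four mixed products between $a\omega$ and $ct,du$ cancel in pairs, because $(a\omega)^t=(a\omega)^u=-a\omega$ while $a\in\calZ$ is central, leaving
$$T^2 = a^2\omega^2 + cc^t y + dd^u z + (cd^t\bar x\bar z + c^u d\bar y)s.$$
This is the same computation underlying the equivalence (i)$\Leftrightarrow$(ii); since we assume one of the equivalent conditions, \eqref{equation-condition.central} forces the coefficient of $s$ to vanish, so $T^2 = a^2\omega^2 + cc^t y + dd^u z \in\calL$. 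Note that $a^2\omega^2\in\calZ$ already, as $a\in\calZ$ and $\omega^2=\kappa^2-1\in\calZ$ by \eqref{equation-omega.kappa}.

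Next I would expand $cc^t$ and $dd^u$ in the two bases. The ${}^t$-column of Table~\ref{table-first.basis} gives $c^t=\bar x(c_1-c_2+c_3-c_4)$, so
$$cc^t y = (c_1^2-c_2^2+c_3^2-c_4^2)\bar x y + 2(c_1c_3-c_2c_4)\bar x y,$$
and likewise the ${}^u$-column of Table~\ref{table-second.basis} gives $d^u=d_1-d_2+d_3-d_4$, whence $dd^u z = (d_1^2-d_2^2+d_3^2-d_4^2)z + 2(d_1d_3-d_2d_4)z$. The \emph{diagonal} parts are exactly the eight non-leading summands in the claimed formula, and each is central: $c_i^2\bar x y = C_i^2(\gamma_i^2\bar x y)$ and $d_i^2 z = D_i^2(\delta_i^2 z)$, where $C_i,D_i\in\calZ$ by construction of the bases and the parenthesised factors are precisely the central values in the $\gamma_i^2\bar x y$ and $\delta_i^2 z$ columns of the tables (they lie in $\calK\subseteq\calZ$, using $\omega_x^2=\kappa_x^2-1\in\calK$ for the $\delta_2,\delta_4$ entries). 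Together with $a^2\omega^2$ this already establishes that the nine summands are central.

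It therefore only remains to show that the two \emph{cross terms} vanish, which is where the hypothesis enters essentially. Using $(1\pm x)^2=2x(\kappa\pm1)$ and $(1-\bar y^2)y=2\omega_y$ one gets $\gamma_1\gamma_3\bar x y=4(\kappa+1)\omega_y$ and $\gamma_2\gamma_4\bar x y=4(\kappa-1)\omega_y$, so the $c$-cross term equals $8\omega_y\bigl[(\kappa+1)C_1C_3-(\kappa-1)C_2C_4\bigr]$; analogously $\delta_1\delta_3 z=2\omega_z$ and $\delta_2\delta_4 z=8\omega_x^2\omega_z$ give the $d$-cross term as $4\omega_z\bigl[D_1D_3-4\omega_x^2 D_2D_4\bigr]$. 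Thus $T^2$ is the sum of a $\calZ$-multiple of $1$ and $\calZ$-multiples of $\omega_y$ and $\omega_z$ only. Since $\{1,\omega_x,\omega_y,\omega_z\}$ is a $\calZ$-basis of $\calL$ (Corollary~\ref{corollary-galois.promislow}) and $T^2\in\calZ$ by condition (i), the coefficients of $\omega_y$ and $\omega_z$ must both vanish; as $\mathrm{char}(k)\neq2$ this yields $(\kappa+1)C_1C_3=(\kappa-1)C_2C_4$ and $D_1D_3=4\omega_x^2 D_2D_4$, which is exactly the vanishing of the two cross terms and which recovers the first equation of (iii) together with the relation forced by its $\alpha,\beta$-parametrisation. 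Hence both cross terms drop out, and the displayed formula for $T^2$ follows. The main obstacle is the bookkeeping in this last step: one must translate the basis products $\gamma_i\gamma_j$, $\delta_i\delta_j$ into central coordinates via $(1\pm x)^2=2x(\kappa\pm1)$ and $\kappa^2-\omega^2=1$, after which the argument collapses to the $\calZ$-linear independence of $\omega_y$ and $\omega_z$.
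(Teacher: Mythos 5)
Your expansion of $T^2$, the cancellation of the mixed $a\omega$-terms, and the product formulas $\gamma_1\gamma_3\bar x y = 4(\kappa+1)\omega_y$, $\gamma_2\gamma_4\bar x y = 4(\kappa-1)\omega_y$, $\delta_1\delta_3 z = 2\omega_z$, $\delta_2\delta_4 z = 8\omega_x^2\omega_z$ are all correct, and reading off centrality via the $\calZ$-basis $\{1,\omega_x,\omega_y,\omega_z\}$ of $\calL$ is a clean way to organise that step. But there is a genuine gap: the proposition asserts the \emph{equivalence} of (i), (ii) and (iii), and condition (iii) never actually enters your argument. What you prove is (i) $\Rightarrow$ (ii) (uniqueness of the $\calL$-coefficients in the basis $\{1,s,t,u\}$ of $\calQ$) and, assuming (i), the vanishing of the two cross terms and hence the displayed formula for $T^2$. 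The implications (ii) $\Rightarrow$ (i), (ii) $\Leftrightarrow$ (iii) and (iii) $\Rightarrow$ (i) are all missing; in the paper they rest on the step you skipped entirely, namely conjugating \eqref{equation-condition.central} by $s$, $t$, $u$ to obtain the bilinear system $c_1d_1 = c_4d_4$, $c_1d_2 = c_4d_3$, $c_2d_1 = c_3d_4$, $c_2d_2 = c_3d_3$, rewriting it via \eqref{equation-basis.relations} as two homogeneous linear systems in $(D_2,D_3)$ and $(D_1,D_4)$, and extracting the determinant condition together with the parametrisations (I)/(II). Relatedly, your phrase ``since we assume one of the equivalent conditions'' is circular: at that point the equivalence is precisely what remains to be proved.

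The gap is not merely one of omission: your closing claim, that vanishing of the two cross terms ``recovers the first equation of (iii) together with the relation forced by its $\alpha,\beta$-parametrisation'', is false. Cross-term vanishing says only
$$(\kappa+1)C_1C_3 = (\kappa-1)C_2C_4 \qquad\text{and}\qquad D_1D_3 = 4\omega_x^2 D_2D_4,$$
and this pair of identities is strictly weaker than (iii), because it does not couple the $C_i$ to the $D_j$. Take for instance $C_1 = D_1 = 1$ and $C_2=C_3=C_4=D_2=D_3=D_4=0$. Both identities hold, as does the first equation of (iii); but case (I) of (iii) applies (as $C_1\neq 0$) and forces $D_1 = -2\beta(\kappa-1)C_4 = 0$, a contradiction, so (iii) fails. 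Consistently, (i) and (ii) fail as well: using $\delta_1^t = -z\,\delta_1$ and $\gamma_1^u = -\bar x y\,\gamma_1$ from Tables \ref{table-first.basis} and \ref{table-second.basis}, the coefficient of $s$ in $T^2$ is $c_1d_1^t\bar x\bar z + c_1^ud_1\bar y = -2\bar x\,c_1d_1 \neq 0$. So one cannot pass between (i) and (iii) through the cross terms alone; the linear-algebra analysis relating the $C$'s and $D$'s is indispensable, and without it your argument proves only the implication (i) $\Rightarrow$ (ii) and the final display under hypothesis (i).
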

\begin{proof}
In this proof we will freely use the conjugation tables \eqref{table-first.basis} and \eqref{table-second.basis}.

For the implication $(i) \Rightarrow (ii)$, we simply compute
$$T^2 = (a \omega + ct + du)^2 = (a^2 \omega^2 + cc^ty + dd^uz) + (cd^t \bar x \bar z +c^ud \bar y)s.$$
If we require $T^2 \in \calZ$, we necessarily need
$$cd^t \bar x \bar z +c^ud \bar y = 0,$$
which is the statement in $(ii)$.

Now we show the equivalence $(ii) \Leftrightarrow (iii)$. We have
\begin{align*}
& c^u = \bar x y (-c_1-c_2+c_3+c_4), \\
& d^t = z(-d_1-d_2+d_3+d_4),
\end{align*}
thus equation \eqref{equation-condition.central} becomes
\begin{align*}
0 & = c d^t \bar x y \bar z + c^u d \\
& = (c_1+c_2+c_3+c_4)(-d_1-d_2+d_3+d_4) \bar x y + (-c_1-c_2+c_3+c_4)(d_1+d_2+d_3+d_4) \bar x y \\
& = 2 \bar x y [(c_3+c_4)(d_3+d_4) - (c_1+c_2)(d_1+d_2)].
\end{align*}
This is equivalent to the equation
$$(c_1 + c_2) (d_1 + d_2)  =  (c_3 + c_4) (d_3 + d_4).$$
By conjugating this last equation with $s$, $t$ and $u$, we get the following system of non-linear equations:
$$\begin{cases}
(c_1 + c_2) (d_1 + d_2)  =  (c_3 + c_4) (d_3 + d_4), \\
(c_1 - c_2) (d_1 - d_2)  =  (c_3 - c_4) (d_3 - d_4), \\
(-c_1 + c_2) (d_1 + d_2)  =  (c_3 - c_4) (d_3 + d_4), \\
(c_1 + c_2) (-d_1 + d_2)  =  (c_3 + c_4) (d_3 - d_4).
\end{cases}$$
This system is equivalent to the following system of equations:
$$\begin{cases}
c_1d_1 - c_4d_4 = 0, \\
c_1d_2 - c_4d_3 = 0, \\
c_2d_1 - c_3d_4 = 0, \\
c_2d_2 - c_3d_3 = 0.
\end{cases}$$
Using the definition of the $c_i$'s and the $d_i$'s, and the relations \eqref{equation-basis.relations}, those equations are equivalent to the following equations:
\begin{align*}
2(\kappa-1) C_2D_2 + C_3D_3 &= 0
&
C_1D_1+2(\kappa-1) C_4D_4 &=0
\\
2(\kappa+1)C_1D_2 + C_4D_3 &= 0
&
C_2D_1 + 2(\kappa+1)C_3D_4 &=0
\end{align*}
Note that, given $C_1,C_2,C_3,C_4$, these are in fact two systems of equations, the first system to determine $D_2,D_3$ and the other system to determine $D_1,D_4$. Thus the conclusion is that equation \eqref{equation-condition.central} is equivalent to the condition that the determinants of the above systems vanish, i.e.
$$(\kappa-1)C_2C_4 - (\kappa+1)C_1C_3 = 0,$$
together with the statements (I) and (II) of the proposition, depending on the non-vanishing of the $C_i$'s.

Finally we show $(iii) \Rightarrow (i)$. We have already shown that $(iii)$ implies the vanishing of the second term in
$$T^2 = (a \omega + ct + du)^2 = (a^2 \omega^2 + cc^ty + dd^uz) + (cd^t \bar x \bar z +c^ud \bar y)s.$$
Thus it is enough to show that $(iii)$ also implies that the term $cc^ty + dd^uz$ is central. We have
\begin{align*}
& c^t = \bar x (c_1-c_2+c_3-c_4), \\
& d^u = d_1-d_2+d_3-d_4,
\end{align*}
therefore
\begin{align*}
cc^ty + dd^uz & = (c_1+c_2+c_3+c_4)(c_1-c_2+c_3-c_4) \bar x y + (d_1+d_2+d_3+d_4)(d_1-d_2+d_3-d_4) z \\
& = \bar x y (c_1^2 - c_2^2 + c_3^2 - c_4^2) + z(d_1^2 - d_2^2 + d_3^2 - d_4^2) + 2 \bar x y (c_1c_3 - c_2c_4) + 2z(d_1d_3 - d_2d_4).
\end{align*}
Let us now argue that $c_1c_3-c_2c_4 =d_1d_3-d_2d_4 = 0$. We have, on one hand,
\begin{align*}
c_1c_3 - c_2c_4 & = C_1C_3\gamma_1\gamma_3 - C_2C_4\gamma_2\gamma_4 \\
& = (1-\bar y^2)[(1+x^2+2x)C_1C_3 - (1+x^2-2x)C_2C_4] \\
& = 2x(1-\bar y^2)[(\kappa+1)C_1C_3 - (\kappa-1)C_2C_4] = 0.
\end{align*}
On the other hand,
\begin{align*}
d_1d_3 - d_2d_4 & = D_1D_3 \delta_1 \delta_3 - D_2D_4 \delta_2 \delta_4 \\
& = (1- \bar z^2)(D_1D_3 - 4 \omega^2 D_2D_4).
\end{align*}
In case (I), this equals
$$4(1-\bar z^2) \alpha \beta C_1C_4 [ (\kappa+1)(\kappa-1) - \omega^2] = 0,$$
and in case (II), we get
$$4(1-\bar z^2) \alpha \beta C_2C_3 [ (\kappa+1)(\kappa-1) - \omega^2] = 0.$$
We thus obtain the expression
$$T^2 = a^2 \omega^2 + (c_1^2 -c_2^2 +c_3^2 -c_4^2)\bar x y  + (d_1^2 -d_2^2 +d_3^2 - d_4^2)z.$$
The fact that all nine summands are in $\calZ$, i.e. that each $\gamma_i^2 \bar x y, \delta_i^2z \in \calZ$, follows from looking at the second last columns of Tables \eqref{table-first.basis} and \eqref{table-second.basis}. This proves the implication $(iii) \Rightarrow (i)$, and the proposition.
\end{proof}

We are now ready to present our main theorems.

\begin{theorem}\label{theorem-criterion.unit}
There exists a non-trivial unit $U \in k[G]$ of type $2$ if and only if the system of equations presented below have a solution
$$(A,C_1,C_2,C_3,C_4,D_1,D_2,D_3,D_4,\alpha,\beta) \in \calZ^{11}$$
different from $(\pm 1,0,0,0,0,0,0,0,0,0,0)$, such that $A \in k[\kappa_x,\kappa_y,\kappa_z,\xi]$ and $C_i\gamma_i, D_i\delta_i \in k[x^{\pm 1},y^{\pm 1},z^{\pm 1}]$.
\begin{align*}
\diamond \text{ } & (A^2-1)\omega_x^2 + C_1^2(\kappa_x+1)(\kappa_y-1)-C_2^2(\kappa_x-1)(\kappa_y+1) + C_3^2(\kappa_x+1)(\kappa_y+1)-C_4^2(\kappa_x-1)(\kappa_y-1) \\
& \phantom{(A^2-1)\omega_x^2} + 2D_1^2(\kappa_z-1)-2D_2^2\omega_x^2(\kappa_z+1) + 2D_3^2(\kappa_z+1)-2D_4^2\omega_x^2(\kappa_z-1) = 0, \\
\diamond \text{ } & (\kappa_x-1)C_2C_4 - (\kappa_x+1)C_1C_3 = 0, \\
\diamond \text{ } &  \text{ In case $C_1 \neq 0$ or $C_4 \neq 0$}, \begin{cases} D_1 = \beta (1 - \kappa_x) C_4, \\ D_2 = \alpha C_4, \\ D_3 = -\alpha (1 + \kappa_x) C_1, \\ D_4 = \beta C_1. \end{cases} \text{ In case $C_2 \neq 0$ or $C_3 \neq 0$}, \begin{cases} D_1 = -\beta (1 + \kappa_x) C_3, \\ D_2 = \alpha C_3, \\ D_3 = \alpha (1 - \kappa_x) C_2, \\ D_4 = \beta C_2. \end{cases} 
\end{align*}
\end{theorem}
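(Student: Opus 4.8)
The plan is to recognise the displayed system as the coordinate form of a single condition, namely ``$S$ is a non-trivial $\omega$-conjugate of the shape \eqref{equation-S.form}'', and to bridge between units and such $S$ by Corollary \ref{corollary-special.22}. Thus the first step is to quote Corollary \ref{corollary-special.22}(a): a non-trivial unit of type $2$ exists in $k[G]$ if and only if there is a non-trivial $\omega$-conjugate
$$S = A\omega + ct + du \in k[G], \qquad A \in k[\kappa_x,\kappa_y,\kappa_z,\xi],\quad c,d \in k[x^{\pm 1},y^{\pm 1},z^{\pm 1}],$$
with $\deg_\omega(S) = (2,2)$. (Here $A$ is the scalar denoted $\alpha$ in Corollary \ref{corollary-special.22}, which should not be confused with the auxiliary parameter $\alpha$ appearing in the case conditions of the theorem.) It therefore suffices to characterise in coordinates the existence of such an $S$.

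The second step is to pass to the two bases of Tables \ref{table-first.basis} and \ref{table-second.basis}. Writing $c = \sum_{i=1}^4 C_i\gamma_i$ and $d = \sum_{i=1}^4 D_i\delta_i$ with $C_i,D_i \in \calZ$, Lemma \ref{lemma-equivalence.Laurent} shows that Laurent-integrality of $c,d$ is exactly the condition $C_i\gamma_i,\, D_i\delta_i \in k[x^{\pm1},y^{\pm1},z^{\pm1}]$ in the statement, while $A \in k[\kappa_x,\kappa_y,\kappa_z,\xi]$ is the membership $A \in \mathrm{Z}(k[G])$ of Lemma \ref{lemma-center.of.kG}. The defining relation $S^2 = \omega^2$ I would split in two. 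First, $S^2 \in \calZ$ is precisely Proposition \ref{proposition-equivalence.central}: its condition (iii) is the second displayed equation $(\kappa_x-1)C_2C_4 - (\kappa_x+1)C_1C_3 = 0$ together with the two alternatives (I)/(II) for the $D_i$, and under it Proposition \ref{proposition-equivalence.central} supplies the closed form
$$S^2 = A^2\omega^2 + (c_1^2-c_2^2+c_3^2-c_4^2)\bar x y + (d_1^2-d_2^2+d_3^2-d_4^2)z.$$
Second, imposing $S^2 = \omega^2 = \omega_x^2$ and inserting the values of $\gamma_i^2\bar x y$ and $\delta_i^2 z$ read off from the columns headed $\gamma_i^2\bar x y$ and $\delta_i^2 z$ of Tables \ref{table-first.basis} and \ref{table-second.basis} turns $(A^2-1)\omega^2 + \dots = 0$ into the first displayed equation; clearing the resulting common powers of $2$, which amounts to a harmless rescaling of $C_i,D_i,\alpha,\beta$, produces the stated constants. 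The degree condition is then automatic: since $S_1 = A\omega$ gives $S_1^2 = A^2\omega^2 \in \calZ$, once $S$ is a non-trivial $\omega$-conjugate Propositions \ref{proposition-omega.two.groups} and \ref{proposition-two.groups} force $\deg_\omega(S) = (2,2)$.

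With these steps the two implications are short. For ``only if'', a non-trivial unit yields via Corollary \ref{corollary-special.22} such an $S$; non-triviality forces $c\neq 0$, so some $C_i\neq 0$ and the resulting tuple differs from $(\pm1,0,\dots,0)$. For ``if'', a solution tuple reassembles into $c,d$ and then into $S = A\omega+ct+du \in k[G]$; running the equivalences of Proposition \ref{proposition-equivalence.central} backwards gives $S^2\in\calZ$ with the closed form above, and the first equation upgrades this to $S^2=\omega^2$, so $S$ is an $\omega$-conjugate, whence $U_S := \kappa - S$ is the desired unit by Proposition \ref{proposition-omega.conjugate.to.unit}.

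I expect the main obstacle to lie not in the substitution, which is essentially contained in Proposition \ref{proposition-equivalence.central}, but in matching non-triviality exactly, i.e. in checking that a solution tuple $\neq(\pm1,0,\dots,0)$ really yields a \emph{non-trivial} $S$. The delicate degeneracies are $c=0$ and $d=0$. A tuple with all $C_i=0$ satisfies neither case (I) nor (II) and so is not a solution, which excludes $c=0$; and a solution with $c\neq 0$ but $d=0$ would produce a non-trivial $\omega$-conjugate of the form $A\omega + ct$, that is of shape $a+bt$, which is impossible by Remark \ref{remark-on.omega.conjugates}(2). Pinning these down, so that the excluded tuple $(\pm1,0,\dots,0)$ corresponds exactly to the trivial conjugates $S=\pm\omega$ and every other solution to a genuine non-trivial one, is the point I would treat most carefully.
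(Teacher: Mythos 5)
Your proposal is correct and follows essentially the same route as the paper's own proof: Corollary \ref{corollary-special.22} to reduce to $\omega$-conjugates of the form $A\omega + ct + du$, Lemmas \ref{lemma-equivalence.Laurent} and \ref{lemma-center.of.kG} for the integrality conditions, Proposition \ref{proposition-equivalence.central} together with the rescaling $2C_i \mapsto C_i$, $2D_2 \mapsto D_2$, $2D_4 \mapsto D_4$ to obtain the displayed system, and Propositions \ref{proposition-omega.two.groups} and \ref{proposition-two.groups} to see that the $\omega$-degree $(2,2)$ comes for free from $S_1 = A\omega$. If anything, your handling of the degenerate cases (excluding $c=0$ via the case conditions, and $d=0$ via Remark \ref{remark-on.omega.conjugates}) is more explicit than the paper's, which simply asserts that a solution tuple different from $(\pm 1,0,\dots,0)$ yields a non-trivial $T$.
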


\begin{remarks}\label{remarks-criterion.unit}
\text{ }
\begin{enumerate}[1),leftmargin=0.7cm]
\item We have preferred to write the equations in terms of the old notation $\omega_x,\kappa_x$ as it seems to be the most convenient form to reflect the explicit appearance of those terms in comparison with the other elements $\kappa_y,\kappa_z$. Nevertheless, we will continue to use the notation $\omega,\kappa$ for the proof of the theorem.
\item As stated, the criterion in Theorem \ref{theorem-criterion.unit} includes 6 equations and 11 variables. However, the substitutions given by the last $4$ equations reduce it to a system of 2 equations with $7$ variables $(A,C_1,C_2,C_3,C_4,\alpha,\beta)$. The second equation can be used to reduce further to a single equation with $6$ variables. We chose to state it in an ``extended'' form as it seems that it is the form most amenable to further analysis.
\item One can also use the conclusion of Corollary \ref{corollary-special.22.2} to prove Theorem \ref{theorem-criterion.unit}.
\end{enumerate}
\end{remarks}

\begin{proof}[Proof of Theorem \ref{theorem-criterion.unit}]
By Corollary \ref{corollary-special.22}, the existence of a non-trivial unit $U \in k[G]$ of type $2$ is equivalent to the existence of a non-trivial $\omega$-conjugate $T \in k[G]$ with $\omega$-degree $(2,2)$ of the form
\begin{equation}
T = A \omega + ct + du,
\end{equation}
where $A \in k[\kappa_x,\kappa_y,\kappa_z,\xi]$ and $c,d \in k[x^{\pm 1},y^{\pm 1}, z^{\pm 1}]$. We have already noted that non-triviality of $T$ implies that both $c,d \neq 0$.

So first suppose that such $T$ exists. In particular $T^2 = \omega^2 \in \calZ$, and so Proposition \ref{proposition-equivalence.central} tells us that all of the equations of the statement of the theorem, excluding the first one, follow. But the first one also follows since
\begin{align*}
\omega^2 = T^2 & = A^2 \omega^2 + (c_1^2-c_2^2+c_3^2-c_4^2)\bar x y + (d_1^2 - d_2^2 + d_3^2 - d_4^2)z \\
& = A^2 \omega^2 + C_1^2 \gamma_1^2 \bar x y - C_2^2 \gamma_2^2 \bar x y + C_3^2 \gamma_3^2 \bar x y - C_4^2 \gamma_4^2 \bar x y \\
& \qquad + D_1^2 \delta_1^2 z - D_2^2 \delta_2^2 z + D_3^2 \delta_3^2 z - D_4^2 \delta_4^2 z \\
& = A^2 \omega^2 + 4C_1^2(\kappa+1)(\kappa_y-1)-4C_2^2(\kappa-1)(\kappa_y+1)+4C_3^2(\kappa+1)(\kappa_y+1)-4C_4^2(\kappa-1)(\kappa_y-1) \\
& \qquad + 2D_1^2(\kappa_z-1)-8D_2^2\omega^2(\kappa_z+1)+2D_3^2(\kappa_z+1)-8D_4^2\omega^2(\kappa_z-1),
\end{align*}
which is the first equation after a change of variables $2C_i \ra C_i$ and $2D_2 \ra D_2, 2D_4 \ra D_4$. Note that the remaining equations of the theorem have been also changed accordingly under these changes of variables. Lastly, all of the variables $A,C_i,D_i,\alpha,\beta$ belong to $\calZ$, and moreover $A \in k[\kappa_x,\kappa_y,\kappa_z,\xi]$ and each $c_i = C_i \gamma_i, d_i = D_i \delta_i \in k[x^{\pm 1},y^{\pm 1},z^{\pm 1}]$ by Lemma \ref{lemma-equivalence.Laurent}. Finally, since both $c,d \neq 0$, at least some $c_i,d_j \neq 0$, and hence the solution obtained is different from the solution $(\pm 1,0,0,0,0,0,0,0,0,0,0)$.

Conversely, if such a solution of the stated system of equations exists, then the element
$$T = A \omega + \Big(\frac{C_1}{2}\gamma_1 + \frac{C_2}{2}\gamma_2 + \frac{C_3}{2}\gamma_3 + \frac{C_4}{2}\gamma_4 \Big) t + \Big(D_1\delta_1 + \frac{D_2}{2}\delta_2 + D_3\delta_3 + \frac{D_4}{2}\delta_4\Big) u$$
belongs to $k[G]$, and a straightforward computation using the equations of the statement of the theorem shows that $T^2 = \omega^2$, hence central. Clearly the $\omega$-degree of $T$ is $(2,2)$, and non-triviality of $T$ follows from the non-triviality of the solution obtained, i.e. different from $(\pm 1,0,0,0,0,0,0,0,0,0,0)$.
\end{proof}

We now present a variant of the previous theorem for anti-selfadjoint $\omega$-conjugates, in case the involution of $k$ restricts to the identity on its prime subfield.

\begin{theorem}\label{theorem-criterion.unitary}
Supose that the involution of $k$ restricts to the identity on its prime subfield. Then there exists a non-trivial unitary $U \in k[G]$ of type $2$ if and only if the system of equations from Theorem \ref{theorem-criterion.unit} has a solution
$$(A,C_1,C_2\xi,C_3\xi,C_4,D_1,D_2\xi,D_3\xi,D_4,\alpha,\beta) \in \calK^9 \times \calZ^2$$
different from $(\pm 1,0,0,0,0,0,0,0,0,0,0)$, such that $A \in k[\kappa_x,\kappa_y,\kappa_z]$ and $C_i\gamma_i, D_i\delta_i \in k[x^{\pm 1},y^{\pm 1},z^{\pm 1}]$.
\end{theorem}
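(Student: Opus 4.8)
The plan is to run the proof of Theorem \ref{theorem-criterion.unit} essentially verbatim and then bolt on the extra information coming from anti-selfadjointness. Concretely, I would first invoke Corollary \ref{corollary-special.22}(b): a non-trivial unitary $U \in k[G]$ of type $2$ exists if and only if there is a non-trivial \emph{anti-selfadjoint} $\omega$-conjugate $S \in k[G]$ with $\deg_\omega(S) = (2,2)$ of the special form $S = A\omega + ct + du$, where $A \in k[\kappa_x,\kappa_y,\kappa_z,\xi]$ and $c,d \in k[x^{\pm1},y^{\pm1},z^{\pm1}]$. Writing $c = \sum_i C_i\gamma_i$ and $d = \sum_i D_i\delta_i$ as before, the condition $S^2 = \omega^2 \in \calZ$ is exactly the hypothesis analysed in Proposition \ref{proposition-equivalence.central}, so it translates, word for word as in the proof of Theorem \ref{theorem-criterion.unit} (and after the same rescalings $2C_i \to C_i$, $2D_2 \to D_2$, $2D_4 \to D_4$), into the displayed system of equations. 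Thus the only genuinely new point is to understand what the single extra relation $S^* = -S$ imposes on the coefficients $A, C_i, D_i$, and to check that it cuts out precisely the subfields $\calK$ and $\calK\xi$ appearing in the statement.

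For that step I would compute $S^*$ directly in the basis, reading off the final columns of Tables \ref{table-first.basis} and \ref{table-second.basis}, which record the adjoints $(\gamma_i t)^*$ and $(\delta_i u)^*$. These show that $\gamma_1 t,\gamma_4 t,\delta_1 u,\delta_4 u$ are anti-selfadjoint while $\gamma_2 t,\gamma_3 t,\delta_2 u,\delta_3 u$ are selfadjoint. Combining this with $\omega^* = -\omega$ and the fact that each coefficient $A, C_i, D_i$ lies in the center $\calZ$ (so commutes with everything, and its adjoint is again central), expanding $S^* = -S$ and matching coefficients against the relevant $\calZ$-bases yields
\[
A^* = A,\ \ C_1^* = C_1,\ \ C_4^* = C_4,\ \ D_1^* = D_1,\ \ D_4^* = D_4,
\]
\[
C_2^* = -C_2,\ \ C_3^* = -C_3,\ \ D_2^* = -D_2,\ \ D_3^* = -D_3.
\]
In words: $A$ together with the $\gamma_1,\gamma_4,\delta_1,\delta_4$-coefficients must be selfadjoint, while the $\gamma_2,\gamma_3,\delta_2,\delta_3$-coefficients must be anti-selfadjoint.

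It remains to translate these selfadjoint/anti-selfadjoint conditions into the field memberships of the statement. Here I would use Proposition \ref{proposition-galois.promislow}(iii), by which $\calZ = \calK(\xi)$ with $\calK = k(\kappa_x,\kappa_y,\kappa_z)$ and $\xi^2 \in \calK$, together with the relations $\kappa_g^* = \kappa_g$ and $\xi^* = -\xi$. (It is precisely here that the hypothesis ``$*$ is the identity on the prime subfield'' enters, to fix the scalar $\tfrac12$ appearing in $\kappa_g$ and $\omega_g$.) Since the involution then acts on $\calZ$ as the non-trivial $\calK$-automorphism that fixes $\calK$ and negates $\xi$, selfadjointness corresponds to membership in $\calK$ and anti-selfadjointness to membership in $\calK\xi$. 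Feeding this back gives $A, C_1, C_4, D_1, D_4 \in \calK$ and $C_2, C_3, D_2, D_3 \in \calK\xi$, i.e. $C_2\xi, C_3\xi, D_2\xi, D_3\xi \in \calK$, which is exactly the assertion that the solution tuple lies in $\calK^9 \times \calZ^2$. The refinements $A \in k[\kappa_x,\kappa_y,\kappa_z]$ and $C_i\gamma_i, D_i\delta_i \in k[x^{\pm1},y^{\pm1},z^{\pm1}]$ come along for free from $S \in k[G]$ via Lemma \ref{lemma-equivalence.Laurent} and Proposition \ref{proposition-equalities}, just as in Theorem \ref{theorem-criterion.unit}, and non-triviality again corresponds to excluding $(\pm1,0,\dots,0)$. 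Conversely, any solution of this constrained shape reassembles into an $S$ with $S^2 = \omega^2$ and $S^* = -S$, producing a non-trivial unitary by Proposition \ref{proposition-omega.conjugate.to.unit}(iv). I expect the main obstacle to be the careful adjoint bookkeeping through the crossed-product multiplication — getting every sign in the tables' adjoint columns right and confirming that selfadjointness on $\calZ$ genuinely coincides with $\calK$-membership under the stated hypothesis on $*$; everything else is inherited intact from Theorem \ref{theorem-criterion.unit}.
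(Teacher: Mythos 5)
Your proposal is correct and follows essentially the same route as the paper: reduce to an anti-selfadjoint $\omega$-conjugate of the special form via Corollary \ref{corollary-special.22}(b), reuse the proof of Theorem \ref{theorem-criterion.unit} for the condition $S^2 = \omega^2$, and then read off $S^* = -S$ coefficientwise from the adjoint columns of Tables \ref{table-first.basis} and \ref{table-second.basis}, landing on exactly the paper's relations ($A, C_1, C_4, D_1, D_4$ selfadjoint; $C_2, C_3, D_2, D_3$ anti-selfadjoint) and converting them into $\calK$- versus $\calK\xi$-membership via the decomposition $\calZ = \calK \oplus \calK\xi$, which is precisely the content of the paper's Lemma \ref{lemma-involution.center}. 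The only caveat is that identifying selfadjoint elements of $\calZ$ with $\calK$ requires the involution to fix all of $k$ (not merely the prime subfield), but this is a gap shared with, not introduced beyond, the paper's own argument.
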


For its proof, we first need some observations, which we will gather in the following lemma. Recall that $\calZ = \calK(\xi)$, which is an extension of degree $2$ over $\calK = k(\kappa_x,\kappa_y,\kappa_z)$.

\begin{lemma}\label{lemma-involution.center}
Suppose that the involution of $k$ is the identity. Then $\calZ^{\ast} = \calZ$, and moreover $z^{\ast} = z$ for any $z \in \calK$. Also, the following hold true.
\begin{enumerate}[i),leftmargin=0.7cm]
\item The element $z$ belongs to $\calK$ if and only if $z^{\ast} = z$.
\item The element $z \xi$ belongs to $\calK$ if and only if $z^{\ast} = - z$.
\end{enumerate}
\end{lemma}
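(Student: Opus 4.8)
The plan is to reduce everything to understanding how the involution acts on the subfield $\calL = k(x,y,z)$, and then to exploit the degree-two decomposition $\calZ = \calK(\xi)$ established in Proposition \ref{proposition-galois.promislow}(iii).

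First I would pin down the involution on $\calL$. Since $g^{\ast} = \ol g$ for every $g \in N$ and $\ast$ is the identity on $k$, on the generators we have $x^{\ast} = \ol x$, $y^{\ast} = \ol y$, $z^{\ast} = \ol z$; as $\calL$ is commutative, the anti-automorphism $\ast$ restricts to the field automorphism $\iota_{xyz} = \iota_x\iota_y\iota_z$ of $\calL$ inverting all three variables. From the defining formulas this gives immediately $\kappa_g^{\ast} = \kappa_g$ and $\omega_g^{\ast} = -\omega_g$ for $g \in \{x,y,z\}$. Consequently $\ast$ fixes $\calK = k(\kappa_x,\kappa_y,\kappa_z)$ pointwise---which is exactly the asserted identity $z^{\ast} = z$ for $z \in \calK$---while $\xi^{\ast} = (\omega_x\omega_y\omega_z)^{\ast} = (-\omega_x)(-\omega_y)(-\omega_z) = -\xi$. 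Since $\calK$ is fixed and $\xi \mapsto -\xi$, the field $\calZ = \calK(\xi)$ is carried into itself, so $\calZ^{\ast} = \calZ$.

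Next I would write an arbitrary $z \in \calZ$ in the $\calK$-basis $\{1,\xi\}$, namely $z = p + q\xi$ with $p,q \in \calK$ uniquely determined (Proposition \ref{proposition-galois.promislow}(iii)). Using $p^{\ast} = p$, $q^{\ast} = q$, $\xi^{\ast} = -\xi$ together with the commutativity of $\calL$, a one-line computation gives $z^{\ast} = p^{\ast} + \xi^{\ast}q^{\ast} = p - q\xi$. Statement (i) then follows at once: $z^{\ast} = z$ forces $2q\xi = 0$, hence $q = 0$ because $\operatorname{char}(k) \neq 2$ and $\xi \neq 0$, i.e. $z \in \calK$; the converse is the pointwise-fixing already noted. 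For statement (ii) I would record that $\xi^2 = (\kappa_x^2-1)(\kappa_y^2-1)(\kappa_z^2-1) \in \calK$ (as in the proof of Proposition \ref{proposition-galois.promislow}), so that $z\xi = q\xi^2 + p\xi$ has $\calK$-part $q\xi^2$ and $\xi$-part $p$; thus $z\xi \in \calK$ if and only if $p = 0$, and from $z^{\ast} = p - q\xi$ one sees $z^{\ast} = -z$ is equivalent to $2p = 0$, i.e. to $p = 0$. Hence $z\xi \in \calK \iff z^{\ast} = -z$.

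There is no serious obstacle here; the only point requiring care is the bookkeeping around $\ast$ being an anti-automorphism rather than an automorphism. This is harmless precisely because all the elements involved ($p$, $q$, $\xi$, and their images) lie in the commutative field $\calL$, so products may be reordered freely; I would state this explicitly to justify $(q\xi)^{\ast} = \xi^{\ast} q^{\ast} = -q\xi$. It is also worth confirming at the outset that $\ast$ genuinely preserves $\calL$ (equivalently, that the group-ring involution $g \mapsto \ol g$, $\lambda \mapsto \lambda$ defines an automorphism of $k[N]$ extending to its field of fractions $\calL$), which is immediate since $N$ is abelian.
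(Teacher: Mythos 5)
Your proposal is correct and follows essentially the same route as the paper: note that $\ast$ fixes $\kappa_x,\kappa_y,\kappa_z$ (hence all of $\calK$) and sends $\xi \mapsto -\xi$, then write $z = p + q\xi$ in the $\calK$-basis $\{1,\xi\}$ of $\calZ$ so that $z^{\ast} = p - q\xi$, from which (i) and (ii) follow. The only difference is that you spell out the details the paper leaves as ``straightforward'' (the identification of $\ast|_{\calL}$ with $\iota_{xyz}$, the use of $\operatorname{char}(k)\neq 2$, $\xi\neq 0$, and $\xi^2\in\calK$), which is a faithful filling-in rather than a different argument.
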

\begin{proof}
The statement $\calZ^{\ast} = \calZ$ is clear. Note that $z^{\ast} = z$ for any $z \in \calK = k(\kappa_x,\kappa_y,\kappa_z)$ since $\kappa_x^{\ast} = \kappa_x, \kappa_y^{\ast}=\kappa_y$ and $\kappa_z^{\ast} = \kappa_z$.

Now given $z \in \calZ$, write it as $z = z_1 + \xi z_2$ for some $z_1,z_2 \in \calK$. Then $z^* = z_1 - \xi z_2$, and so the rest of the lemma follows straightforwardly.
\end{proof}

Finally we are in a position to prove Theorem \ref{theorem-criterion.unitary}.

\begin{proof}[Proof of Theorem \ref{theorem-criterion.unitary}]
This is a particular case of Theorem \ref{theorem-criterion.unit}, and the only thing which requires further study is the condition $T^{\ast} = -T$. We write
$$T = A \omega + C_1\gamma_1t + C_2\gamma_2t + C_3\gamma_3t + C_4\gamma_4t + D_1\delta_1u + D_2\delta_2u + D_3\delta_3u + D_4\delta_4u,$$
and we use the last columns of Tables \eqref{table-first.basis} and \eqref{table-second.basis}:
\begin{align*}
T^{\ast} & = A^{\ast} \omega^{\ast} + C_1^{\ast} (\gamma_1t)^{\ast} + C_2^{\ast} (\gamma_2t)^{\ast} + C_3^{\ast} (\gamma_3t)^{\ast} + C_4^{\ast} (\gamma_4t)^{\ast} + D_1^{\ast} (\delta_1u)^{\ast} + D_2^{\ast} (\delta_2u)^{\ast} + D_3^{\ast} (\delta_3u)^{\ast} + D_4^{\ast} (\delta_4u)^{\ast} \\
& = -A^{\ast} \omega - C_1^{\ast} \gamma_1t + C_2^{\ast} \gamma_2t + C_3^{\ast} \gamma_3t - C_4^{\ast} \gamma_4t - D_1^{\ast} \delta_1u + D_2^{\ast} \delta_2u + D_3^{\ast} \delta_3u - D_4^{\ast} \delta_4u
\end{align*}
Thus the condition $T^{\ast} = -T$ is equivalent, by Lemma \ref{lemma-involution.center}, to the conditions
$$A, C_1, C_2\xi, C_3\xi, C_4, D_1,D_2\xi,D_3\xi,D_4 \in \calK.$$
These are precisely the condition in the statement of Theorem \ref{theorem-criterion.unitary} which differs from the statement of Theorem \ref{theorem-criterion.unit}. The proof is complete.
\end{proof}

\section*{Acknowledgments}

The authors would like to thank Dawid Kielak for his helpful comments on the first part of the paper.

\bibliographystyle{plain}

\bibliography{bibliography}

\begin{thebibliography}{1}

\bibitem{AraGoodearl}
P.~Ara and K.~R. Goodearl.
\newblock The realization problem for some wild monoids and the {A}tiyah
  problem.
\newblock {\em Trans. Amer. Math. Soc.}, 369(8):5665--5710, 2017.

\bibitem{determinant}
D.~A. Craven and P.~Pappas.
\newblock On the unit conjecture for supersoluble group algebras.
\newblock {\em J. Algebra}, 394:310--356, 2013.

\bibitem{Tsen}
S.~Ding, M.-C. Kang, and E.-T. Tan.
\newblock Chiungtze {C}. {T}sen (1898--1940) and {T}sen's theorems.
\newblock {\em Rocky Mountain J. Math.}, 29(4):1237--1269, 1999.

\bibitem{CentralAlgebras}
B.~Farb and R.~K. Dennis.
\newblock {\em Noncommutative algebra}, volume 144 of {\em Graduate Texts in
  Mathematics}.
\newblock Springer-Verlag, New York, 1993.

\bibitem{Gardam}
G.~Gardam.
\newblock A counterexample to the unit conjecture for group rings.
\newblock {\em Ann. of Math. (2)}, 194(3):967--979, 2021.

\bibitem{Linnell88}
P.~H. Kropholler, P.~A. Linnell, and J.~A. Moody.
\newblock Applications of a new {$K$}-theoretic theorem to soluble group rings.
\newblock {\em Proc. Amer. Math. Soc.}, 104(3):675--684, 1988.

\bibitem{Luck}
W.~L\"{u}ck.
\newblock {\em {$L^2$}-invariants: theory and applications to geometry and
  {$K$}-theory}, volume~44 of {\em Ergebnisse der Mathematik und ihrer
  Grenzgebiete. 3. Folge. A Series of Modern Surveys in Mathematics [Results in
  Mathematics and Related Areas. 3rd Series. A Series of Modern Surveys in
  Mathematics]}.
\newblock Springer-Verlag, Berlin, 2002.

\bibitem{Murray}
A.~G. Murray.
\newblock More counterexamples to the unit conjecture for group rings.
\newblock {\em arXiv e-prints}, page arXiv:2106.02147, June 2021.

\bibitem{unboundedop}
Holger Reich.
\newblock {\em Group von Neumann Algebras and Related Algebras}.
\newblock PhD thesis, Georg-August-Universität zu Göttingen, 1998.

\end{thebibliography}

\end{document}